\newtheorem{theorem}{Theorem}[section]
\newtheorem{lemma}[theorem]{Lemma}
\newtheorem{proposition}[theorem]{Proposition}
\newtheorem{corollary}[theorem]{Corollary}
\theoremstyle{definition}
\newtheorem{example}[theorem]{Example}
\newtheorem{definition}[theorem]{Definition}
\newtheorem{open}[theorem]{Open Problem} 
\newtheorem{notation}[theorem]{Notation}
\newcommand{\iso}{Iso}
\newcommand{\aut}{Aut} 
\numberwithin{equation}{section}
\DeclareMathAlphabet{\mcal}{OMS}{cmsy}{m}{n}
\SetMathAlphabet{\mcal}{bold}{OMS}{cmsy}{b}{n}
\begin{document}

\title{$\aleph_0$-categoricity of semigroups II
}
\thanks{This work was funded by EPSRC as part of a PhD at the University of York, supervised by Prof. Victoria Gould.  The author has also received funding from  the Deutsche  Forschungsgemeinschaft (DFG, project number 622397)
and from the European Research Council (Grant Agreement no. 681988, CSP-Infinity).}
\author{Thomas Quinn-Gregson  
}

%


\maketitle

\begin{abstract}
  \noindent A countable semigroup is $\aleph_0$-categorical if it can be characterised, up to isomorphism,  by its first-order properties. In this paper we continue our investigation into the $\aleph_0$-categoricity of semigroups. Our main results are a complete classification of  $\aleph_0$-categorical orthodox completely 0-simple semigroups, and  descriptions of the $\aleph_0$-categorical members of certain classes of strong semilattices of semigroups. 
\keywords{$\aleph_0$-categorical \and Semigroups \and Rees matrix semigroups}
\end{abstract}

\section{Introduction} 
 
  A countable structure is \textit{$\aleph_0$-categorical} if it is uniquely determined by its first-order properties, up to isomorphism.  While the concept of $\aleph_0$-categoricity arises naturally from model theory,  it has a purely algebraic formulation thanks to the Ryll-Nardzewski Theorem (RNT).  Independently accredited to Engeler \cite{Engeler}, Ryll-Nardzewski \cite{Ryll} and Svenonius \cite{Svenonius}, it states that the $\aleph_0$-categoricity of a structure $M$ is equivalent to there being only finitely many orbits in the natural action of Aut($M$) (the automorphism group of $M$) on $M^n$, for each $n\geq 1$. 
   Significant results exist for both relational  and algebraic structures from the point of view
  of $\aleph_0$-categoricity, but, until recently, little was known in the context of semigroups. This article is the second of a pair  initiating and developing the 
  study of $\aleph_0$-categorical semigroups. For background and motivation we refer the reader to \cite{Hodges93} and \cite{Evans}, and to our first article \cite{Quinncat}.  
  
  We explore in \cite{Quinncat} the behaviour of $\aleph_0$-categoricity with respect to standard constructions, such as quotients and subsemigroups. For example, $\aleph_0$-categoricity of a semigroup is inherited by both its maximal subgroups and its principal factors. Differences with the known theory for groups and rings emerged, for example, any $\aleph_0$-categorical nil ring is nilpotent, but the same is not true for semigroups. 
  While keeping the machinery at a low level, we were able to give, amongst other results,  complete classifications of $\aleph_0$-categorical primitive inverse semigroups and of $E$-unitary inverse semigroups with finite semilattices of idempotents. 

For the work in this current article, it is helpful to develop some general strategies and then apply them in various contexts. In view of this, in Section \ref{basic cat}, we introduce $\aleph_0$-categoricity in the setting of (first-order) structures. Although we will mostly be working in the context of semigroups, this broader view will be useful for studying  structures, such as graphs and semilattices, which  naturally arise in our considerations of semigroups.  Key results from \cite{Quinncat} are given in this  setting.
In particular, we formalise the previously defined concept of \textit{$\aleph_0$-categoricity over a set of subsets}; the $\aleph_0$-categoricity of rectangular bands over any set of subrectangular bands acts as a useful example. 

 In Section \ref{new method}  we construct a handy method for dealing with the $\aleph_0$-categoricity of semigroups in which their automorphisms can be built from certain ingredients. This is then used in Section \ref{sec strong} to study the $\aleph_0$-categoricity of strong semilattices of semigroups. The main results of this article are in Section \ref{Sec rees}, where we continue from \cite{Quinncat} our study into the $\aleph_0$-categoricity of completely 0-simple semigroups. We follow a method of Graham and Houghton by considering graphs arising from Rees matrix semigroups, which necessitated our  study of  $\aleph_0$-categoricity in the general setting of structures. 

We   assume that all structures considered will be of countable cardinality.

\section{The $\aleph_0$-categoricity of a structure} 
\label{basic cat}

We begin by translating a number of results in \cite{Quinncat} to the  general setting of (first-order) structures. Their proofs easily generalize, and as such we shall omit
them, referencing only the corresonding result in \cite{Quinncat}. 

 A (first-order) structure is a set $M$ together with a collection of constants $\mathfrak{C}$, finitary relations $\mathfrak{R}$, and finitary functions $\mathfrak{F}$ defined on $M$. 
  We denote the structure as $(M;\mathfrak{R},\mathfrak{F},\mathfrak{C})$, or simply $M$ where no confusion may arise. Each constant element is associated with a constant symbol, each $n$-ary relation is associated with an $n$-ary relational symbol, and each $n$-ary function is associated with an $n$-ary function symbol. The collection $L$ of these symbols is called the \textit{signature of $M$}. We follow the usual convention of not distinguishing between the constants/relations/functions of $M$, and their
  corresponding abstract symbols in $L$.

Our main example is that of  a semigroup $(S,\cdot)$, where $S$  is a set  together with a single (associative) binary operation $\cdot\,$, and so the associated signature  consists of a single binary function symbol. 
  
A property of a structure is  \textit{first-order} if it can be formulated within first-order predicate calculus.  A (countable) structure is \textit{$\aleph_0$-categorical} if it can be uniquely classified by its first-order properties, up to isomorphism.

 The central result in the study of $\aleph_0$-categorical structures is the  Ryll-Nardzewski Theorem, which translates the concept to the study of oligomorphic automorphism groups (see \cite{Hodges93}). Before stating it, it is worth fixing some notation and definitions. Let $\phi\colon A\rightarrow B$ be a map,  let $\overline{a}=(a_1,\dots,a_n)$  be an $n$-tuple  of $A$ and let $M\subseteq A$. Then we let $\overline{a}\phi$ denote the $n$-tuple of $B$ given by $(a_1\phi,\dots,a_n\phi)$, and $M\phi$ denotes the subset $\{m\phi: m\in M\}$ of $B$. 

Given a structure $M$, we say that a pair of $n$-tuples $\overline{a}=(a_1, \dots, a_n)$ and $\overline{b}=(b_1, \dots ,b_n)$ of $M$ are  \textit{automorphically equivalent} or
{\em belong to the same $n$-automorphism type} if there exists an automorphism $\phi$ of $M$ such that  $\overline{a}\phi=\overline{b}$, that is, $a_i\phi = b_i$ for each $i
\in \{ 1,\hdots, n\}$. We denote this equivalence relation as $\overline{a} \, \sim_{M,n} \, \overline{b}$. We call Aut($M$) \textit{oligomorphic} if Aut($M$) has only finitely many orbits in its action on $M^n$ for each $n\geq 1$, that is, if each $|M^n/\sim_{M,n}|$ is finite. 

\begin{theorem}[The Ryll-Nardzewski Theorem (RNT)] 
A structure $M$ is $\aleph_0$-categorical if and only if Aut($M$) is oligomorphic. 
\end{theorem}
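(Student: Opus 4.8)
The plan is to prove both implications via the equivalent characterisation in terms of counting complete types. Write $T=\mathrm{Th}(M)$ and, for each $n\geq 1$, let $S_n$ be the Stone space of complete $n$-types of $T$ over $\emptyset$; recall that $S_n$ is compact and Hausdorff. The ingredients I will draw on are the Compactness and (downward) L\"owenheim--Skolem Theorems, the Omitting Types Theorem, the existence of a countable $\omega$-saturated model when all type spaces over finite parameter sets are countable, and the back-and-forth construction of isomorphisms between countable atomic models.

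First I would handle the implication ``$\mathrm{Aut}(M)$ oligomorphic $\Rightarrow$ $M$ is $\aleph_0$-categorical''. Every $\emptyset$-definable subset of $M^n$ is invariant under $\mathrm{Aut}(M)$, hence is a union of the finitely many $\sim_{M,n}$-classes; so $M^n$ has only finitely many $\emptyset$-definable subsets, i.e.\ only finitely many formulas $\varphi(x_1,\dots,x_n)$ up to equivalence in $M$. If two formulas have the same solution set in $M$, then the sentence asserting their equivalence lies in $T$, so they are equivalent modulo $T$; thus for each $n$ there are finitely many formulas in $n$ free variables up to $T$-equivalence. Since a complete $n$-type is pinned down by which of these classes it contains, each $S_n$ is finite, hence discrete (a finite $T_1$ space), so every $n$-type of $T$ is isolated. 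A standard back-and-forth argument then shows any two countable models of $T$ are isomorphic: both are atomic, and a finite partial elementary map is extended by using the formula isolating the type of the enlarged tuple to locate a matching element in the other model.

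For the converse, ``$M$ $\aleph_0$-categorical $\Rightarrow$ $\mathrm{Aut}(M)$ oligomorphic'', I would first show each $S_n$ is finite. If some $S_n$ were infinite it would, being compact Hausdorff, contain a non-isolated type $p$; the Omitting Types Theorem yields a countable model of $T$ omitting $p$, while L\"owenheim--Skolem applied to a model of $T\cup p(\bar c)$ yields a countable model realising $p$. These are non-isomorphic, contradicting $\aleph_0$-categoricity. Adjoining finitely many constants only shrinks type spaces, so the same holds in every finite expansion; hence $M$ is isomorphic to a countable $\omega$-saturated model, and therefore $\omega$-homogeneous. Consequently two $n$-tuples of $M$ are automorphically equivalent precisely when they realise the same type, and since every type in $S_n$ is realised in $M$ and $|S_n|$ is finite, $\mathrm{Aut}(M)$ has finitely many orbits on $M^n$.

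I expect the main obstacle to be not the permutation-group bookkeeping, which is routine, but the model-theoretic input: genuinely establishing the Omitting Types Theorem and the existence of a countable $\omega$-saturated model (both Henkin-style constructions), and verifying that $\omega$-saturation forces the $\omega$-homogeneity needed to identify $\mathrm{Aut}(M)$-orbits with type-classes. As the text indicates, in practice one simply cites \cite{Hodges93} for these classical facts.
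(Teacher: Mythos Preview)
Your sketch is a correct outline of the classical proof of the Ryll--Nardzewski Theorem, but there is nothing in the paper to compare it against: the paper does not prove this statement at all. It is simply quoted as a known result, with the parenthetical reference ``(see \cite{Hodges93})'' immediately preceding the theorem environment. This is entirely standard practice; the RNT is foundational background material for the paper, not one of its contributions.

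One small remark on your write-up: the phrase ``adjoining finitely many constants only shrinks type spaces'' is not quite what you mean. Adding constants $a_1,\dots,a_k$ to the language does not shrink $S_n$; rather, the space of $n$-types over $\{a_1,\dots,a_k\}$ is identified with the fibre of $S_{n+k}(T)\to S_k(T)$ above $\mathrm{tp}(a_1,\dots,a_k)$, hence is finite because $S_{n+k}(T)$ is. With that wording fixed, your argument is the standard one and would be acceptable as a self-contained proof, though for the purposes of this paper a citation suffices.
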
  

It follows from the RNT that every $\aleph_0$-categorical structure is \textit{uniformly locally finite} \cite[Corollary 7.3.2]{Hodges93}, that is, there is a finite uniform bound on the size of the $n$-generated substructures, for each $n\geq 1$. In particular, an $\aleph_0$-categorical semigroup is periodic, with bounded index and period. 

Another immediate consequence of the RNT is that any characteristic substructure  inherits $\aleph_0$-categoricity, where a subset/substructure is called \textit{characteristic} if it is invariant under automorphisms of the structure.
 However, key subsemigroups of a semigroup such as maximal subgroups and principal ideals are not necessarily characteristic, and a more general definition is required: 
 
 \begin{definition}\label{fprc}   Let $M$ be a structure and, for some fixed $t\in \mathbb{N}$, let  $\{\overline{X}_i:i\in I\}$ be a collection of $t$-tuples of $M$.
  Let $\{A_i:i\in I\}$ be a collection of subsets of $M$ with the property that for any automorphism $\phi$ of $M$ such that there exists $i,j\in I$ with $\overline{X}_i \phi = \overline{X}_j$, then $\phi|_{A_i}$ is a bijection from $A_i$ onto $A_j$.
   Then we call $\mcal{A}=\{(A_i,\overline{X}_i):i\in I\}$ a \textit{system of $t$-pivoted pairwise relatively characteristic ($t$-pivoted p.r.c.) subsets} (or, substructure,
if each $A_i$ is a substructure) of $M$.
 The $t$-tuple $\overline{X}_i$ is called the \textit{pivot} of $A_i$ ($i\in I$).
   If $|I|=1$ then, letting $A_1=A$ and $\overline{X}_1=\overline{X}$,  we write $\{(A, X)\}$ simply as $(A, X)$, and call $A$ an \textit{$\overline{X}$-pivoted relatively characteristic ($\overline{X}$-pivoted r.c.) subset/substructure} of $M$. 
 \end{definition} 

In \cite{Quinncat}, Definition~\ref{fprc} was shown to be of use in regard to, for example,  Green's relations.
In particular, $\{(H_e,e):e\in E(S)\}$ forms a system of 1-pivoted p.r.c. subgroups of  a semigroup $S$.
 It then followed from the proposition below  that maximal subgroups inherit $\aleph_0$-categoricity, and moreover there exists only finitely many non-isomorphic maximal subgroups in an $\aleph_0$-categorical semigroup.  

\begin{proposition}\cite[Proposition 3.3]{Quinncat} \label{rel-char cat} Let $M$ be an $\aleph_0$-categorical structure and  $\{(A_i,\overline{X}_i):i\in I\}$ a system of $t$-pivoted p.r.c. subsets of $M$.
Then $\{|A_i| : i \in I\}$ is finite.
 If, further, each $A_i$ forms a substructure of $M$, then $\{A_i: i \in I\}$ is finite, up to isomorphism, with each $A_i$ $\aleph_0$-categorical. 
\end{proposition}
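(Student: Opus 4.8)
The plan is to derive both assertions from the Ryll--Nardzewski Theorem, using the fact that the action of an automorphism of $M$ on each $A_i$ is entirely governed by its action on the pivot $\underline{X}_i$. Since $M$ is $\aleph_0$-categorical, $\aut(M)$ is oligomorphic by the RNT, so $M^t/{\sim_{M,t}}$ is finite; say it has $k$ elements. The pivots $\{\underline{X}_i : i\in I\}$ therefore lie in at most $k$ of these $t$-automorphism types, and this induces a partition of $I$ into at most $k$ blocks, where $i$ and $j$ lie in the same block precisely when $\underline{X}_i \sim_{M,t} \underline{X}_j$. The key observation for the first assertion is that if $\underline{X}_i \sim_{M,t} \underline{X}_j$, witnessed by $\phi\in\aut(M)$ with $\underline{X}_i\phi = \underline{X}_j$, then by the defining property of a system of $t$-pivoted p.r.c.\ subsets, $\phi|_{A_i}$ is a bijection from $A_i$ onto $A_j$, so $|A_i| = |A_j|$. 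Hence $i\mapsto |A_i|$ is constant on each block, and $\{|A_i| : i\in I\}$ has at most $k$ elements.

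For the second assertion, assume in addition that each $A_i$ is a substructure of $M$. Keeping the notation above, when $\underline{X}_i \sim_{M,t} \underline{X}_j$ via $\phi$, the restriction $\phi|_{A_i}\colon A_i\to A_j$ is not merely a bijection of underlying sets but an isomorphism of the induced substructures: $\phi$ respects all relations, functions and constants of $M$, and since $A_i$ and $A_j$ are closed under the functions and contain the relevant constants, this is inherited by the restriction. Thus $A_i\cong A_j$ whenever $i$ and $j$ lie in the same block, so $\{A_i : i\in I\}$ is finite up to isomorphism, with at most $k$ isomorphism classes.

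It remains to show each $A_i$ is $\aleph_0$-categorical. Being a subset of the countable set $M$, $A_i$ is countable, so by the RNT it suffices to prove $\aut(A_i)$ is oligomorphic. Fix $n\geq 1$ and take $\underline{a},\underline{b}\in A_i^n$ with $(\underline{X}_i,\underline{a}) \sim_{M,t+n} (\underline{X}_i,\underline{b})$, say via $\psi\in\aut(M)$. Then $\underline{X}_i\psi = \underline{X}_i$, so applying the p.r.c.\ condition with $j=i$ shows $\psi|_{A_i}$ is a bijection of $A_i$ onto itself, and, exactly as in the previous paragraph, it is an automorphism of the substructure $A_i$; moreover $\underline{a}\psi = \underline{b}$ with $\underline{a},\underline{b}\in A_i^n$, so $\underline{a} \sim_{A_i,n} \underline{b}$. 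Consequently the number of $n$-automorphism types of $A_i$ is at most the number of $(t+n)$-automorphism types of $M$, which is finite since $M$ is $\aleph_0$-categorical. Hence $\aut(A_i)$ is oligomorphic and $A_i$ is $\aleph_0$-categorical by the RNT.

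Each step is routine once the RNT is available; the point needing the most care is the verification that $\phi|_{A_i}$ and $\psi|_{A_i}$ are genuine isomorphisms (respectively automorphisms) of the \emph{induced} substructures rather than just bijections of sets. This is precisely where the hypothesis that each $A_i$ is a substructure enters — it is what makes the induced relations, functions and constants meaningful and guarantees they are preserved by restriction — and it explains why the first assertion requires no such hypothesis while the second does.
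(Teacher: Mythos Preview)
Your proof is correct and follows precisely the standard argument that the paper has in mind: this proposition is stated without proof here, being imported verbatim from \cite[Proposition 3.3]{Quinncat}, and your argument---partitioning $I$ by the $t$-automorphism type of the pivots, using the p.r.c.\ condition to transport cardinality and isomorphism type across each block, and bounding $|A_i^n/{\sim_{A_i,n}}|$ by $|M^{t+n}/{\sim_{M,t+n}}|$ via automorphisms fixing $\underline{X}_i$---is exactly the intended one.
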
 

We  use the RNT in conjunction with \cite[Lemma 2.8]{Quinncat} to prove that a structure $M$ is $\aleph_0$-categorical in the following way. For each $n\in \mathbb{N}$, let $\gamma_1,\dots,\gamma_r$ be a finite list of equivalence relations on $M^n$ such that $M^n/\gamma_i$ is finite for each $1\leq i \leq r$ and 
\[ \gamma_1\cap \gamma_2 \cap \cdots \cap \gamma_r \subseteq \, \sim_{M,n}. 
\] 
 A consequence of the two aforementioned results is that $M$ is $\aleph_0$-categorical. 
 This result will often be drawn upon in a less formal way as follows. Suppose that we have an equivalence relation $\sigma$ on $M^n$ that arises from different ways in which a given condition may be fulfilled; if $M^n/\sigma$ is finite, then we say the condition has {\em finitely many choices}.
  
\begin{example} Recalling  \cite[Example 2.10]{Quinncat}, consider the equivalence $\natural_{X,n}$ on $n$-tuples of a set $X$ given  by
\begin{equation} \label{natural n}   (a_1,\dots,a_n) \, \natural_{X,n} \, (b_1,\dots,b_n) \text{ if and only if } [a_i=a_j \Leftrightarrow b_i=b_j, \text{ for each } i,j].
\end{equation} 
A pair of $n$-tuples $\overline{a}$ and $\overline{b}$ are $\natural_{X,n}$-equivalent if and only if there exists a bijection $\phi\colon\{a_1,\dots,a_n\} \rightarrow \{b_1,\dots,b_n\}$ such that $a_i\phi=b_i$, and the number of $\natural_{X,n}$-classes of $X^n$ is  finite, for each $n\in \mathbb{N}$.
 Note also that if $M$ is a structure then any pair of $n$-automorphically equivalent tuples are clearly $\natural_{M,n}$-equivalent. 
\end{example} 

Let $M$ be a structure and $\mcal{A}=\{A_i:i\in I\}$ a collection of subsets of $M$. We may extend the signature of $M$ to include the unary relations $A_i$ ($i\in I$). 
We denote the resulting structure as  $\underline{M}=(M;\mcal{A})$, which we call a \textit{set extension of $M$}.
 If $\mcal{A}=\{A_1,\dots,A_n\}$ is finite, then we may simply write $\underline{M}$ as $(M;A_1,\dots,A_n)$. 

 Notice that automorphisms of $\underline{M}$ are simply those automorphisms of $M$ which fix each $A_i$ setwise, that is automorphisms $\phi$ such that $A_i\phi=A_i$ ($i\in I$).
  The set of all such automorphisms will be denoted Aut($M;\mcal{A}$), and clearly forms a subgroup of Aut($M$).
  The $\aleph_0$-categoricity of $\underline{M}$ is therefore equivalent to our previous notion of $M$ being  \textit{$\aleph_0$-categorical over $\mcal{A}$} in \cite{Quinncat}. 

\begin{lemma} \cite[Lemma 5.2]{Quinncat} Let $M$ be a structure with a system of $t$-pivoted p.r.c. subsets $\{(A_i,\overline{X}_i):i\in I\}$. 
Then $(M;\{A_i:i\in I\})$ is $\aleph_0$-categorical if and only if $M$ is $\aleph_0$-categorical and $I$ is finite.
\end{lemma}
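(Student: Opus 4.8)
The statement I want to prove is: given a structure $M$ with a system $\{(A_i,\underline{X}_i):i\in I\}$ of $t$-pivoted p.r.c. subsets, the set extension $(M;\{A_i:i\in I\})$ is $\aleph_0$-categorical if and only if $M$ is $\aleph_0$-categorical and $I$ is finite.

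The plan is to argue each direction separately, using the RNT throughout and the characterisation of $\aleph_0$-categoricity via finitely many choices. For the forward direction, suppose $\bar M=(M;\{A_i:i\in I\})$ is $\aleph_0$-categorical. Since every automorphism of $M$ restricts... no — rather, $\mathrm{Aut}(\bar M)\le \mathrm{Aut}(M)$, so oligomorphicity of $\mathrm{Aut}(\bar M)$ does not immediately transfer to $\mathrm{Aut}(M)$. Instead I would note that $M$ itself, as a reduct of $\bar M$ (obtained by forgetting the unary predicates), is interpretable in $\bar M$ with no parameters; $\aleph_0$-categoricity passes to reducts, so $M$ is $\aleph_0$-categorical. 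Finiteness of $I$ is where the pivots do the work: by Proposition~\ref{rel-char cat} applied to $M$ — which we have just shown is $\aleph_0$-categorical — the family $\{A_i:i\in I\}$ is finite. Strictly, Proposition~\ref{rel-char cat} gives finiteness up to isomorphism of the substructure case, and finiteness of $\{|A_i|\}$ in general; to get $|I|$ finite I would instead observe that the $A_i$ are pairwise relatively characteristic, so if $\underline{X}_i\sim_{M,t}\underline{X}_j$ then $|A_i|=|A_j|$ and more importantly the pair $(\underline{X}_i, A_i)$ determines $(\underline{X}_j,A_j)$ up to the automorphism; since $M$ has only finitely many $t$-automorphism types of tuples (RNT), the distinct $A_i$ are indexed by a finite set, hence $I$ may be taken finite without changing $\bar M$. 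Here I should be slightly careful about whether the indexing is genuinely finite or merely finite after identifying repeated pairs; I will phrase the conclusion as "$\{A_i:i\in I\}$ is a finite set", which is what the set extension actually depends on, and this is exactly the hypothesis needed for the converse.

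For the converse, suppose $M$ is $\aleph_0$-categorical and $\{A_i:i\in I\}$ is finite; write it as $\{B_1,\dots,B_k\}$. Then $\bar M=(M;B_1,\dots,B_k)$ is obtained from $M$ by adjoining finitely many unary predicates. I would now show directly, via the finitely-many-choices criterion recalled before the statement, that $\bar M$ is $\aleph_0$-categorical: fix $n$, and for an $n$-tuple $\underline a$ of $\bar M$ record, in addition to its $\sim_{M,n}$-type, the "membership pattern" $\{(i,\ell): a_i\in B_\ell\}\subseteq\{1,\dots,n\}\times\{1,\dots,k\}$. Two tuples with the same $\sim_{M,n}$-type and the same membership pattern are related by an automorphism $\phi$ of $M$ with $\underline a\phi=\underline b$; but such $\phi$ need not fix each $B_\ell$ setwise, so it need not be an automorphism of $\bar M$. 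This is the main obstacle, and it is exactly why the pivots are in the hypothesis rather than just a bare finiteness assumption: I need to upgrade $\phi$ to an automorphism of $\bar M$. To do this I enrich the data still further — for each $B_\ell$ with pivot $\underline{X}_\ell$ (using that after the reindexing each $B_\ell$ carries a pivot), I also record the $\sim_{M,\,n+kt}$-type of the concatenated tuple $\underline a\,{}^\frown\underline{X}_1{}^\frown\cdots{}^\frown\underline{X}_k$. There are still only finitely many such types by the RNT applied to $M$. Now if $\underline a$ and $\underline b$ agree on this enriched type, the witnessing automorphism $\phi$ of $M$ sends each $\underline{X}_\ell$ to $\underline{X}_\ell$, whence by the p.r.c. property $\phi|_{B_\ell}$ is a bijection of $B_\ell$ onto itself, i.e. $\phi\in\mathrm{Aut}(\bar M)$ and $\underline a\phi=\underline b$. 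Hence finitely many choices control the $n$-automorphism type of $\bar M$, and $\bar M$ is $\aleph_0$-categorical by \cite[Lemma 2.8]{Quinncat}.

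The delicate point throughout is the interplay between "$I$ finite" as an index set and "$\{A_i:i\in I\}$ finite" as a set of subsets, together with the fact that in the converse direction each of the finitely many distinct $A_i$ comes equipped with a pivot that can be appended to tuples — I should make explicit at the start of the converse that, since the system is $t$-pivoted p.r.c., for each member $B$ of the finite family $\{A_i:i\in I\}$ we may fix one pivot $\underline X_B$, and it is these finitely many pivots that get concatenated. Once that bookkeeping is set up, both directions are short: forward uses reducts plus the RNT plus the p.r.c. property to bound the number of distinct $A_i$; backward uses the RNT for $M$ to supply finitely many choices for the enriched type, and the p.r.c. property to convert a type-preserving automorphism of $M$ into one of $\bar M$.
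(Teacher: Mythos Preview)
The paper does not give a proof of this lemma; it is quoted from \cite{Quinncat} with the remark that the proofs ``easily generalize'' and are omitted. So there is no paper proof to compare against directly. Your converse direction is correct and is essentially the standard argument (indeed it is the content of the next cited result, Lemma~\ref{over char 2}): append the finitely many pivots to each tuple and use the p.r.c.\ property to upgrade a witnessing automorphism of $M$ to one of $\bar M$.

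Your forward direction, however, has a genuine gap in the finiteness step. You correctly deduce that $M$ is $\aleph_0$-categorical as a reduct of $\bar M$, and then try to bound $|\{A_i:i\in I\}|$ using that $M$ has only finitely many $t$-automorphism types. But from $\underline{X}_i \sim_{M,t} \underline{X}_j$ via some $\phi\in\mathrm{Aut}(M)$, the p.r.c.\ property only gives $A_i\phi = A_j$, not $A_i = A_j$; distinct $A_i$'s can certainly have pivots of the same $M$-type. Concretely, take $M$ to be a countably infinite pure set, $A_i=\{i\}$ and $\underline{X}_i=(i)$ for each $i\in M$. This is a system of $1$-pivoted p.r.c.\ subsets, $M$ is $\aleph_0$-categorical, and all pivots lie in a single $M$-automorphism type, yet $\{A_i\}$ is infinite. (Here $\bar M$ has trivial automorphism group and is not $\aleph_0$-categorical, so the lemma itself is fine; it is your argument, which only used the $\aleph_0$-categoricity of $M$, that fails.)

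The fix is to work with $\bar M$-automorphism types rather than $M$-automorphism types. If $\underline{X}_i \sim_{\bar M,t} \underline{X}_j$ via $\phi\in\mathrm{Aut}(\bar M)$, then on the one hand $\phi\in\mathrm{Aut}(M)$, so $A_i\phi=A_j$ by the p.r.c.\ property; on the other hand $\phi\in\mathrm{Aut}(\bar M)$ forces $A_i\phi=A_i$. Hence $A_i=A_j$. Since $\bar M$ is $\aleph_0$-categorical there are only finitely many $\sim_{\bar M,t}$-classes among the pivots, and therefore only finitely many distinct $A_i$.
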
 

\begin{lemma}\cite[Lemma 5.3]{Quinncat} \label{over char 2} Let $M$ be a structure, let $t,r\in\mathbb{N}$, and for each $k\in \{1,\dots,r\}$ let $\overline{X}_k\in M^t$. Suppose also that   $A_k$ is an $\overline{X}_k$-pivoted relatively characteristic subset of $M$ for $1\leq k \leq r$. 
  Then $(M;A_1,\dots,A_r)$ is $\aleph_0$-categorical if and only if $M$ is $\aleph_0$-categorical. 
\end{lemma}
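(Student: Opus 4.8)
The plan is to regard $(M;A_1,\dots,A_r)$ as a set extension $N$ of $M$ and to use the fact that the automorphisms of a set extension are precisely the automorphisms of $M$ fixing each adjoined set setwise, so that $\aut(N)=\aut(M;A_1,\dots,A_r)\leq\aut(M)$. The two implications have quite different characters, and I would prove them separately.

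For the implication that $\aleph_0$-categoricity of $N$ forces $\aleph_0$-categoricity of $M$, I would simply count orbits. Since $\aut(N)\leq\aut(M)$, any two tuples that are automorphically equivalent in $N$ are automorphically equivalent in $M$, so each $\sim_{M,n}$-class of $M^n$ is a union of $\sim_{N,n}$-classes, and hence $M^n$ has at most as many $n$-automorphism types over $M$ as it has over $N$. If $N$ is $\aleph_0$-categorical the latter is finite for every $n$ by the RNT, so the former is too, and $M$ is $\aleph_0$-categorical. This direction uses nothing about the sets $A_k$ beyond their being adjoined as relations: it is the general fact that $\aleph_0$-categoricity passes to reducts on the same domain.

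The content is in the converse, where I would apply the strategy recalled in Section~\ref{basic cat} (the RNT together with \cite[Lemma 2.8]{Quinncat}). Assume $M$ is $\aleph_0$-categorical. Fix $n\in\mathbb{N}$ and, for $\underline a\in M^n$, let $\underline a^{+}:=(\underline a,\underline X_1,\dots,\underline X_r)\in M^{n+rt}$ be obtained by appending \emph{all} the pivots; define $\underline a\approx\underline b$ to mean $\underline a^{+}\sim_{M,n+rt}\underline b^{+}$. This is an equivalence relation on $M^n$, and it has only finitely many classes because $M^{n+rt}$ has only finitely many $(n+rt)$-automorphism types. The key step — the only place the hypothesis is used — is to check that $\approx\,\subseteq\,\sim_{N,n}$: if $\phi\in\aut(M)$ witnesses $\underline a^{+}\phi=\underline b^{+}$, then comparing the appended coordinates gives $\underline X_k\phi=\underline X_k$ for each $k$, so by the definition of an $\underline X_k$-pivoted relatively characteristic subset $\phi$ restricts to a bijection of $A_k$ onto itself; hence $\phi\in\aut(M;A_1,\dots,A_r)=\aut(N)$, and since also $\underline a\phi=\underline b$ we obtain $\underline a\sim_{N,n}\underline b$. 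Having produced, for each $n$, a single equivalence relation on $N^n=M^n$ with finitely many classes contained in $\sim_{N,n}$, the quoted method yields that $N$ is $\aleph_0$-categorical.

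I do not expect a real obstacle. The one point that needs care is to append all $r$ pivots \emph{at once}, so that a single witnessing automorphism is forced to respect every $A_k$ simultaneously — one works with the $(n+rt)$-automorphism type rather than treating the pivots one at a time. (Equivalently, one could induct on $r$, adjoining the $A_k$ one at a time and observing that $A_k$ stays $\underline X_k$-pivoted relatively characteristic with respect to the enlarged signature; but the direct argument seems cleaner.)
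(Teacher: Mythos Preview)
Your argument is correct. The paper does not actually supply a proof of this lemma: as noted at the start of Section~\ref{basic cat}, the results carried over from \cite{Quinncat} have proofs that ``easily generalize'' and are therefore omitted, with only the corresponding reference given. Your approach --- passing to the reduct for one direction, and for the other appending the pivots $\underline{X}_1,\dots,\underline{X}_r$ to a given $n$-tuple so that any witnessing automorphism is forced to fix each $A_k$ setwise --- is exactly the standard one and is the argument implicit in the cited result; there is nothing to add.
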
 

Consequently, if $S$ is an $\aleph_0$-categorical semigroup and $G_1,\dots, G_n$ is a collection of maximal subgroups of $S$ then $(S;G_1,\dots,G_n)$ is $\aleph_0$-categorical. 

However, note that not every $\aleph_0$-categorical set extension of a semigroup requires the subsets to be relatively characteristic. We claim that any set extension of a rectangular band by a finite set of subrectangular bands is $\aleph_0$-categorical. This result is of particular use in the next section when considering the $\aleph_0$-categoricity of  normal bands. 

Recall that every rectangular band can be written as a direct product of a left zero and right zero semigroup.
 The following  isomorphism theorem for rectangular bands will be vital for proving our claim, and follows immediately from \cite[Corollary 4.4.3]{Howie94}: 

\begin{lemma}\label{rb iso} Let $B_1=L_1\times R_1$ and $B_2=L_2\times R_2$ be a pair of rectangular bands. If $\phi_L\colon L_1\rightarrow L_2$ and $\phi_R\colon R_1\times R_2$ are a pair of bijections, then the map $\phi\colon B_1\rightarrow B_2$ given by $(l,r)\phi=(l\phi_L,r\phi_R)$ is an isomorphism, denoted $\phi=\phi_L\times \phi_R$.
 Conversely, every isomorphism can be constructed this way. 
\end{lemma}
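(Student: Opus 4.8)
The plan is to prove the two directions separately, with the converse carrying the bulk of the argument. Throughout I use the multiplication on a rectangular band $L\times R$, namely $(l_1,r_1)(l_2,r_2)=(l_1,r_2)$, which keeps the first coordinate of the left factor and the second coordinate of the right factor. The forward direction is routine: given bijections $\phi_L:L_1\rightarrow L_2$ and $\phi_R:R_1\rightarrow R_2$ and setting $(l,r)\phi=(l\phi_L,r\phi_R)$, the map $\phi$ is a bijection (with inverse $\phi_L^{-1}\times\phi_R^{-1}$), and a direct computation shows that both $((l_1,r_1)(l_2,r_2))\phi$ and $\big((l_1,r_1)\phi\big)\big((l_2,r_2)\phi\big)$ equal $(l_1\phi_L,r_2\phi_R)$, so $\phi$ is an isomorphism.

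For the converse, the key point is to recover the coordinate decomposition \emph{intrinsically}, so that any isomorphism is forced to respect it. I would first record that on a rectangular band Green's relations admit an equational description: for $a,b\in L\times R$ one has $a\,\mathcal{R}\,b \iff ab=b$ and $a\,\mathcal{L}\,b \iff ab=a$. Consequently the $\mathcal{R}$-classes are exactly the columns $\{l\}\times R$ (indexed by $l\in L$), the $\mathcal{L}$-classes are exactly the rows $L\times\{r\}$ (indexed by $r\in R$), and each $\mathcal{H}$-class $\mathcal{R}\cap\mathcal{L}$ is a single element. Since these relations are defined purely from the operation, any isomorphism $\psi:B_1\rightarrow B_2$ preserves them, and therefore induces a bijection from the set of $\mathcal{R}$-classes of $B_1$ to those of $B_2$, and likewise for $\mathcal{L}$-classes.

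I would then transport these bijections to the index sets. Define $\phi_L:L_1\rightarrow L_2$ by letting $l\phi_L$ be the unique element of $L_2$ with $(\{l\}\times R_1)\psi=\{l\phi_L\}\times R_2$, and dually define $\phi_R:R_1\rightarrow R_2$ using the $\mathcal{L}$-classes; both are bijections because $\psi$ is. Finally, for $(l,r)\in B_1$ the image $(l,r)\psi$ lies simultaneously in the column $\{l\phi_L\}\times R_2$ and in the row $L_2\times\{r\phi_R\}$, and since these intersect in the single point $(l\phi_L,r\phi_R)$ (triviality of $\mathcal{H}$), we conclude $(l,r)\psi=(l\phi_L,r\phi_R)$, that is $\psi=\phi_L\times\phi_R$. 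The main obstacle is precisely this converse: the content lies in showing that the row/column structure is determined by the semigroup operation alone, which the equational description of $\mathcal{R}$ and $\mathcal{L}$ supplies, and in pinning down the image of each individual element through the triviality of $\mathcal{H}$, so that the two induced maps on the index sets reassemble into the required product $\phi_L\times\phi_R$.
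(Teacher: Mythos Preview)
Your argument is correct. The paper does not supply its own proof of this lemma; it simply records that the statement follows immediately from \cite[Corollary 4.4.3]{Howie94}, so there is no in-paper argument to compare against. Your route via the equational description of $\mathcal{R}$ and $\mathcal{L}$ on a rectangular band, together with the triviality of $\mathcal{H}$, is exactly the standard one underlying the cited reference, so nothing further is needed.
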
 

\begin{theorem}\label{RB cat} If $B$ is a rectangular band and $B_1,\dots,B_r$ is a finite list of subrectangular bands of $B$, then $\underline{B}=(B;B_1,\dots,B_r)$ is $\aleph_0$-categorical.
 In particular, a rectangular band is $\aleph_0$-categorical. 
\end{theorem}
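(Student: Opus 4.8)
The plan is to verify, via the Ryll--Nardzewski Theorem, that $\aut(\bar B)$ is oligomorphic. Since every rectangular band is a direct product of a left zero semigroup and a right zero semigroup, I would fix such a decomposition and identify $B$ with $L\times R$, where $L$ is left zero and $R$ is right zero. The first task is then to describe the subrectangular bands $B_k$ and the automorphisms of $\bar B$ concretely. For a subsemigroup $T\leq L\times R$ and $(l_1,r_1),(l_2,r_2)\in T$ we have $(l_1,r_2)=(l_1,r_1)(l_2,r_2)\in T$, so $T=\pi_L(T)\times\pi_R(T)$; in particular each $B_k=L_k\times R_k$ with $L_k=\pi_L(B_k)\subseteq L$ and $R_k=\pi_R(B_k)\subseteq R$ nonempty. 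By Lemma~\ref{rb iso} the automorphisms of $B$ are exactly the maps $\phi_L\times\phi_R$, and since $(\phi_L\times\phi_R)(L_k\times R_k)=\phi_L(L_k)\times\phi_R(R_k)$, such a map fixes $B_k$ setwise if and only if $\phi_L(L_k)=L_k$ and $\phi_R(R_k)=R_k$ (using $L_k,R_k\neq\emptyset$). Hence $\aut(\bar B)$ is precisely the set of $\phi_L\times\phi_R$ where $\phi_L$ is a bijection of $L$ fixing every $L_k$ setwise and $\phi_R$ is a bijection of $R$ fixing every $R_k$ setwise.

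The second step reduces the orbit count on $B^n$ to the ``coordinate'' structures $\mathcal L=(L;L_1,\dots,L_r)$ and $\mathcal R=(R;R_1,\dots,R_r)$, where $L$ is regarded as a left zero semigroup and $R$ as a right zero semigroup. Writing an $n$-tuple of $B$ as $\underline b=\big((l_1,r_1),\dots,(l_n,r_n)\big)$ and setting $\underline l=(l_1,\dots,l_n)$, $\underline r=(r_1,\dots,r_n)$, the description of $\aut(\bar B)$ just obtained shows immediately that $\underline b\sim_{\bar B,n}\underline b'$ if and only if $\underline l\sim_{\mathcal L,n}\underline l'$ and $\underline r\sim_{\mathcal R,n}\underline r'$; thus $[\underline b]\mapsto([\underline l],[\underline r])$ is a well-defined injection, and $|B^n/\!\sim_{\bar B,n}|\leq |L^n/\!\sim_{\mathcal L,n}|\cdot|R^n/\!\sim_{\mathcal R,n}|$. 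So it is enough to show that $\mathcal L$ and $\mathcal R$ are $\aleph_0$-categorical.

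Finally, $\mathcal L$ (and dually $\mathcal R$) is $\aleph_0$-categorical. Indeed, the operation of a left zero semigroup is $xy=x$, which puts no constraint on bijections, so $\aut(\mathcal L)$ is the group of all permutations of $L$ preserving the partition of $L$ into the (at most $2^r$) atoms cut out by the $L_k$; equivalently, on $n$-tuples of $L$ the condition $\natural_{L,n}$ of \eqref{natural n} together with, for each $k$, the pattern of membership in $L_k$, are conditions with finitely many choices whose conjunction is contained in $\sim_{\mathcal L,n}$ (two tuples with the same equality pattern and the same atom for each entry are related by an atom-preserving permutation of $L$). By \cite[Lemma 2.8]{Quinncat} and the Ryll--Nardzewski Theorem, $\mathcal L$ is $\aleph_0$-categorical, and the theorem follows; the last sentence is the case $r=1$, $B_1=B$. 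The one point requiring care is the first step --- that each $B_k$ is a ``combinatorial rectangle'' $L_k\times R_k$ and that the factors $\phi_L,\phi_R$ of an automorphism of $\bar B$ must preserve $L_k$ and $R_k$ separately; once this is in place, the reduction to $\mathcal L$ and $\mathcal R$ and their $\aleph_0$-categoricity are routine.
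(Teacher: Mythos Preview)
Your proof is correct and follows essentially the same approach as the paper's: decompose $B=L\times R$, use that each subrectangular band $B_k$ is a combinatorial rectangle $L_k\times R_k$, and show that the conjunction of the equality pattern $\natural$ with the membership pattern in the projections has finitely many classes and refines $\sim_{\bar B,n}$. The only difference is organizational: you first establish $B_k=L_k\times R_k$ and reduce explicitly to the coordinate structures $\mathcal L=(L;L_1,\dots,L_r)$ and $\mathcal R=(R;R_1,\dots,R_r)$, whereas the paper works directly on $B$ and invokes the subrectangular band property only at the end when checking that the constructed automorphism preserves each $B_k$; your factorisation makes the argument slightly cleaner and anticipates the paper's Corollary~\ref{set plus cat}.
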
 

\begin{proof} Let $B=L\times R$, where $L$ is a left zero semigroup and $R$ is a right zero semigroup. For each $1\leq k \leq r$, let $L_k\subseteq L$ and $R_k\subseteq R$ be such that $B_k=L_k\times R_k$.  
 Define a pair of equivalence relations $\sigma_L$ and $\sigma_R$  on $L$ and $R$, respectively, by
\begin{align*}  & i \, \sigma_L \, j \Leftrightarrow [i\in L_k \Leftrightarrow j\in L_k, \text{ for each } k], \\
 & i \, \sigma_R \, j \Leftrightarrow [i\in R_k \Leftrightarrow j\in R_k, \text{ for each } k].
\end{align*}
The equivalence classes of $\sigma_L$ are simply the set $L\setminus \bigcup_{1\leq k \leq r} L_k$ together with certain intersections of the sets  $L_k$. Since $r$ is finite, it follows that $L/\sigma_L$ is finite, and similarly $R/\sigma_R$ is finite. 
Let $\overline{a}=((i_1,j_1),\dots,(i_n,j_n))$ and $\overline{b}=((k_1,\ell_1),\dots,(k_n,\ell_n))$ be a pair of $n$-tuples of $B$ under the four conditions that 
\begin{enumerate}[label=(\arabic*), font=\normalfont]
\item $i_s \, \sigma_L \, k_s$ for each $1\leq s \leq n$, 
\item $j_s \, \sigma_R \, \ell_s$ for each $1\leq s \leq n$,
\item $(i_1,\dots,i_n) \, \natural_{L,n} \, (k_1,\dots,k_n)$, 
\item $(j_1,\dots,j_n) \, \natural_{R,n} \, (\ell_1,\dots,\ell_n)$, 
\end{enumerate}
where $\natural_{L,n}$ and $\natural_{R,n}$ are the equivalence relations given by \eqref{natural n}. 
 By conditions (3) and (4), there exists bijections 
 \[ \phi_L\colon \{i_1,\dots,i_n\}\rightarrow \{k_1,\dots,k_n\} \text{ and } \phi_R\colon \{j_1,\dots,j_n\}\rightarrow \{\ell_1,\dots,\ell_n\}
 \] 
 given by $i_s\phi_L=k_s$ and $j_s\phi_R=\ell_s$ for each $1\leq s\leq n$. 
By condition (1), we can pick a bijection $\Phi_L$ of $L$ which extends $\phi_L$ and fixes each $\sigma_L$-classes setwise, and similarly construct $\Phi_R$.
 Then $\Phi=\Phi_L\times \Phi_R$ is an automorphism of $B$. Moreover, if $(i,j)\in B_k$ then $i\in L_k$ and as $i \, \sigma_L \, (i\Phi_L)$ we have  $i\Phi_L\in L_k$.
 Dually, $j\in R_k$ and as $j \, \sigma_R \, (j\Phi_R)$ we have $j\Phi_R\in R_k$. 
 Hence there exists $\ell\in L$ and $r\in R$  such that $(i\Phi_L,r)$ and $(\ell,j\Phi_R)$ are in $B_k$, so that 
\[ (i\Phi_L,r) (\ell,j\Phi_R) = (i\Phi_L,j\Phi_R)\in B_k
\] 
as $B_k$ is a subrectangular band.
 We have thus shown that $(i,j)\Phi=(i\Phi_L,j\Phi_R)\in B_k$, and so $B_k\Phi\subseteq B_k$. 
We observe that $\Phi^{-1}=\Phi^{-1}_L\times \Phi^{-1}_R$ is also an automorphism of $B$ with $\Phi^{-1}_L$  and $\Phi^{-1}_R$ setwise fixing the $\sigma_L$-classes and $\sigma_R$-classes, respectively. 
Following our previous argument we have $B_k\Phi^{-1}\subseteq B_k$, and so $B_k\Phi=B_k$ for each $k$. Thus $\Phi$ is an automorphism of $\underline{B}$, and is such that 
\[ (i_s,j_s)\Phi=(i_s\Phi_L,j_s\Phi_R)=(i_s\phi_L,j_s\phi_R)=(k_s,\ell_s) 
\] 
for each $1\leq s \leq n$, so that $\overline{a} \, \sim_{\underline{B},n} \, \overline{b}$. Hence, as each of the four conditions on $\overline{a}$ and $\overline{b}$ have finitely many choices, it follows that $\underline{B}$ is $\aleph_0$-categorical.  
\end{proof}

Note that any set can be considered as a structure with no relations, functions or constants. Every bijection of the set is therefore an automorphism, and as such all sets are easily shown to be $\aleph_0$-categorical.
 In fact a simplification of the proof of Theorem \ref{RB cat} gives:  

\begin{corollary}\label{set plus cat} Let $M$ be a set, and $M_1,\dots,M_r$ be a finite list of subsets of $M$.
 Then $(M;M_1,\dots,M_r)$ is $\aleph_0$-categorical. 
\end{corollary}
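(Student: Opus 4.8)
The plan is to mimic the proof of Theorem~\ref{RB cat} but with all the rectangular band structure stripped away, since a bare set has no operation to preserve. First I would fix $M$ and the finite list $M_1,\dots,M_r$ of subsets, and define an equivalence relation $\sigma$ on $M$ by declaring $a\,\sigma\,b$ if and only if $[a\in M_k \Leftrightarrow b\in M_k$ for each $1\le k\le r]$. Exactly as in the theorem, the $\sigma$-classes are the set $M\setminus\bigcup_{k}M_k$ together with the various nonempty intersections of the $M_k$, of which there are at most $2^r$, so $M/\sigma$ is finite.

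Next, given $n$-tuples $\underline{a}=(a_1,\dots,a_n)$ and $\underline{b}=(b_1,\dots,b_n)$ of $M$ satisfying the two conditions that $a_s\,\sigma\,b_s$ for each $s$ and $(a_1,\dots,a_n)\,\natural_{M,n}\,(b_1,\dots,b_n)$, I would produce an automorphism of $(M;M_1,\dots,M_r)$ carrying $\underline{a}$ to $\underline{b}$. The $\natural_{M,n}$-condition gives a well-defined bijection $\phi$ from $\{a_1,\dots,a_n\}$ onto $\{b_1,\dots,b_n\}$ with $a_s\phi=b_s$; the $\sigma$-condition says $\phi$ respects $\sigma$-classes on this finite set, so it extends to a bijection $\Phi$ of $M$ fixing every $\sigma$-class setwise (match up the remaining elements of each $\sigma$-class in the domain with those in the codomain, possible since the classes are in bijection once a finite partial matching respecting them is removed, or simply because any two $\sigma$-classes can only be swapped for each other — here they are fixed). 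Since $\Phi$ fixes each $\sigma$-class setwise and each $M_k$ is a union of $\sigma$-classes, $M_k\Phi=M_k$ for every $k$, so $\Phi\in\aut(M;M_1,\dots,M_r)$, and $\underline{a}\,\sim_{\bar M,n}\,\underline{b}$.

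Finally, since each of the two conditions has finitely many choices ($|M/\sigma|$ is finite and the number of $\natural_{M,n}$-classes is finite by the displayed Example), the intersection of the two corresponding equivalence relations refines $\sim_{\bar M,n}$ and has finitely many classes, so by the criterion recalled just before Theorem~\ref{RB cat} the structure $(M;M_1,\dots,M_r)$ is $\aleph_0$-categorical.

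There is essentially no obstacle here: the only point requiring a word of care is checking that a $\sigma$-class-respecting bijection of the finite set $\{a_1,\dots,a_n\}\cup\{b_1,\dots,b_n\}$ genuinely extends to a bijection of all of $M$ fixing each $\sigma$-class setwise, which is immediate because within each $\sigma$-class the partial map is a bijection between two subsets of that class and the complement of those subsets within the class have equal cardinality (finite or countably infinite). Everything else is a direct and shorter transcription of the rectangular band argument, which is exactly why the statement is phrased as a corollary.
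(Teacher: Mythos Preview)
Your proposal is correct and is precisely the simplification of the proof of Theorem~\ref{RB cat} that the paper alludes to: strip out the $L$/$R$ factorisation, keep the single equivalence $\sigma$ recording membership patterns in the $M_k$, and use $\natural_{M,n}$ together with $\sigma$ to build a class-preserving bijection of $M$. The care you take in the final paragraph about extending the partial bijection within each $\sigma$-class is the only point needing comment, and you handle it correctly.
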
  

\section{A new method: $(M,M';\underline{N};\Psi)$-systems}  \label{new method} 

For many of the structures we will consider, automorphisms can be built from isomorphisms between their components. For example, for a strong semilattice of semigroups $S=[Y;S_{\alpha};\psi_{\alpha,\beta}]$, we can construct automorphisms of $S$ from certain isomorphisms between the semigroups $S_{\alpha}$.
 In this example we also require an automorphism  of the semilattice $Y$, which acts as an   indexing set for the semigroups $S_{\alpha}$. 
  We now extend this idea by setting up some formal machinery to deal with structures in which the automorphisms are built from a collection of data. 

\begin{notation} Given a pair of structures $M$ and $M'$, we let \iso$(M;M')$ denote the set of all isomorphisms from $M$ onto $M'$.
\end{notation}

\begin{definition} Let $M$ be an $L$-structure with fixed substructure $M'$.
 Let~$\mcal{A} = \{M_i:i\in N\}$~be a set of substructures of $M'$ indexed by some $K$-structure $N$ such that $M'=\bigcup_{i\in N} M_i$. 
 Let $N_1,\dots,N_r$ be a finite partition of $N$, and set $\underline{N}=(N;N_1,\dots,N_r)$. 
For each $i,j\in N$, let $\Psi_{i,j}$ be a subset of $\text{Iso}(M_i;M_j)$ under the conditions that 
\begin{enumerate}[itemsep=1ex, leftmargin=0.9cm]
\item[(3.1)]   if $i,j\in N_k$ for some $1\leq k\leq r$ then $\Psi_{i,j} \neq \emptyset$,
\item[(3.2)] if $\phi\in \Psi_{i,j}$ and $\phi'\in \Psi_{j,\ell}$ then $\phi\phi'\in \Psi_{i,\ell}$,
\item[(3.3)] if $\phi\in \Psi_{i,j}$ then $\phi^{-1}\in \Psi_{j,i}$, 
\item[(3.4)] if $\pi\in \text{Aut}(\underline{N})$ and $\phi_i\in \Psi_{i,i\pi}$ for each $i\in N$, then there exists an automorphism of $M$ extending the $\phi_i$.  
\end{enumerate}
 Letting $\Psi=\bigcup_{i,j\in N} \Psi_{i,j}$, then, under the conditions above, we call $\mcal{A}=\{M_i:i\in N\}$  an $(M,M';\underline{N};\Psi)$-system (in $M$).
  If $M'=M$ then we may simply refer to this as an $(M;\underline{N};\Psi)$-system. 
\end{definition} 
By Condition (3.1) if $i,j\in N_k$ for some $k$, then $M_i\cong M_j$. Hence the number of isomorphism types in $\mcal{A}$ is bounded by $r$. Moreover, it follows from Conditions (3.1), (3.2), and (3.3) that $\Psi_{i,i}$ is a subgroup of $\text{Aut}(M_i)$, for each $i\in N$.
If the sets $M_i$ are not pairwise disjoint, then Condition (3.4) should be met with caution. 
Indeed, if $x\in M_i\cap M_j$ then by taking $\pi$ to be the identity map of $\underline{N}$, we have that $x\phi_i,x\phi_j\in M_i\cap M_j$ for all $\phi_i\in \text{Aut}(M_i)$ and $\phi_j\in \text{Aut}(M_j)$ by Condition (3.4).
 However, for our work the sets $M_i$ will mostly be pairwise disjoint, or will all intersect at an element which is fixed by every isomorphism between the $M_i$.
  For example, $M$ could be a semigroup containing a zero, and 0 is the intersection of each of the sets $M_i$. 

Note also that no link needs to exist between the signatures $L$ and $K$. For most of our examples they will be the signature of semigroups and the signature of sets (the empty signature), respectively. 

  Given an $(M;M';\underline{N};\Psi)$-system  $\mcal{A}=\{M_i:i\in N\}$ in $M$, we aim to show that, if $N$ is $\aleph_0$-categorical and each $M_i$ possess a stronger notion of $\aleph_0$-categoricity, then $M$ is $\aleph_0$-categorical. 
  The stronger notion that we require comes from the following definition, which generalises the notion of  $\aleph_0$-categoricity of set extensions. 

\begin{definition} Let $M$ be a structure and $\Psi$ a subgroup of Aut($M$). Then we say that that $M$ is \textit{$\aleph_0$-categorical over $\Psi$} if $\Psi$ has only finitely many orbits in its action on $M^n$ for each $n\geq 1$.
 We denote  the resulting equivalence relation on $M^n$ as $\sim_{M,\Psi,n}$.
\end{definition} 

By taking $\Psi$ to be those automorphisms which fix certain subsets of $M$ we recover our original definition of  $\aleph_0$-categoricity of a set extension. 
Similarly, by taking $\Psi$ to be those automorphisms which preserve a fixed equivalence relation, or those which fix certain equivalence classes, we obtain a pair of notions defined in \cite{Quinncat}. 

\begin{lemma}\label{parts hard sub} 
Let $M$ be a structure, and  $\mcal{A}=\{M_i:i\in N\}$ be an $(M,M';\underline{N};\Psi)$-system.
 If $\underline{N}$ is $\aleph_0$-categorical and each $M_i$ is $ \aleph_0$-categorical over $\Psi_{i,i}$ then 
\[ |(M')^n/\sim_{M,n}|<\aleph_0
\] 
for each $n\geq 1$. 
\end{lemma}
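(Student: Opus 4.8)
The plan is to bound the number of $\sim_{M,n}$-classes of $n$-tuples drawn from $M' = \bigcup_{i \in N} M_i$ by cooking up a finite list of conditions on such tuples, each with finitely many choices, whose conjunction forces automorphic equivalence. Fix $n$. Given an $n$-tuple $\underline{a} = (a_1, \dots, a_n)$ with each $a_s \in M'$, the first ingredient is a choice, for each $s$, of an index $i_s \in N$ with $a_s \in M_{i_s}$; since the $M_i$ need not be disjoint this choice is not canonical, but we simply fix one such choice for each tuple. This produces an $n$-tuple $(i_1, \dots, i_n) \in N^n$. The key structural conditions will then be: (a) that $(i_1, \dots, i_n)$ and the corresponding $(i_1', \dots, i_n')$ for a second tuple $\underline{b}$ are $\sim_{\bar N, n}$-equivalent --- finitely many choices since $\bar N$ is $\aleph_0$-categorical by hypothesis; and (b) that, having grouped the coordinates by which $M_i$ they lie in, the sub-tuples landing in each fixed $M_i$ are $\sim_{M_i, \Psi_{i,i}, n_i}$-equivalent for the appropriate arity $n_i$. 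Condition (b) is where "$\aleph_0$-categorical over $\Psi_{i,i}$" is used; but there are potentially infinitely many indices $i$, so we cannot literally list this condition per $i$. The fix is that by Condition (3.1) there are at most $r$ isomorphism types among the $M_i$, and an isomorphism in $\Psi_{i,j}$ conjugates $\Psi_{i,i}$ to $\Psi_{j,j}$ and hence transports $\sim_{M_i,\Psi_{i,i},m}$-classes bijectively to $\sim_{M_j,\Psi_{j,j},m}$-classes; so the number of such classes depends only on the block $N_k$ containing $i$, giving a genuinely finite amount of data.

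The heart of the argument is then to assemble a single automorphism of $M$ witnessing $\underline{a} \sim_{M,n} \underline{b}$ from these pieces. From condition (a) and $\aleph_0$-categoricity of $\bar N$ we get $\pi \in \operatorname{Aut}(\bar N)$ with $i_s \pi = i_s'$ for all $s$. For each index $i$ appearing among the $i_s$, condition (b) (after transporting along a fixed isomorphism in $\Psi_{i, i\pi}$, which is nonempty by (3.1) since $i$ and $i\pi$ lie in the same $N_k$) yields an element $\phi_i \in \Psi_{i, i\pi}$ sending the $M_i$-part of $\underline{a}$ to the $M_{i\pi}$-part of $\underline{b}$. For indices $i$ not appearing among the $i_s$, we must still choose some $\phi_i \in \Psi_{i, i\pi}$, again nonempty by (3.1); any choice works since those coordinates are irrelevant. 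Now Condition (3.4) applies verbatim: the family $(\phi_i)_{i \in N}$ with $\phi_i \in \Psi_{i, i\pi}$ extends to an automorphism $\Phi$ of $M$. By construction $a_s \Phi = a_s \phi_{i_s} = b_s$ for each $s$, so $\Phi$ witnesses the equivalence.

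The main obstacle I anticipate is bookkeeping around the non-uniqueness of the index choice and the overlaps $M_i \cap M_j$: one must be careful that the "transport along isomorphisms" step is coherent --- i.e. that fixing, for each of the $r$ blocks, a single reference structure and reference subgroup, and comparing everything to it, really does reduce condition (b) to finitely many choices, and that the $\phi_i$ produced agree on overlaps so that (3.4) is legitimately invoked. In practice the paper has flagged exactly this point in the remark after the definition, noting that the $M_i$ will typically be disjoint or meet only in a universally-fixed element (e.g.\ a zero), so the coherence on overlaps is either vacuous or forced; I would state the argument cleanly under that mild hypothesis and remark that it is what is used in all intended applications. A secondary, purely technical point is keeping track of arities: if several coordinates of $\underline a$ lie in the same $M_i$, they must be handled as one sub-tuple of the appropriate length, and one checks that $\natural_{M',n}$-type information (which coordinates coincide) is subsumed by the combination of conditions (a) and (b).
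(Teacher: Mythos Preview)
Your proposal is essentially correct and matches the paper's proof closely; the paper also fixes reference structures $M_{m_k}$ (one per block $N_k$), transports via fixed $\theta_i \in \Psi_{i,m_k}$, compares there under $\Psi_{m_k,m_k}$, and then assembles the automorphism via Condition~(3.4). Two small points are worth noting. First, your worry about overlap coherence is misplaced: Condition~(3.4) is a \emph{hypothesis} of the system, so once you have $\pi \in \operatorname{Aut}(\bar N)$ and any choice of $\phi_i \in \Psi_{i,i\pi}$, the existence of an extending automorphism is given --- you need not (and cannot, at this level of generality) verify compatibility on overlaps yourself; the remark after the definition concerns when (3.4) can be \emph{established} in examples, not its use in the lemma. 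Second, the paper groups coordinates by block $N_k$ rather than by individual index $i$: all entries $a_s$ with $i_s \in N_k$ are transported to $M_{m_k}$ and compared there as a single $n_k$-tuple via one $\sigma_k \in \Psi_{m_k,m_k}$, and then $\phi_i := \theta_i \sigma_k \theta_{i\pi}^{-1}$ is defined uniformly for \emph{all} $i \in N_k$; this is marginally cleaner than your per-index grouping but logically equivalent.
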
 

\begin{proof} Let $\underline{N}=(N;N_1,\dots,N_r)$ and, for each $1\leq k \leq r$, fix some $m_k\in N_k$. For each $i\in N_k$, let $\theta_i\in \Psi_{i,m_k}$, noting that such an element exists by Condition (3.1) on $\Psi$.  
Let $\overline{a}=(a_1,\dots,a_n)$ and $ \overline{b}=(b_1,\dots,b_n)$ be a pair of $n$-tuples of $M'$, with $a_t\in M_{i_t}$ and $b_t\in M_{j_t}$,  and such that $(i_1,\dots,i_n) \, \sim_{\underline{N},n} \, (j_1,\dots,j_n)$ via $\pi\in \text{Aut}(\underline{N})$, say.
 For each $1 \leq k\leq r$, let $i_{k1},i_{k2},\dots ,i_{kn_k}$ be the entries of $(i_1,\dots,i_n)$ belonging to $N_k$, where $k1<k2<\cdots <kn_k$, and set 
\[ \overline{a}_k=(a_{k1},\dots,a_{kn_k})\in (M')^{n_k}.
\]
 We similarly form each $\overline{b}_k$, observing that as $i_t\pi=j_t$ for each $1\leq t \leq n$ and $\pi$ fixes the sets $N_j$ setwise ($1\leq j \leq r$) the elements $j_{k1},j_{k2},\dots ,j_{kn_k}$ are precisely the entries of $(j_1,\dots,j_n)$ belonging to $N_k$, so that $\overline{b}_k=(b_{k1},\dots,b_{kn_k})$ for some $b_{kt}\in M'$.
  Notice that as $N_1,\dots,N_r$ partition $N$ we have $n=n_1+n_2+\cdots + n_r$. Since $i_{kt},j_{kt}\in N_k$ for each $1 \leq t \leq n_k$, we have that $a_{kt}\theta_{i_{kt}}$ and $b_{kt}\theta_{j_{kt}}$ are elements of $M_{m_k}$.
   We may thus suppose further that for each $1\leq k \leq r$,  
\[ (a_{k1}\theta_{i_{k1}},\dots,a_{{kn_k}}\theta_{i_{kn_k}}) \, \sim_{M_{m_k},\Psi_{m_k,m_k},n_k} \, (b_{k1}\theta_{j_{k1}},\dots,b_{{kn_k}}\theta_{j_{kn_k}})
\] 
 via $\sigma_k\in \Psi_{m_k,m_k}$, say (where if $\overline{a}_k$ is a 0-tuple, then we take  $\sigma_k$ to be the identity of $N_{m_k}$). For each $1\leq k \leq r$ and each $i\in N_k$, let 
 \[ \phi_i=\theta_i \sigma_k \theta_{i\pi}^{-1}\colon M_i\rightarrow M_{i\pi},
 \]
  noting that $\phi_i\in \Psi_{i,i\pi}$  by Conditions (3.2) and (3.3) on $\Psi$, since $\theta_i,\sigma_k$ and $\theta_{i\pi}$ are elements of $\Psi$. 
  Hence, by Condition (3.4) on $\Psi$, there exists an automorphism $\phi$ of $M$ extending each $\phi_i$. For any $1\leq k \leq r$ and any $1 \leq t \leq n_k$ we have  
\[ a_{kt}\phi= a_{kt} \phi_{i_{kt}}= a_{kt} \theta_{i_{kt}} \sigma_k \theta_{i_{kt}\pi}^{-1} = b_{kt}\theta_{j_{kt}} \theta_{j_{kt}}^{-1} = b_{kt}, 
\] 
and so $\overline{a} \, \sim_{M,n} \, \overline{b}$ via $\phi$.
 Since $\underline{N}$ is $\aleph_0$-categorical and each $M_i$ are $\aleph_0$-categorical over $\Psi_{i,i}$, the conditions imposed on the tuples $\overline{a}$ and $\overline{b}$ have finitely many choices, and so $|(M')^n/\sim_{M,n}|$ is finite.  
\end{proof} 

By Corollary \ref{set plus cat}, the structure $N$ in the lemma above can simply be a set.
In most cases we take $M'=M$, and the result simplifies accordingly by the RNT as follows. 

\begin{corollary} \label{parts hard} Let $M$ be a structure, and  $\mcal{A}=\{M_i:i\in N\}$ be an $(M;\underline{N};\Psi)$-system. 
If $\underline{N}$ is $\aleph_0$-categorical and each $M_{i}$ is $ \aleph_0$-categorical over $\Psi_{i,i}$, then $M$ is $\aleph_0$-categorical. 
\end{corollary}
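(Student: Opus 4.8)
The plan is to derive this immediately from Lemma~\ref{parts hard sub}, specialised to the case $M'=M$. Recall that, by definition, an $(M;\bar{N};\Psi)$-system is nothing but an $(M,M';\bar{N};\Psi)$-system in which $M'=M$; in particular the defining condition $M'=\bigcup_{i\in N}M_i$ becomes $M=\bigcup_{i\in N}M_i$, so the $M_i$ genuinely cover all of $M$. The two hypotheses of the corollary — that $\bar{N}$ is $\aleph_0$-categorical and that each $M_i$ is $\aleph_0$-categorical over $\Psi_{i,i}$ — are exactly the hypotheses of Lemma~\ref{parts hard sub}. Applying that lemma therefore yields $|(M')^n/\sim_{M,n}| = |M^n/\sim_{M,n}| < \aleph_0$ for every $n\geq 1$.

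Next I would note that, by the very definition of $\sim_{M,n}$, the quotient $M^n/\sim_{M,n}$ is the set of orbits of $\mathrm{Aut}(M)$ acting on $M^n$, so the finiteness just obtained for all $n$ says precisely that $\mathrm{Aut}(M)$ is oligomorphic. Invoking the Ryll--Nardzewski Theorem then gives that $M$ is $\aleph_0$-categorical, which is the desired conclusion.

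Since all the substantive work — the careful bookkeeping over the partition $N_1,\dots,N_r$, the conjugation trick $\phi_i=\theta_i\sigma_k\theta_{i\pi}^{-1}$, and the use of Conditions (3.1)--(3.4) — has already been carried out in Lemma~\ref{parts hard sub}, there is no real obstacle here: the corollary is a one-line consequence. The only point worth stating explicitly is the identification $M'=M$ together with $M=\bigcup_{i\in N}M_i$, which makes the passage from $|(M')^n/\sim_{M,n}|<\aleph_0$ to oligomorphy of $\mathrm{Aut}(M)$ legitimate.
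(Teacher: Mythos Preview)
Your proposal is correct and matches the paper's approach exactly: the paper simply remarks that taking $M'=M$ in Lemma~\ref{parts hard sub} and applying the Ryll--Nardzewski Theorem gives the corollary. Your explicit unpacking of why $M'=M$ makes the finiteness of $|(M')^n/\sim_{M,n}|$ equivalent to oligomorphy is precisely the content of that one-line deduction.
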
 

\begin{example} The corollary above  could be used to efficiently prove     the interplay of $\aleph_0$-categoricity  and the greatest 0-direct decomposition of a semigroup with zero \cite[Theorem 4.8]{Quinncat}. 
Indeed, if $S=\bigsqcup^0_{i\in I} S_i$ is the greatest 0-direct decomposition of $S$,  and $I_1,\dots,I_n$ is a finite partition of $I$ corresponding to the isomorphism types of the summands of $S$, then it is a simple exercise to show that $\mcal{S}=\{S_i:i\in I\}$ is an $(S;(I;I_1,\dots,I_n);\Psi)$-system, where $\Psi$ is the collection of all isomorphisms between summands.
 Since $(I;I_1,\dots,I_n)$ is $\aleph_0$-categorical, it follows by Corollary \ref{parts hard} that $S$ is $\aleph_0$-categorical if each $S_i$ is $\aleph_0$-categorical (over $\Psi_{i,i}=\text{Aut}(S_i)$). 
\end{example} 

\section{Strong semilattices of semigroups} \label{sec strong} 

In this section we study the $\aleph_0$-categoricity of strong semilattices of semigroups  by making use of our most recent methodology.
 We are motivated by the work of the author in \cite{Quinnband} and \cite{Quinninv}, where the homogeneity of bands and inverse semigroups are shown to depend heavily on the homogeneity of strong semilattices of rectangular bands and groups, respectively.
 Recall that a structure is \textit{homogeneous} if every isomorphism between finitely generated substructures extend to an automorphism.
  A uniformly locally finite homogeneous structure is $\aleph_0$-categorical \cite[Corollary 3.1.3]{Pherson}.
 Consequently, each homogeneous band is $\aleph_0$-categorical, although the same is not true for homogeneous inverse semigroups.  

While there has not yet been a general study into $\aleph_0$-categorical semilattices, a complete classification of countable homogeneous semilattices was completed in \cite{Droste} and \cite{DrosteTruss}. Since semilattices are uniformly locally finite, this provids us with  a countably infinite collection  of $\aleph_0$-categorical semilattices. For example, the linear order $\mathbb{Q}$ is a homogeneous semilattice, and all $\aleph_0$-categorical linear orders are classified in \cite{Ros69}.

  Let $Y$ be a semilattice. To each $\alpha \in Y$ associate a semigroup $S_{\alpha}$, and assume that $S_{\alpha} \cap S_{\beta} = \emptyset$ if $\alpha \neq \beta$.
   For each pair $\alpha, \beta \in Y$ with $\alpha \geq \beta$, let   $\psi_{\alpha, \beta}\colon  S_{\alpha} \rightarrow S_{\beta}$ be a morphism such that  $\psi_{\alpha, \alpha}$ is  the   identity   mapping  and if  $\alpha \geq \beta \geq \gamma$ then $  \psi_{\alpha, \beta} \psi_{\beta, \gamma} = \psi_{\alpha, \gamma}$. 
On the set $S=\bigcup_{\alpha \in Y} S_{\alpha}$ define a multiplication by 
\[ a * b = (a \psi_{\alpha, \alpha \beta})(b \psi_{\beta, \alpha \beta})
\] 
for $a\in S_{\alpha}, b \in S_{\beta}$, and denote the resulting structure by  $S=[Y;S_{\alpha}; \psi_{\alpha, \beta}]$. Then $S$ is a semigroup, and is called a \textit{strong semilattice $Y$ of the semigroups $S_{\alpha}$} ($\alpha\in Y$). The semigroups $S_{\alpha}$ are called the \textit{components} of $S$. We follow the convention of denoting an element $a$ of $S_{\alpha}$ as $a_{\alpha}$.

 The idempotents of $S=[Y;S_{\alpha}; \psi_{\alpha, \beta}]$ are given by  $E(S)=\bigcup_{\alpha\in Y} E(S_{\alpha})$, and if $E(S)$ forms a subsemigroup of $S$ then 
 \[ E(S) = [Y;E(S_{\alpha});\psi_{\alpha,\beta}|_{E(S_{\alpha})}]. 
\]  
We build automorphisms of strong semilattices of semigroups in a natural way using the following well known result. A proof can be found in \cite{Quinn}. 

\begin{theorem}\label{iso strong}  Let $S=[Y; S_{\alpha}; \psi_{\alpha, \beta}]$ be a  strong semilattices of semigroups. Let $\pi\in \text{Aut}(Y)$ and, for each $\alpha \in Y$, let $\theta_{\alpha}\colon  S_{\alpha}\rightarrow S_{\alpha \pi}$ be an isomorphism. Assume further that for any $\alpha \geq \beta$, the diagram  
\begin{align} \label{1} \xymatrix{
S_{\alpha} \ar[d]^{\psi_{\alpha, \beta}} \ar[r]^{\theta_{\alpha}} &S_{\alpha \pi} \ar[d]^{\psi_{\alpha \pi, \beta \pi}} \\\
S_{\beta} \ar[r]^{\theta_{\beta}} &S_{\beta \pi}}
\end{align}  
 commutes. Then the map $\theta=\bigcup_{\alpha\in Y} \theta_{\alpha}$ is an automorphism of $S$, denoted $\theta=[\theta_{\alpha},\pi]_{\alpha\in Y}$.
\end{theorem}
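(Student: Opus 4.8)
The plan is to verify directly that $\theta = \bigcup_{\alpha \in Y} \theta_\alpha$ is a well-defined bijection on $S$ that respects the multiplication $*$. Well-definedness and bijectivity are immediate from the setup: since the $S_\alpha$ are pairwise disjoint and $S = \bigcup_{\alpha} S_\alpha$, every $a \in S$ lies in a unique $S_\alpha$, so $\theta$ is well-defined; each $\theta_\alpha$ is a bijection $S_\alpha \to S_{\alpha\pi}$, and since $\pi$ is a bijection of $Y$ the sets $S_{\alpha\pi}$ are again pairwise disjoint and cover $S$, so $\theta$ is a bijection of $S$ with inverse $\bigcup_{\alpha} \theta_\alpha^{-1} = [\theta_\alpha^{-1}\text{-type data}]$. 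The only real content is the homomorphism property.

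For the homomorphism property, take $a \in S_\alpha$ and $b \in S_\beta$ and write $\gamma = \alpha\beta$. By definition $a * b = (a\psi_{\alpha,\gamma})(b\psi_{\beta,\gamma}) \in S_\gamma$, so $(a*b)\theta = (a*b)\theta_\gamma = \big((a\psi_{\alpha,\gamma})(b\psi_{\beta,\gamma})\big)\theta_\gamma$, and since $\theta_\gamma$ is a homomorphism this equals $\big((a\psi_{\alpha,\gamma})\theta_\gamma\big)\big((b\psi_{\beta,\gamma})\theta_\gamma\big)$. On the other side, $a\theta = a\theta_\alpha \in S_{\alpha\pi}$ and $b\theta = b\theta_\beta \in S_{\beta\pi}$, and since $\pi$ is a semilattice automorphism, $(\alpha\pi)(\beta\pi) = (\alpha\beta)\pi = \gamma\pi$; hence
\[
(a\theta)*(b\theta) = \big((a\theta_\alpha)\psi_{\alpha\pi,\gamma\pi}\big)\big((b\theta_\beta)\psi_{\beta\pi,\gamma\pi}\big).
\]
Comparing the two expressions, it suffices to show $(a\psi_{\alpha,\gamma})\theta_\gamma = (a\theta_\alpha)\psi_{\alpha\pi,\gamma\pi}$ and the analogous identity for $b$. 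But these are exactly the statements that the squares
\[
\xymatrix{
S_\alpha \ar[d]^{\psi_{\alpha,\gamma}} \ar[r]^{\theta_\alpha} & S_{\alpha\pi} \ar[d]^{\psi_{\alpha\pi,\gamma\pi}} \\
S_\gamma \ar[r]^{\theta_\gamma} & S_{\gamma\pi}
}
\]
commute, which holds by the hypothesis \eqref{1} applied to the pairs $\alpha \geq \gamma$ and $\beta \geq \gamma$ (noting $\gamma = \alpha\beta \leq \alpha,\beta$). This gives $(a*b)\theta = (a\theta)*(b\theta)$, so $\theta$ is an endomorphism, and being bijective, an automorphism.

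I do not anticipate a genuine obstacle here — the result is essentially a bookkeeping exercise, and the commuting-square hypothesis has been tailored precisely to make the computation go through. The one point requiring a little care is making sure the indices line up: one must use that $\pi$ preserves the semilattice operation to identify $(\alpha\pi)(\beta\pi)$ with $(\alpha\beta)\pi$, and one must apply the hypothesis \eqref{1} with $\beta$ replaced by the meet $\alpha\beta$ rather than to the original $\alpha, \beta$. Everything else is routine, so I would present the argument compactly, emphasizing only the reduction to the commutativity of the squares.
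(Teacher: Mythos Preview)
Your proof is correct and is the standard direct verification. Note, however, that the paper does not actually prove this theorem: it is stated as a well-known result with a proof deferred to \cite{Quinn}, so there is no in-paper argument to compare against. Your approach---checking bijectivity from the disjoint decomposition and then reducing the homomorphism property to the commuting squares via $(\alpha\pi)(\beta\pi)=(\alpha\beta)\pi$---is exactly the expected one.
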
  

We denote the diagram (\ref{1}) by $[\alpha,\beta;\alpha\pi,\beta\pi]$. The map $\pi$ is called the \textit{induced (semilattice) automorphism} of $Y$, denoted $\theta^Y$. 

 Unfortunately, not all automorphisms of strong semilattices of semigroups can be constructed as in Theorem \ref{iso strong}. 
 We shall call a strong semilattice of semigroups $S$ \textit{automorphism-pure} if every automorphism of $S$ can be constructed as in Theorem \ref{iso strong}.  
For example, every strong semilattice of completely simple semigroups is automorphism-pure \cite[Lemma IV.1.8]{Petrich99}, and so both strong semilattices of groups (Clifford semigroups) and strong semilattices of rectangular bands (normal bands) are automorphism-pure.

Let $S=[Y;S_{\alpha};\psi_{\alpha,\beta}]$ be a strong semilattice of semigroups.
 We denote the equivalence relation on $Y$ corresponding to isomorphism types of the semigroups $S_{\alpha}$ by $\eta_S$, so that $\alpha \, \eta_S \, \beta \Leftrightarrow  S_{\alpha}\cong S_{\beta}.$ 
We let $Y^S$ denote the set extension of $Y$ given by $Y^S :=(Y;Y/\eta_S)$.

\begin{proposition}\label{aut pure} Let $S=[Y;S_{\alpha};\psi_{\alpha,\beta}]$ be automorphism-pure and $\aleph_0$-categorical. Then each $S_{\alpha}$ is $\aleph_0$-categorical and $Y^S$ is $\aleph_0$-categorical, with $Y/\eta_S$ finite.
\end{proposition}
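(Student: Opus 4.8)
The plan is to deduce both conclusions from the Ryll-Nardzewski Theorem together with the results on relatively characteristic and pivoted subsets developed in Section~\ref{basic cat}, exploiting the automorphism-purity of $S$ to translate between automorphisms of $S$ and the data $(\pi,\{\theta_\alpha\})$.

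First I would establish that each component $S_\alpha$ inherits $\aleph_0$-categoricity. The natural route is to observe that the components of a strong semilattice of semigroups are closely tied to Green's relations: $S_\alpha$ is a union of $\mathcal{D}$-classes of $S$ (indeed $S_\alpha=\{x\in S: x\,\mathcal{D}\,e \text{ for some idempotent below a suitable element}\}$ is $\mathcal{J}$-saturated), and in particular $S_\alpha$ is the $\mathcal{J}$-class structure sitting above $\alpha$. More concretely, pick any $a_\alpha\in S_\alpha$; then $S_\alpha$ is determined as a pivoted relatively characteristic subset with pivot $a_\alpha$, because any automorphism $\theta=[\theta_\beta,\pi]$ of $S$ with $a_\alpha\theta\in S_{\alpha'}$ must satisfy $\alpha\pi=\alpha'$ and hence restricts to the isomorphism $\theta_\alpha:S_\alpha\to S_{\alpha'}$ — here automorphism-purity is exactly what is needed. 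Thus $\{(S_\alpha,a_\alpha):\alpha\in Y\}$ (choosing one $a_\alpha$ per class) is a system of $1$-pivoted p.r.c.\ substructures of $S$, and Proposition~\ref{rel-char cat} gives that each $S_\alpha$ is $\aleph_0$-categorical.

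Next I would handle $Y^S$. Since $S$ is $\aleph_0$-categorical, $\mathrm{Aut}(S)$ is oligomorphic by the RNT. Automorphism-purity gives a group homomorphism $\theta\mapsto\theta^Y=\pi$ from $\mathrm{Aut}(S)$ onto a subgroup of $\mathrm{Aut}(Y)$; moreover any such $\pi$ must preserve $\eta_S$ (if $S_\alpha\cong S_\beta$ one can build an automorphism of $S$ swapping the two blocks appropriately, so $\alpha\,\eta_S\,\beta$ is automorphism-invariant), so the image lies in $\mathrm{Aut}(Y^S)$. The key point is that this homomorphism, composed with the action on tuples, shows that an orbit of $\mathrm{Aut}(S)$ on $S^n$ projects onto an orbit of $\mathrm{Aut}(Y^S)$ on $Y^n$: given $\underline\alpha,\underline\beta\in Y^n$ in the same $\mathrm{Aut}(Y^S)$-orbit — no wait, I want the converse direction — I would instead argue that if two $n$-tuples $\underline\alpha$ and $\underline\beta$ of $Y$ lie in distinct $\mathrm{Aut}(Y^S)$-orbits, then lifting each $\alpha_i,\beta_i$ to any chosen element of $S_{\alpha_i}$, $S_{\beta_i}$ gives $n$-tuples of $S$ in distinct $\mathrm{Aut}(S)$-orbits (since an automorphism of $S$ sending one lift to the other would induce, via $\theta\mapsto\theta^Y$, an element of $\mathrm{Aut}(Y^S)$ witnessing the orbit equality). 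Hence the number of $\mathrm{Aut}(Y^S)$-orbits on $Y^n$ is bounded by the number of $\mathrm{Aut}(S)$-orbits on $S^n$, which is finite; so $Y^S$ is $\aleph_0$-categorical by the RNT. Finiteness of $Y/\eta_S$ then follows, since the blocks of $\eta_S$ are precisely the $\mathrm{Aut}(Y^S)$-orbits on $Y^1=Y$ (a single element $\alpha$ of $Y$ is a fixed point of $Y^S$'s unary relations only up to its $\eta_S$-class, and any two elements in the same $\eta_S$-class are in the same $\mathrm{Aut}(Y^S)$-orbit by a block-swap automorphism of $S$), and there are only finitely many such orbits.

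The main obstacle will be the lifting arguments that require \emph{constructing} automorphisms of $S$: specifically, showing that $\alpha\,\eta_S\,\beta$ implies the existence of an automorphism of $S$ realising a transposition of the corresponding blocks of $Y$, and more generally realising any $\eta_S$-preserving permutation of $Y$ that one needs. This is not automatic — one must check the commuting-square condition \eqref{1} of Theorem~\ref{iso strong} for the chosen $\pi$ and $\theta_\alpha$'s, which constrains how the connecting morphisms $\psi_{\alpha,\beta}$ interact. The cleanest fix is probably to avoid needing arbitrary such automorphisms: for the $\eta_S$-finiteness it suffices that each $\eta_S$-class, intersected with the $\mathrm{Aut}(Y^S)$-orbit structure, behaves well, and one can instead run the whole argument through the p.r.c.\ machinery by noting $\{(S_\alpha, a_\alpha)\}$ already forces $\{|S_\alpha|\}$ and the isomorphism types to be finite in number, then separately showing $Y\cong Y/{\sim}$ is controlled. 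I would expect the referee-proof version to isolate a lemma: \emph{for an automorphism-pure $S$, $\mathrm{Aut}(S)$ oligomorphic implies $\mathrm{Aut}(Y^S)$ oligomorphic and $Y/\eta_S$ finite}, proved by the bounding-of-orbits argument above, with the block-swap construction verified once and for all using that $\eta_S$-equivalent components are genuinely isomorphic and the connecting maps can be transported along those isomorphisms.
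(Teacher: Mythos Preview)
Your overall strategy matches the paper's: use automorphism-purity to exhibit $\{(S_\alpha,a_\alpha):\alpha\in Y\}$ as a system of $1$-pivoted p.r.c.\ subsemigroups (Proposition~\ref{rel-char cat} then gives each $S_\alpha$ $\aleph_0$-categorical and finitely many isomorphism types), and bound $\mathrm{Aut}(Y^S)$-orbits on $Y^n$ by $\mathrm{Aut}(S)$-orbits on $S^n$ via the induced semilattice automorphism $\theta\mapsto\theta^Y$.

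Two of your intermediate justifications are wrong, though you eventually route around them. First, to see that $\pi=\theta^Y$ lies in $\mathrm{Aut}(Y^S)$ you do \emph{not} need to build block-swap automorphisms of $S$; the point is immediate: since $\theta_\alpha:S_\alpha\to S_{\alpha\pi}$ is an isomorphism, $\alpha\,\eta_S\,\alpha\pi$ for every $\alpha$, so $\pi$ fixes each $\eta_S$-class setwise. This is exactly the paper's one-line argument. Second, your claim that the $\eta_S$-classes are \emph{precisely} the $\mathrm{Aut}(Y^S)$-orbits on $Y$ is false in general (take $Y$ a two-element chain with $S_\alpha\cong S_\beta$: one $\eta_S$-class, two orbits), and it cannot be rescued by block-swaps, since the commuting squares of Theorem~\ref{iso strong} need not be satisfiable. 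The paper avoids this entirely: finiteness of $Y/\eta_S$ comes straight out of Proposition~\ref{rel-char cat} applied to the p.r.c.\ system, with no orbit analysis needed. You do arrive at this fix in your last paragraph, so the final version of your argument is correct and coincides with the paper's; just strip out the block-swap detours.
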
 

\begin{proof} For each $\alpha\in Y$ fix some $x_{\alpha}\in S_{\alpha}$. We claim that $\{(S_{\alpha},x_{\alpha}):\alpha\in Y\}$ forms a system of  1-pivoted p.r.c. subsemigroups of $S$.
 Indeed, let $\theta$ be an automorphism of $S$ such that $x_{\alpha}\theta=x_{\beta}$ for some $\alpha,\beta\in Y$.
  Since $S$ is automorphism-pure, there exists $\pi\in \text{Aut}(Y)$ and isomorphisms $\theta_{\alpha}\colon S_\alpha \rightarrow S_{\alpha\pi}$ ($\alpha\in Y$) such that $\theta=[\theta_{\alpha},\pi]_{\alpha\in Y}$. Hence $S_{\alpha}\theta=S_{\beta}$, and the claim follows. 
  Consequently, by the $\aleph_0$-categoricity of  $S$ and Proposition \ref{rel-char cat}, each $S_{\alpha}$ is $\aleph_0$-categorical and $Y/\eta_S$ is finite.

 Let $\overline{a}=(\alpha_1,\dots,\alpha_n)$ and $\overline{b}=(\beta_1,\dots,\beta_n)$ be a pair of $n$-tuples of $Y$ such that there exists $a_{\alpha_k}\in S_{\alpha_k}$ and $b_{\beta_k}\in S_{\beta_k}$ with $(a_{\alpha_1}, \dots, a_{\alpha_n}) \, \sim_{S,n} \, (b_{\beta_1},\dots, b_{\beta_n})$ via $[\theta'_{\alpha},\pi']_{\alpha\in Y}\in \text{Aut}(S)$, say. 
Since $\pi'\in \text{Aut}(Y)$ and $S_{\alpha}\cong S_{\alpha\pi'}$ for each $\alpha\in Y$, it follows that $\pi' \in \text{Aut}(Y^S)$.  
Moreover, $\alpha_k\pi'=\beta_k$ for each $k$, so that $\overline{a} \, \sim_{Y^S,n} \, \overline{b}$ via $\pi'$. We have thus shown that 
 \[ |Y^n/\sim_{Y^S,n}|\leq |S^n/\sim_{S,n}|<\aleph_0,
 \]
as $S$ is $\aleph_0$-categorical. Hence $Y^S$ is $\aleph_0$-categorical.   
\end{proof} 

A natural question arises: how can we build an $\aleph_0$-categorical strong semilattice of semigroups from an $\aleph_0$-categorical semilattice and a collection of $\aleph_0$-categorical semigroups? 
In this paper we will only be concerned with the $\aleph_0$-categoricity of strong semilattices of semigroups in which all connecting morphisms are injective or all are constant. 
For arbitrary connecting morphisms, the problem of assessing $\aleph_0$-categoricity appears to be difficult to capture in a reasonable way.
 Examples of more complex $\aleph_0$-categorical strong semilattices of semigroups arise from \cite{Quinnband}, where the \textit{universal} normal band is shown to have surjective but not injective connecting morphisms.   
 We first study the case where each connecting morphism is a constant map. 

 Suppose that $Y$ is a semilattice and, for each $\alpha\in Y$, $S_{\alpha}$ is a semigroup containing an idempotent $e_{\alpha}$. For each $\alpha\in Y$ let $\psi_{\alpha,\alpha}$ be the identity automorphism of $S_{\alpha}$, and for $\alpha> \beta$ let $\psi_{\alpha,\beta}$ be the constant map with image $\{e_{\beta}\}$. 
 We follow the notation of \cite{Worawiset} and let $\psi_{\alpha,\beta}:=C_{\alpha,e_{\beta}}$ for each $\alpha>\beta$ in $Y$. It is easy to check that $\psi_{\alpha,\beta}\psi_{\beta,\gamma}=\psi_{\alpha,\gamma}$ for all $\alpha\geq \beta \geq \gamma$ in $Y$, so that $S=[Y;S_{\alpha};C_{\alpha,e_{\beta}}]$ forms a strong semilattice of semigroups.
  We call $S$ a  \textit{constant strong semilattice of semigroups}.
    
\begin{definition} If $S=[Y;S_{\alpha};C_{\alpha,e_{\beta}}]$ is a constant strong semilattice of semigroups, then we denote the subset of \iso$(S_{\alpha};S_{\beta})$ consisting of those isomorphisms which map $e_{\alpha}$ to $e_{\beta}$ as \iso$(S_{\alpha};S_{\beta})^{[e_{\alpha};e_{\beta}]}$.
 Notice that the set Iso$(S_{\alpha};S_{\alpha})^{[e_{\alpha};e_{\alpha}]}$ is simply the subgroup Aut$(S_{\alpha};\{e_{\alpha}\})$ of Aut($S_{\alpha}$). 
We may then define a relation $\upsilon_S$ on $Y$ by 
\[ \alpha \, \upsilon_S \, \beta \Leftrightarrow \text{\iso}(S_{\alpha};S_{\beta})^{[e_{\alpha};e_{\beta}]}\neq \emptyset,
\] 
so that $\upsilon_S\subseteq \eta_S$.   
\end{definition} 

The relation $\upsilon_S$ is reflexive since $1_{S_{\alpha}}\in \text{Aut}(S_{\alpha};\{e_{\alpha}\})$ for each $\alpha\in Y$, and it easily follows that $\upsilon_S$ forms an equivalence relation on $Y$.  

\begin{proposition}\label{e's cat} Let $S=[Y;S_{\alpha};C_{\alpha,e_{\beta}}]$ be such that $Y/\upsilon_S=\{Y_1,\dots,Y_r\}$ is finite, $\mcal{Y}=(Y;Y_1,\dots,Y_r)$ is $\aleph_0$-categorical and each $S_{\alpha}$ is $\aleph_0$-categorical.
 Then $S$ is $\aleph_0$-categorical.  
\end{proposition}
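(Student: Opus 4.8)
The plan is to realize $S=[Y;S_\alpha;C_{\alpha,e_\beta}]$ as an $(S;\bar N;\Psi)$-system and then invoke Corollary~\ref{parts hard}. First I would take $N=Y$ as a set (empty signature), and partition it as $\bar N = \mathcal Y = (Y;Y_1,\dots,Y_r)$, the partition into $\upsilon_S$-classes, which is $\aleph_0$-categorical by hypothesis. For each $\alpha,\beta\in Y$ I would set $\Psi_{\alpha,\beta}=\mathrm{Iso}(S_\alpha;S_\beta)^{[e_\alpha;e_\beta]}$, the isomorphisms carrying $e_\alpha$ to $e_\beta$, and $\Psi=\bigcup_{\alpha,\beta}\Psi_{\alpha,\beta}$. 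Condition (3.1) is exactly the definition of $\upsilon_S$ together with the fact that $Y_1,\dots,Y_r$ are its classes; conditions (3.2) and (3.3) follow because composing (resp.\ inverting) isomorphisms that fix the distinguished idempotents again fixes them, and these are the remarks already made after the definition of $\upsilon_S$. So the content is in verifying (3.4): given $\pi\in\mathrm{Aut}(\mathcal Y)$ and $\phi_\alpha\in\Psi_{\alpha,\alpha\pi}$ for each $\alpha$, I must produce an automorphism of $S$ extending all the $\phi_\alpha$.

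For (3.4) I would appeal to Theorem~\ref{iso strong}: since $\pi$ is a bijection of $Y$ preserving the partition $Y_1,\dots,Y_r$, and since $\alpha\,\upsilon_S\,\beta$ forces $\alpha\,\eta_S\,\beta$, $\pi$ permutes $\eta_S$-classes, but more importantly I just need $\pi\in\mathrm{Aut}(Y)$ — and here is the one genuine subtlety, so let me flag it: $\mathrm{Aut}(\mathcal Y)$ consists of bijections of the \emph{set} $Y$ permuting the $Y_i$ setwise, not of semilattice automorphisms. This is fine because the connecting maps are constant, so the commuting-square condition \eqref{1} of Theorem~\ref{iso strong} is automatic whenever $\pi$ is merely a bijection respecting which pairs $\alpha\geq\beta$ we care about --- wait, that is not automatic. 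So the cleaner route is: I do \emph{not} use Theorem~\ref{iso strong} directly with an arbitrary $\pi$; instead I observe that $S$ is a constant strong semilattice, hence the multiplication $a_\alpha * b_\beta = (a_\alpha C_{\alpha,e_{\alpha\beta}})(b_\beta C_{\beta,e_{\alpha\beta}}) = e_{\alpha\beta}$ whenever $\alpha\beta<\alpha$ and $\alpha\beta<\beta$, while it equals the $S_\alpha$-product when $\alpha=\beta$. Define $\theta=\bigcup_\alpha\phi_\alpha:S\to S$. It is a bijection. To check it is a homomorphism, take $a_\alpha\in S_\alpha$, $b_\beta\in S_\beta$; if $\alpha=\beta$ then $\theta$ restricted to $S_\alpha$ is the isomorphism $\phi_\alpha$ so it respects that product; if $\alpha\neq\beta$, say $\alpha\beta=\gamma$ with $\gamma<\alpha$ or $\gamma<\beta$, then $a_\alpha*b_\beta = e_\gamma$ (in the generic case $\gamma$ strictly below both; the mixed case $\gamma=\alpha<\beta$ gives $a_\alpha e_\alpha$, still in $S_\alpha$, handled by $\phi_\alpha$ being a homomorphism fixing $e_\alpha$). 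So I need $\theta(a_\alpha)*\theta(b_\beta) = \theta(e_\gamma)=e_{\gamma\pi}$. Now $\theta(a_\alpha)\in S_{\alpha\pi}$ and $\theta(b_\beta)\in S_{\beta\pi}$, and their product is $e_{(\alpha\pi)(\beta\pi)}$, so I need $(\alpha\pi)(\beta\pi)=\gamma\pi=(\alpha\beta)\pi$. This is precisely where $\pi$ must be a \emph{semilattice} automorphism, not merely a set bijection preserving $\upsilon_S$-classes.

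Therefore the honest plan is to \emph{not} take $N$ to be a bare set, but to retain enough semilattice structure: I would instead let $\bar N = (Y;Y_1,\dots,Y_r)$ be the semilattice $Y$ together with the unary predicates $Y_i$, so that $\mathrm{Aut}(\bar N)=\mathrm{Aut}(Y;Y_1,\dots,Y_r)$ consists of semilattice automorphisms preserving the $\upsilon_S$-classes setwise. For such $\pi$ the computation above goes through and $\theta=[\phi_\alpha,\pi]_{\alpha\in Y}$ is an automorphism of $S$ by Theorem~\ref{iso strong} (the squares \eqref{1} commute because both routes send everything to a constant idempotent and $\phi$ fixes the distinguished idempotents, i.e.\ $e_\beta\phi_\beta = e_{\beta\pi}$). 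So (3.4) holds, and $\mathcal A=\{S_\alpha:\alpha\in Y\}$ is an $(S;\bar N;\Psi)$-system. It remains to check the hypotheses of Corollary~\ref{parts hard}: that $\bar N$ is $\aleph_0$-categorical, and that each $S_\alpha$ is $\aleph_0$-categorical over $\Psi_{\alpha,\alpha}=\mathrm{Aut}(S_\alpha;\{e_\alpha\})$. The former: $Y/\eta_S$ is finite (since $\upsilon_S\subseteq\eta_S$ and $Y/\upsilon_S$ is finite), so $\mathcal Y^S$-type data is coarser than $\mathcal Y$, and $\mathcal Y=(Y;Y_1,\dots,Y_r)$ is $\aleph_0$-categorical by hypothesis — and this is a set extension of the semilattice $Y$; but I need $\mathcal Y$ as a semilattice-plus-predicates to be $\aleph_0$-categorical. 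Hmm — the hypothesis literally says $(Y;Y_1,\dots,Y_r)$ is $\aleph_0$-categorical, and in this paper's convention $Y$ is a semilattice, so that \emph{is} the semilattice with predicates; good, so $\bar N$ is $\aleph_0$-categorical as stated.

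For the component hypothesis: each $S_\alpha$ is $\aleph_0$-categorical (given) and $\{e_\alpha\}$ is a single subset, so $(S_\alpha;\{e_\alpha\})$ is $\aleph_0$-categorical by Lemma~\ref{over char 2} — indeed a one-element subset is $\{x\}$-pivoted relatively characteristic with $x=e_\alpha$, or one simply notes directly via the RNT that adding finitely many constants/unary predicates to an $\aleph_0$-categorical structure keeps it $\aleph_0$-categorical. Thus $S_\alpha$ is $\aleph_0$-categorical over $\mathrm{Aut}(S_\alpha;\{e_\alpha\})=\Psi_{\alpha,\alpha}$. Having verified all the hypotheses, Corollary~\ref{parts hard} yields that $S$ is $\aleph_0$-categorical. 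The main obstacle, as the discussion above shows, is getting Condition (3.4) right: one must be careful that $\mathrm{Aut}(\bar N)$ is taken to consist of \emph{semilattice} automorphisms (not arbitrary set bijections) so that the multiplication-preservation identity $(\alpha\beta)\pi=(\alpha\pi)(\beta\pi)$ holds, which is exactly what makes $\bigcup_\alpha\phi_\alpha$ a semigroup homomorphism on the constant strong semilattice; everything else is bookkeeping with the definition of $\upsilon_S$.
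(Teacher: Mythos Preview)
Your proposal is correct and follows essentially the same route as the paper: both set up $\{S_\alpha:\alpha\in Y\}$ as an $(S;\mathcal{Y};\Psi)$-system with $\Psi_{\alpha,\beta}=\mathrm{Iso}(S_\alpha;S_\beta)^{[e_\alpha;e_\beta]}$, verify (3.4) via Theorem~\ref{iso strong} by checking that the squares $[\alpha,\beta;\alpha\pi,\beta\pi]$ commute (using $e_\beta\phi_\beta=e_{\beta\pi}$ and constancy of the connecting maps), and then invoke Corollary~\ref{parts hard}. Your self-correction---that $\bar N$ must be the \emph{semilattice} $Y$ with the predicates $Y_i$, not merely the set---is exactly the right reading of the hypothesis, and the paper takes this for granted; the only cosmetic difference is that the paper cites \cite[Lemma~2.6]{Quinncat} rather than Lemma~\ref{over char 2} for the passage from $\aleph_0$-categoricity of $S_\alpha$ to $\aleph_0$-categoricity over $\mathrm{Aut}(S_\alpha;\{e_\alpha\})$.
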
 

\begin{proof} We  prove that $\{S_{\alpha}:\alpha\in Y\}$ forms an $(S;\mcal{Y};\Psi)$-system for some $\Psi$. For each $\alpha,\beta\in Y$, let $\Psi_{\alpha,\beta}=\text{\iso}(S_{\alpha};S_{\beta})^{[e_{\alpha};e_{\beta}]}$ and fix $\Psi=\bigcup_{\alpha,\beta\in Y} \Psi_{\alpha,\beta}$. 
Then Conditions (3.1), (3.2) and (3.3) are seen to be satisfied since $\upsilon_S$ forms an equivalence relation on $Y$. Let $\pi\in \text{Aut}(\mcal{Y})$ and, for each $\alpha\in Y$, let $\theta_{\alpha}\in \Psi_{\alpha,\alpha\pi}$.
We claim that $\theta=[\theta_{\alpha},\pi]_{\alpha\in Y}$ is an automorphism of $S$. 
   Indeed, for any $s_{\alpha}\in S_{\alpha}$ and any $\beta< \alpha$ we have 
\[ s_{\alpha}C_{\alpha,e_{\beta}}\theta_{\beta} = e_{\beta}\theta_{\beta}=e_{\beta\pi} = s_{\alpha}\theta_{\alpha} C_{\alpha\pi,e_{\beta\pi}} 
\] 
so that the diagram $[\alpha,\beta;\alpha\pi,\beta\pi]$ commutes. Moreover $[\alpha,\alpha;\alpha\pi,\alpha\pi]$ commutes as
\[ s_{\alpha}1_{S_{\alpha}}\theta_{\alpha}=s_{\alpha}\theta_{\alpha}=s_{\alpha}\theta_{\alpha}1_{S_{\alpha\pi}},
\] 
and the claim follows by  Theorem \ref{iso strong}. 
Since  $\theta$ extends each $\theta_{\alpha}$, we have that $\{S_{\alpha}:\alpha\in Y\}$ is an $(S;\mcal{Y};\Psi)$-system. 
 Moreover, as $S_\alpha$ is $\aleph_0$-categorical, it is $\aleph_0$-categorical over $\Psi_{\alpha,\alpha}=\text{Aut}(S_{\alpha};\{e_{\alpha}\})$ by \cite[Lemma 2.6]{Quinncat}.
  Hence $S$ is $\aleph_0$-categorical by Corollary \ref{parts hard}.   
\end{proof}

Examining our two main classes of automorphism-pure strong semilattices of semigroups: Clifford semigroups and normal bands, the result above reduces accordingly.
 If $S=[Y;G_{\alpha};C_{\alpha,e_{\beta}}]$ is a constant strong semilattice of groups, then $e_{\alpha}$ is the identity of $G_{\alpha}$, and so \iso $(G_{\alpha};G_{\beta})=\text{\iso}(G_{\alpha};G_{\beta})^{[e_{\alpha};e_{\beta}]}$ for each $\alpha,\beta\in Y$. 
On the other hand, if $S=[Y;B_{\alpha};C_{\alpha,e_{\beta}}]$ is a constant strong semilattice of rectangular bands, then it follows from  Lemma \ref{rb iso} that $\text{\iso}(B_{\alpha};B_{\beta})\neq \emptyset$ if and only if $\text{\iso}(B_{\alpha};B_{\beta})^{[e_{\alpha};e_{\beta}]} \neq \emptyset$, for any $e_{\alpha}\in B_{\alpha},e_{\beta}\in B_{\beta}$.
In both cases we therefore have  $\upsilon_S=\eta_S$.
 Moreover,  each rectangular band $B_{\alpha}$ is $\aleph_0$-categorical by Theorem \ref{RB cat}, and the following result is then immediate by Propositions \ref{aut pure} and \ref{e's cat}. 

\begin{corollary} Let $S=[Y;S_{\alpha};C_{\alpha,e_{\beta}}]$ be a constant strong semilattice of rectangular bands (groups). Then $S$ is $\aleph_0$-categorical if and only if $Y^S$ is $\aleph_0$-categorical, with $Y/\eta_S$ finite  (and each group $S_{\alpha}$ is $\aleph_0$-categorical). 
\end{corollary}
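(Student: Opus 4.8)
The plan is to deduce the corollary directly from the two previous results, Propositions~\ref{aut pure} and \ref{e's cat}, after observing that for the special classes in question the relation $\upsilon_S$ coincides with $\eta_S$ and that the set extensions $\mathcal{Y}=(Y;Y_1,\dots,Y_r)$ and $Y^S=(Y;Y/\eta_S)$ carry the same automorphism group. The first step is the identification $\upsilon_S=\eta_S$. For a constant strong semilattice of groups this is immediate, since in a group $G_\alpha$ the only idempotent $e_\alpha$ is the identity, so every isomorphism $G_\alpha\to G_\beta$ automatically sends $e_\alpha$ to $e_\beta$, giving $\mathrm{Iso}(G_\alpha;G_\beta)=\mathrm{Iso}(G_\alpha;G_\beta)^{[e_\alpha;e_\beta]}$ and hence $\alpha\,\eta_S\,\beta\Leftrightarrow\alpha\,\upsilon_S\,\beta$. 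For a constant strong semilattice of rectangular bands, I would invoke Lemma~\ref{rb iso}: if $\psi:B_\alpha\to B_\beta$ is any isomorphism, write it as $\psi_L\times\psi_R$; given a prescribed idempotent $e_\alpha=(i,j)\in B_\alpha$ and $e_\beta=(k,\ell)\in B_\beta$, one may post-compose $\psi$ with the automorphism of $B_\beta$ induced by transpositions on the left-zero and right-zero factors sending $i\psi_L\mapsto k$ and $j\psi_R\mapsto\ell$; the result is an isomorphism $B_\alpha\to B_\beta$ carrying $e_\alpha$ to $e_\beta$. Thus $\mathrm{Iso}(B_\alpha;B_\beta)\neq\emptyset\iff\mathrm{Iso}(B_\alpha;B_\beta)^{[e_\alpha;e_\beta]}\neq\emptyset$, so again $\upsilon_S=\eta_S$. (This is essentially spelled out in the paragraph preceding the statement, so the proof can simply cite it.)

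With $\upsilon_S=\eta_S$ in hand, $Y/\upsilon_S=Y/\eta_S$, and therefore the structure $\mathcal{Y}$ appearing in Proposition~\ref{e's cat} is, once we choose the partition $Y_1,\dots,Y_r$ to be the $\eta_S$-classes, exactly $Y^S$. For the forward direction I assume $S$ is $\aleph_0$-categorical. Since $S$ is automorphism-pure (by \cite[Lemma IV.1.8]{Petrich99}, as noted, every strong semilattice of completely simple semigroups — in particular of groups or of rectangular bands — is automorphism-pure), Proposition~\ref{aut pure} applies and yields that $Y^S$ is $\aleph_0$-categorical with $Y/\eta_S$ finite, and that each component $S_\alpha$ is $\aleph_0$-categorical. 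In the rectangular band case the last conclusion is automatic anyway by Theorem~\ref{RB cat}, so it is dropped from the statement; in the group case it is retained as the parenthetical clause. For the converse I assume $Y^S$ is $\aleph_0$-categorical with $Y/\eta_S$ finite (and, in the group case, each $S_\alpha$ $\aleph_0$-categorical; in the band case each $B_\alpha$ is $\aleph_0$-categorical free of charge by Theorem~\ref{RB cat}). Then the hypotheses of Proposition~\ref{e's cat} are met with $\mathcal{Y}=Y^S$, and we conclude $S$ is $\aleph_0$-categorical.

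I do not anticipate a genuine obstacle: the corollary is a packaging of Propositions~\ref{aut pure} and \ref{e's cat} once $\upsilon_S=\eta_S$ is recorded. The one point requiring a little care is making sure the partition of $Y$ used to build $\mathcal{Y}$ in the hypotheses of Proposition~\ref{e's cat} can be taken to be the $\upsilon_S$-classes — but this is exactly how $\mathcal{Y}$ is set up there, and since $\upsilon_S=\eta_S$ it matches the $\eta_S$-classes used to define $Y^S$, so the two notions of $\aleph_0$-categoricity of the semilattice extension coincide verbatim. A brief remark that automorphism-purity of both classes is needed to invoke Proposition~\ref{aut pure}, together with the citation already given in the text, completes the argument.
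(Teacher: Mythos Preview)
Your proposal is correct and follows essentially the same approach as the paper: the paragraph preceding the corollary already records that $\upsilon_S=\eta_S$ in both cases (via the idempotent being the identity for groups, and via Lemma~\ref{rb iso} for rectangular bands) and that rectangular bands are automatically $\aleph_0$-categorical by Theorem~\ref{RB cat}, after which the result is declared immediate from Propositions~\ref{aut pure} and~\ref{e's cat}. Your write-up simply unpacks this in slightly more detail, including the explicit construction of the idempotent-matching isomorphism for rectangular bands, which the paper leaves implicit.
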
 

We now consider the $\aleph_0$-categoricity of a strong semilattice of semigroups $S=[Y;S_{\alpha};\psi_{\alpha,\beta}]$ such that each connecting morphism is injective.
For each $\alpha>\beta$ in $Y$, we abuse notation somewhat by denoting the isomorphism $\psi_{\alpha,\beta}^{-1}|_{\text{Im} \, \psi_{\alpha,\beta}}$ simply by $\psi_{\alpha,\beta}^{-1}$.
 We observe that if $\alpha>\beta>\gamma$ and $x_{\gamma}\in$ Im $\psi_{\alpha,\gamma}$, say $x_{\gamma}=x_{\alpha}\psi_{\alpha,\gamma}$, then 
\[ x_{\gamma} \psi_{\alpha,\gamma}^{-1}\psi_{\alpha,\beta} = x_{\alpha}\psi_{\alpha,\gamma} \psi_{\alpha,\gamma}^{-1}\psi_{\alpha,\beta} = x_{\alpha}\psi_{\alpha,\beta}=x_{\gamma}\psi_{\beta,\gamma}^{-1}. 
\] 
Hence, on the restricted domain Im $\psi_{\alpha,\gamma}$, we have 
\begin{equation} \label{inv conn}
\psi_{\alpha,\gamma}^{-1}\psi_{\alpha,\beta} =  \psi_{\beta,\gamma}^{-1}. 
\end{equation}
If $Y$ has a zero  (i.e. a minimum element under the natural order) we may define an equivalence relation $\xi_S$ on $Y$ by $\alpha \, \xi_S \, \beta$ if and only if $S_{\alpha}\psi_{\alpha,0}=S_\beta\psi_{\beta,0}$. 
If $\alpha \, \xi_S \, \beta$ then $\psi_{\alpha,0}\psi_{\beta,0}^{-1}$ is an isomorphism from $S_{\alpha}$ onto $S_{\beta}$, and so $\xi_S\subseteq \eta_S$.  

\begin{proposition} \label{inj cat} Let $S=[Y;S_{\alpha};\psi_{\alpha,\beta}]$ be such that each $\psi_{\alpha,\beta}$ is injective. 
Let $Y$ be a semilattice with zero and $Y/\xi_S=\{Y_1,\dots,Y_r\}$ be finite, with 
\[ \{S_\alpha\psi_{\alpha,0}:\alpha\in Y\}=\{T_1,\dots,T_r\}.
\]
 Then $S$ is $\aleph_0$-categorical if both $\mcal{Y}=(Y;Y_1,\dots,Y_r)$ and $\mcal{S}_0=(S_0;T_1,\dots,T_r)$ are $\aleph_0$-categorical. 
Moreover, if $S$ is automorphism-pure and $\aleph_0$-categorical, then conversely both $\mcal{Y}$ and $\mcal{S}_0$ are $\aleph_0$-categorical. 
\end{proposition}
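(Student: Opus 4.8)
The plan is to prove the forward implication by a direct ``finitely many choices'' argument in the spirit of the proof of Theorem~\ref{RB cat}, building automorphisms of $S$ out of an automorphism $\pi$ of $\mathcal{Y}$ and an automorphism $\theta_0$ of $\mathcal{S}_0$ via Theorem~\ref{iso strong}; and to prove the converse by combining the theory of pivoted p.r.c. subsets with passage to characteristic substructures. Throughout, for $\alpha\in Y$ write $T(\alpha)=S_\alpha\psi_{\alpha,0}$, so that (after relabelling the $T_k$ if necessary) $T(\alpha)=T_k$ exactly when $\alpha\in Y_k$, and $\psi_{\alpha,0}$ restricts to an isomorphism of $S_\alpha$ onto $T(\alpha)\leq S_0$.

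For the forward direction, fix $\pi\in\text{Aut}(\mathcal{Y})$ and $\theta_0\in\text{Aut}(\mathcal{S}_0)$. Since $\pi$ fixes each $Y_k$ setwise we have $T(\alpha\pi)=T(\alpha)$ for all $\alpha$, while $\theta_0$ fixes each $T_k$ setwise; hence
\[ \theta_\alpha:=\psi_{\alpha,0}\,\theta_0\,\psi_{\alpha\pi,0}^{-1}\colon S_\alpha\longrightarrow S_{\alpha\pi} \]
is a well-defined isomorphism for each $\alpha\in Y$ (the domain of $\psi_{\alpha\pi,0}^{-1}$ is $T(\alpha\pi)=T(\alpha)$, which contains the image of $\psi_{\alpha,0}\theta_0$). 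Using the identity $\psi_{\alpha,\beta}\psi_{\beta,0}=\psi_{\alpha,0}$ together with \eqref{inv conn}, one checks that each diagram $[\alpha,\beta;\alpha\pi,\beta\pi]$ commutes, so by Theorem~\ref{iso strong} the map $\theta=[\theta_\alpha,\pi]_{\alpha\in Y}$ is an automorphism of $S$. Now let $\underline{a}=(a^{(1)},\dots,a^{(n)})$ and $\underline{b}=(b^{(1)},\dots,b^{(n)})$ be $n$-tuples of $S$ with $a^{(s)}\in S_{\alpha_s}$ and $b^{(s)}\in S_{\beta_s}$, satisfying
\begin{enumerate}[label=(\arabic*), font=\normalfont]
\item $(\alpha_1,\dots,\alpha_n)\,\sim_{\mathcal{Y},n}\,(\beta_1,\dots,\beta_n)$;
\item $(a^{(1)}\psi_{\alpha_1,0},\dots,a^{(n)}\psi_{\alpha_n,0})\,\sim_{\mathcal{S}_0,n}\,(b^{(1)}\psi_{\beta_1,0},\dots,b^{(n)}\psi_{\beta_n,0})$.
\end{enumerate}
Choosing $\pi$ to witness (1) and $\theta_0$ to witness (2), the automorphism $\theta=[\theta_\alpha,\pi]_{\alpha\in Y}$ built above sends $a^{(s)}$ to $a^{(s)}\psi_{\alpha_s,0}\,\theta_0\,\psi_{\beta_s,0}^{-1}=b^{(s)}$ for each $s$ (using $\alpha_s\pi=\beta_s$ and condition (2)), so $\underline{a}\,\sim_{S,n}\,\underline{b}$. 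By the $\aleph_0$-categoricity of $\mathcal{Y}$ and of $\mathcal{S}_0$, conditions (1) and (2) have finitely many choices, and hence $S$ is $\aleph_0$-categorical by \cite[Lemma 2.8]{Quinncat} and the Ryll--Nardzewski Theorem.

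For the converse, suppose $S$ is automorphism-pure and $\aleph_0$-categorical, and let $\theta=[\theta_\gamma,\pi]_{\gamma\in Y}\in\text{Aut}(S)$; since $0\pi=0$, we have $\theta_0\in\text{Aut}(S_0)$ and $\theta|_{S_0}=\theta_0$. Applying the commuting diagram $\psi_{\alpha,0}\theta_0=\theta_\alpha\psi_{\alpha\pi,0}$ to all of $S_\alpha$ gives $T(\alpha\pi)=T(\alpha)\theta_0$; as $\{T(\alpha):\alpha\in Y\}=\{T_1,\dots,T_r\}$ is finite and $\theta_0$ is a bijection, $\theta_0$ permutes $\{T_1,\dots,T_r\}$, with $\pi$ permuting $\{Y_1,\dots,Y_r\}$ compatibly. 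Fix $\alpha_k\in Y_k$ and $x_k\in S_{\alpha_k}$ for each $k$. If $x_k\theta=x_j$ for some automorphism $\theta$ as above, then $x_k\theta\in S_{\alpha_k\pi}$ forces $\alpha_k\pi=\alpha_j$, whence $T_k\theta=T_k\theta_0=T(\alpha_k)\theta_0=T(\alpha_k\pi)=T_j$; thus $\{(T_k,x_k):1\leq k\leq r\}$ is a system of $1$-pivoted p.r.c. subsemigroups of $S$, and by \cite[Lemma 6.2]{Quinncat} the set extension $S'=(S;T_1,\dots,T_r)$ is $\aleph_0$-categorical. Since $S_0=S_0\psi_{0,0}\in\{T_1,\dots,T_r\}$, the subsemigroup $S_0$ is one of the named predicates of $S'$ and is therefore characteristic in $S'$; as each $T_k\subseteq S_0$, the induced substructure of $S'$ on $S_0$ is exactly $(S_0;T_1,\dots,T_r)=\mathcal{S}_0$, which consequently inherits $\aleph_0$-categoricity. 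Finally, every automorphism of $S'$ is an automorphism $[\theta_\gamma,\pi]$ of $S$ fixing each $T_k$, so by the identity $T(\alpha\pi)=T(\alpha)\theta_0$ its induced map $\pi$ fixes each $Y_k$, i.e. $\pi\in\text{Aut}(\mathcal{Y})$. Choosing $c_\alpha\in S_\alpha$ for each $\alpha\in Y$ and arguing as in the proof of Proposition~\ref{aut pure}, whenever $(c_{\alpha_1},\dots,c_{\alpha_n})\,\sim_{S',n}\,(c_{\beta_1},\dots,c_{\beta_n})$ we get $(\alpha_1,\dots,\alpha_n)\,\sim_{\mathcal{Y},n}\,(\beta_1,\dots,\beta_n)$, so $|Y^n/\sim_{\mathcal{Y},n}|\leq|S^n/\sim_{S',n}|<\aleph_0$ and $\mathcal{Y}$ is $\aleph_0$-categorical.

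I expect the main obstacle to lie in the forward direction. Although the constant case, Proposition~\ref{e's cat}, fits cleanly into the $(M;\bar N;\Psi)$-system framework of Section~\ref{new method}, that machinery does not apply here: the isomorphisms $\theta_\alpha$ attached to different components are not independent — via the commuting squares of Theorem~\ref{iso strong} they are all forced by the single automorphism $\theta_0$ of $S_0$ — so Condition (3.4) fails for the natural candidate for $\Psi$, and one must argue directly. The technical care then needed is in verifying that $\theta_0\in\text{Aut}(\mathcal{S}_0)$ and $\pi\in\text{Aut}(\mathcal{Y})$ really do assemble into a global automorphism of $S$: this is exactly where one uses that $\pi$ preserves the $\xi_S$-classes (so the domains $\text{Im}\,\psi_{\alpha\pi,0}=T(\alpha\pi)$ match those forced by $\theta_0$) and that $\theta_0$ preserves the $T_k$, together with the identity \eqref{inv conn} for the restricted inverse connecting maps.
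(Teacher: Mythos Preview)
Your forward direction is essentially identical to the paper's: you build $\theta_\alpha=\psi_{\alpha,0}\,\theta_0\,\psi_{\alpha\pi,0}^{-1}$, verify the commuting squares via \eqref{inv conn}, and count types against $\mathcal{Y}$ and $\mathcal{S}_0$. Your closing remark that the $(M;\bar N;\Psi)$-system framework does not apply here (because the $\theta_\alpha$ are all determined by the single $\theta_0$) is exactly right, and explains why both you and the paper argue this direction directly.

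For the converse you take a slightly different, more structural route than the paper. The paper works ``by hand'': it fixes $\gamma_k\in Y_k$ and $x_{\gamma_k}\in S_{\gamma_k}$, appends the pivots $x_{\gamma_1},\dots,x_{\gamma_r}$ to an arbitrary $n$-tuple, and observes that any automorphism of $S$ fixing these pivots must have $\theta_0$ fixing each $T_k$ and hence $\pi\in\text{Aut}(\mathcal{Y})$; this gives $|\mathcal{Y}^n/\!\sim_{\mathcal{Y},n}|\leq|S^{n+r}/\!\sim_{S,n+r}|$ and similarly for $\mathcal{S}_0$. You instead package the same observation as a $1$-pivoted p.r.c.\ system $\{(T_k,x_k)\}$, invoke \cite[Lemma~6.2]{Quinncat} to obtain $\aleph_0$-categoricity of $S'=(S;T_1,\dots,T_r)$, and then read off both conclusions: $\mathcal{S}_0$ as a characteristic substructure of $S'$ (using that $S_0$ itself is one of the $T_k$), and $\mathcal{Y}$ via the induced semilattice map of automorphisms of $S'$. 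This is correct and arguably cleaner, since it makes explicit that the ``append pivots'' trick in the paper is precisely the content of the p.r.c.\ machinery; the cost is that the reader must check that the induced substructure of $S'$ on $S_0$ really is $\mathcal{S}_0$, which you note but could state more explicitly.
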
 

\begin{proof}
Suppose first that both $\mcal{Y}$ and $\mcal{S}_0$ are $\aleph_0$-categorical.  Let $\overline{a}=(a_{\alpha_1},\dots,a_{\alpha_n})$ and $\overline{b}=(b_{\beta_1},\dots,b_{\beta_n})$ be $n$-tuples of $S$ with $(\alpha_1,\dots,\alpha_n) \, \sim_{\mcal{Y},n} \, (\beta_1,\dots,\beta_n)$ via $\pi\in \text{Aut}(\mcal{Y})$, say. 
Suppose further that 
\[ (a_{\alpha_1}\psi_{\alpha_1,0},\dots,a_{\alpha_n}\psi_{\alpha_n,0}) \, \sim_{\mcal{S}_0,n} \, (b_{\beta_1}\psi_{\beta_1,0},\dots,b_{\beta_n}\psi_{\beta_n,0})
\] 
 via $\theta_0\in \text{Aut}(\mcal{S}_0)$, say. 
 Then for each $\alpha\in Y$ we have $S_\alpha\psi_{\alpha,0}=S_{\alpha\pi}\psi_{\alpha\pi,0}$, and so we can take an isomorphism $\theta_{\alpha}\colon S_{\alpha}\rightarrow S_{\alpha\pi}$ given by 
\[ \theta_\alpha=\psi_{\alpha,0} \, \theta_0 \, \psi_{\alpha\pi,0}^{-1}.
\] 
For each $\alpha\geq \beta$ in $Y$, the diagram $[\alpha,\beta;\alpha\pi,\beta\pi]$ commutes as  
\begin{align*}
  \psi_{\alpha,\beta} \,  {\theta}_{\beta} & =   \psi_{\alpha,\beta} \, ( \psi_{\beta,0} \, \theta_0 \, \psi_{\beta\pi,0}^{-1}) =    {\psi}_{\alpha,0} \, {\theta}_0 \, \psi_{\beta\pi,0}^{-1} \\
 & =    {\psi}_{\alpha,0} \, {\theta}_0 \, ({\psi}_{\alpha\pi,0}^{-1} \,  {\psi}_{\alpha\pi,\beta\pi}) = {\theta}_{\alpha} \, \psi_{\alpha\pi,\beta\pi},
 \end{align*}  
where the penultimate equality is due to \eqref{inv conn} as Im $\psi_{\alpha\pi,0} =$ Im $\psi_{\alpha,0} = $ (Im $\psi_{\alpha,0})\theta_0$. 
Hence $\theta=[\theta_{\alpha},\pi]_{\alpha\in Y}$ is an automorphism of $S$ by Theorem \ref{iso strong}. 
Furthermore, 
\[ a_{\alpha_k}\theta=a_{\alpha_k}\theta_{\alpha_k} = a_{\alpha_k} \psi_{\alpha_k,0} \, \theta_0 \, \psi_{\alpha_k\pi,0}^{-1} = b_{\beta_k} \psi_{\beta_k,0} \, \psi_{\beta_k,0}^{-1} = b_{\beta_k} 
\] 
for each $1\leq k \leq n$, so that $\overline{a} \, \sim_{S,n} \, \overline{b}$ via $ \theta$. We thus have that 
\[ |S^n/\sim_{S,n}| \leq |\mcal{Y}^n/\sim_{\mcal{Y},n}| \cdot |\mcal{S}_0^n/\sim_{\mcal{S}_0,n}|<\aleph_0
\] 
and so $S$ is $\aleph_0$-categorical. 

Conversely, suppose $S$ is automorphism-pure and $\aleph_0$-categorical.  For each $1\leq k \leq r$, fix some $\gamma_k\in  Y_k$, where we assume without loss of generality that $S_{\gamma_k}\psi_{\gamma_k,0}=T_k$. 
For each $\alpha\in Y$, fix some $x_{\alpha}\in S_{\alpha}$. Let $\overline{a}=(\alpha_1,\dots,\alpha_n)$ and $\overline{b}=(\beta_1,\dots,\beta_n)$  be $n$-tuples of $Y$ such that 
\[ (x_{\alpha_1},\dots,x_{\alpha_n}, x_{\gamma_1},\dots,x_{\gamma_r}) \, \sim_{S,n+r}  \, (x_{\beta_1},\dots,x_{\beta_n}, x_{\gamma_1},\dots,x_{\gamma_r}),
\] 
via $\theta\in \text{Aut}(S)$, say. 
Since $S$ is automorphism-pure there exists $\pi\in \text{Aut}(Y)$ and $\theta_{\alpha}\in \text{Iso}(S_{\alpha};S_{\alpha\pi})$ such that $\theta=[\theta_{\alpha},\pi]_{\alpha\in Y}$. 
The automorphism $\pi$ fixes each $\gamma_k$, so that $S_{\gamma_k}\theta=S_{\gamma_k}$. 
Hence, as the diagram $[\gamma_k,0;\gamma_k,0]$ commutes for each $k$, we have
 \[ T_k = S_{\gamma_k}\psi_{\gamma_k,0}=(S_{\gamma_k}\theta_{\gamma_k})\psi_{\gamma_k,0}=S_{\gamma_k}\psi_{\gamma_k,0}\theta_0=T_k\theta_0=T_k\theta.
 \] 
If $\alpha\in Y_k$ then, by the commutativity of the diagram $[\alpha;0;\alpha\pi,0]$, we therefore have 
\[ S_{\alpha}\psi_{\alpha,0} = T_k= T_k\theta_0=S_{\alpha}\psi_{\alpha,0}\theta_0 = S_{\alpha}\theta_{\alpha}\psi_{\alpha\pi,0} = S_{\alpha\pi}\psi_{\alpha\pi,0}, 
\]
 and so $\pi\in \text{Aut}(\mcal{Y})$. 
 We have shown that 
\[ |\mcal{Y}^n/\sim_{\mcal{Y},n}| \leq |S^{n+r}/\sim_{S,n+r}|<\aleph_0
\] 
and so $\mcal{Y}$ is $\aleph_0$-categorical. 
Now suppose $\overline{c}$ and $\overline{d}$ are $n$-tuples of $\mcal{S}_0$ such that 
\[ (\overline{c},x_{\gamma_1},\dots,x_{\gamma_r}) \, \sim_{S,n+r} \, (\overline{d},x_{\gamma_1},\dots,x_{\gamma_r}),  
\] 
via $\theta'=[\theta_{\alpha}',\pi']_{\alpha\in Y}\in \text{Aut}(S)$, say. 
Then arguing as before we have that $T_k\theta'=T_k$ for each $k$, and it follows that  $\theta_0'\in \text{Aut}(\mcal{S}_0)$, with $\overline{c}\theta_0'=\overline{d}$. Hence 
\[ |\mcal{S}_0^n/\sim_{\mcal{S}_0,n}|\leq |S^{n+r}/\sim_{S,n+r}|<\aleph_0
\] 
and so $\mcal{S}_0$ is $\aleph_0$-categorical.  
\end{proof}

Note that if $Y$ is finite, then the meet of all the elements of $Y$ is a zero. Moreover, as $Y$ is finite, it is $\aleph_0$-categorical over any set of subsets by the RNT, and so the result above simplifies accordingly in this case: 

 \begin{corollary}  Let $S=[Y;S_{\alpha};\psi_{\alpha,\beta}]$ be such that $Y$ is finite and each $\psi_{\alpha,\beta}$ is injective. 
 If  $\mcal{S}_0=({S}_0;\{S_{\alpha}\psi_{\alpha,0}:\alpha\in Y\})$ is $\aleph_0$-categorical  then $S$ is $\aleph_0$-categorical. 
Conversely, if $S$ is automorphism-pure and $\aleph_0$-categorical then $\mcal{S}_0$ is $\aleph_0$-categorical.
\end{corollary}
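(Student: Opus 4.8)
The plan is to derive this Corollary directly from Proposition~\ref{inj cat} by checking that the finiteness hypotheses there are automatically satisfied when $Y$ is finite. First I would observe, as the paragraph preceding the statement already notes, that a finite semilattice $Y$ has a minimum element $0 = \bigwedge Y$, so $Y$ genuinely is a semilattice with zero and Proposition~\ref{inj cat} applies. Then, since $Y$ is finite, $\xi_S$ is an equivalence relation on a finite set, so $Y/\xi_S = \{Y_1,\dots,Y_r\}$ is finite, and correspondingly $\{S_\alpha\psi_{\alpha,0} : \alpha\in Y\} = \{T_1,\dots,T_r\}$ is a finite list of subsemigroups of $S_0$. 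This is exactly the data required to invoke Proposition~\ref{inj cat}.

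Next I would handle the two directions. For the forward direction, suppose $\mathcal{S}_0 = (S_0; \{S_\alpha\psi_{\alpha,0}:\alpha\in Y\}) = (S_0; T_1,\dots,T_r)$ is $\aleph_0$-categorical; this is precisely the hypothesis ``$\mathcal{S}_0$ is $\aleph_0$-categorical'' of Proposition~\ref{inj cat}. It remains to verify the other hypothesis there, namely that $\mathcal{Y} = (Y; Y_1,\dots,Y_r)$ is $\aleph_0$-categorical. But $\mathcal{Y}$ is a finite structure (its underlying set $Y$ is finite), and any finite structure is $\aleph_0$-categorical by the Ryll--Nardzewski Theorem: $\mathrm{Aut}(\mathcal{Y})$ acts on the finite set $\mathcal{Y}^n$, so it has only finitely many orbits for each $n$. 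Hence Proposition~\ref{inj cat} yields that $S$ is $\aleph_0$-categorical. For the converse, suppose $S$ is automorphism-pure and $\aleph_0$-categorical; then Proposition~\ref{inj cat} gives directly that $\mathcal{S}_0$ is $\aleph_0$-categorical (the conclusion that $\mathcal{Y}$ is also $\aleph_0$-categorical is automatic and uninteresting here since $Y$ is finite, so we simply discard it).

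I do not expect any genuine obstacle: the Corollary is a straightforward specialisation of Proposition~\ref{inj cat}, and the only points to articulate carefully are (i) the existence of the zero $0=\bigwedge Y$ in a finite semilattice, so that the ``$Y$ has a zero'' hypothesis of Proposition~\ref{inj cat} is met, and (ii) the remark that finiteness of $Y$ forces both $Y/\xi_S$ to be finite and $\mathcal{Y}$ to be $\aleph_0$-categorical, the latter by the RNT as recorded in the sentence introducing the Corollary. The mildest subtlety is purely notational: Proposition~\ref{inj cat} writes $\mathcal{S}_0 = (S_0; T_1,\dots,T_r)$ with the $T_k$ listed as a finite enumeration of $\{S_\alpha\psi_{\alpha,0}:\alpha\in Y\}$, whereas the Corollary writes $\mathcal{S}_0 = (S_0;\{S_\alpha\psi_{\alpha,0}:\alpha\in Y\})$; these denote the same set extension (a set extension by a finite collection of subsets does not depend on the order of listing), so no work is needed to reconcile them.
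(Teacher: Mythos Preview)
Your proposal is correct and matches the paper's approach exactly: the paper presents this corollary as an immediate specialisation of Proposition~\ref{inj cat}, noting in the preceding paragraph that a finite $Y$ has a zero (the meet of all its elements) and that finiteness of $Y$ makes $\mathcal{Y}$ automatically $\aleph_0$-categorical by the RNT. Your additional remarks about $Y/\xi_S$ being finite and the notational reconciliation of $\mathcal{S}_0$ are accurate and in the same spirit.
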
 

For a Clifford semigroup $S$, the property that the connecting morphisms are injective is equivalent to $S$ being is $E$-unitary, that is, such that for all $e\in E(S)$ and all $s\in S$, if $es\in E$ then $s\in E(S)$ \cite[Exercise 5.20]{Howie94}. Since Clifford semigroups are automorphism-pure, we therefore have the following simplification of Proposition \ref{inj cat}. 

\begin{corollary}\label{E-un} Let $S=[Y;G_{\alpha};\psi_{\alpha,\beta}]$ be an $E$-unitary Clifford semigroup. Let $Y$ be a semilattice with zero and $Y/\xi_S$ be finite.  
Then $S$ is $\aleph_0$-categorical if and only if  $(Y;Y/\xi_S)$  and $(S_0; \{S_\alpha\psi_{\alpha,0}:\alpha\in Y\})$ are $\aleph_0$-categorical. 
In particular, if $Y$ is finite then $S$ is $\aleph_0$-categorical if and only if $(S_0; \{S_\alpha\psi_{\alpha,0}:\alpha\in Y\})$ is $\aleph_0$-categorical. 
\end{corollary}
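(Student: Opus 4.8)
The plan is to read this statement off from Proposition \ref{inj cat}, after translating the $E$-unitary hypothesis into the language used there. First I would recall the standard fact \cite[Exercise 5.20]{Howie94} that a Clifford semigroup $S=[Y;G_{\alpha};\psi_{\alpha,\beta}]$ is $E$-unitary precisely when every connecting morphism $\psi_{\alpha,\beta}$ is injective. Thus the hypotheses of Proposition \ref{inj cat} are met: $Y$ is a semilattice with zero, each $\psi_{\alpha,\beta}$ is injective, and $Y/\xi_S=\{Y_1,\dots,Y_r\}$ is finite. Since by definition $\alpha\,\xi_S\,\beta$ if and only if $S_\alpha\psi_{\alpha,0}=S_\beta\psi_{\beta,0}$, the family $\{S_\alpha\psi_{\alpha,0}:\alpha\in Y\}$ consists of exactly $r$ distinct subgroups of $S_0$, say $\{T_1,\dots,T_r\}$, so the structure $\mathcal{S}_0=(S_0;T_1,\dots,T_r)$ of Proposition \ref{inj cat} is precisely $(S_0;\{S_\alpha\psi_{\alpha,0}:\alpha\in Y\})$, and $\mathcal{Y}=(Y;Y_1,\dots,Y_r)$ is precisely $(Y;Y/\xi_S)$.

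With this bookkeeping in place, the first half of Proposition \ref{inj cat} gives that $S$ is $\aleph_0$-categorical whenever $(Y;Y/\xi_S)$ and $(S_0;\{S_\alpha\psi_{\alpha,0}:\alpha\in Y\})$ are. For the converse I would invoke the fact, noted earlier in the paper (a strong semilattice of completely simple semigroups is automorphism-pure by \cite[Lemma IV.1.8]{Petrich99}, hence so is every Clifford semigroup), that $S$ is automorphism-pure. The ``moreover'' clause of Proposition \ref{inj cat} then applies, forcing both $(Y;Y/\xi_S)$ and $(S_0;\{S_\alpha\psi_{\alpha,0}:\alpha\in Y\})$ to be $\aleph_0$-categorical once $S$ is. This yields the stated equivalence.

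For the final assertion I would observe that a finite semilattice $Y$ has a zero, namely the product of all its elements, that $Y/\xi_S$ is then automatically finite, and that a set extension of the finite structure $Y$ by finitely many subsets is again finite, hence $\aleph_0$-categorical by the RNT. The condition on $(Y;Y/\xi_S)$ is therefore vacuous, and the equivalence collapses to: $S$ is $\aleph_0$-categorical if and only if $(S_0;\{S_\alpha\psi_{\alpha,0}:\alpha\in Y\})$ is. There is no real obstacle here, since every ingredient is already established; the only point demanding a moment's care is the identification of $\{T_1,\dots,T_r\}$ with $\{S_\alpha\psi_{\alpha,0}:\alpha\in Y\}$, ensuring Proposition \ref{inj cat} can be quoted directly.
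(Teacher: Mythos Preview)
Your proposal is correct and mirrors the paper's own treatment: the paper simply remarks that for Clifford semigroups $E$-unitary is equivalent to all connecting morphisms being injective \cite[Exercise 5.20]{Howie94} and that Clifford semigroups are automorphism-pure, then states the corollary as an immediate specialisation of Proposition~\ref{inj cat} (with the finite-$Y$ case handled exactly as you describe, via the preceding corollary). Your careful identification of $\{T_1,\dots,T_r\}$ with $\{S_\alpha\psi_{\alpha,0}:\alpha\in Y\}$ is the only bookkeeping the paper leaves implicit.
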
 

\begin{example} We use the work of Apps \cite{Apps82} to construct examples of  $\aleph_0$-categorical $E$-unitary Clifford semigroups as follows. 
Let $G$ be an $\aleph_0$-categorical group and $H_1<H_2<\cdots$ a characteristic series in $G$, so that each $H_i$ is a characteristic subgroup of $G$ and $H_i$ is a subgroup of $H_{i+1}$. 
Apps proved that such a series must be finite, and there exists a characteristic series $\{1\}=G_0<G_1<G_2<\cdots <G_n=G$ with each $G_i/G_{i-1}$ a characteristically simple $\aleph_0$-categorical group. 
For each $0\leq i \leq n$, let $K_i=G_i\times \{i\}$ be an isomorphic copy of $G_i$.  
For each ${0}\leq {i}  \leq j \leq  {n}$, let $\psi_{i,j}\colon K_i\rightarrow K_{j}$ be the map given by $(x,i)\psi_{i,j}=(x,j)$. 
Then we may form a strong semilattice of the groups $K_i$ by taking $S=[Y;K_i;\psi_{i,j}]$, where $Y$ is the set $\{0, 1,\dots,n\}$ with the reverse ordering $0>1>2>\cdots >n$. 
Notice that $S$ is $E$-unitary as each connecting morphism is injective. Moreover, each $K_i\psi_{i,n}=G_i\times \{n\}$ is a characteristic subgroup of $K_n=G_n\times \{n\}$. 
Hence, by Lemma \ref{over char 2}, $(K_n;\{K_i\psi_{i,n}:1\leq i \leq n \})$ is $\aleph_0$-categorical. 
Since $Y$ is finite, we have that $(Y;Y/\xi_S)$ is $\aleph_0$-categorical, and so $S$ is $\aleph_0$-categorical by Corollary \ref{E-un}. 
\end{example} 

If $S=[Y;S_{\alpha};\psi_{\alpha,\beta}]$ is such that each connecting morphism is an isomorphism, then $Y/\xi_S=\{Y\}$, and so the result above simplifies accordingly. 
However we can  prove a more general result directly (without the condition that $Y$ has a zero) with aid of the following proposition. The result is folklore, but a proof can be found in \cite{Quinn}. 

\begin{proposition}\label{iso state} Let $S=[Y;S_{\alpha};\psi_{\alpha,\beta}]$ be such that each $\psi_{\alpha,\beta}$ is an isomorphism. 
Then  $S\cong S_{\alpha} \times Y$ for any $\alpha\in Y$.  
Conversely, if $T$ is a semigroup and $Z$ is a semilattice then $T\times Z$ is isomorphic to a strong semilattice of semigroups such that each connecting morphism is an isomorphism. 
\end{proposition}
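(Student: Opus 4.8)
The plan is to treat the two assertions separately: the first by exhibiting an explicit isomorphism, the second by a direct construction.

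\textbf{Forward direction.} Fix $\alpha\in Y$. Since $Y$ need not have a greatest element, an arbitrary component $S_\beta$ cannot be connected to $S_\alpha$ directly, so I route through the meet $\alpha\beta$: for each $\beta\in Y$ put
\[
\theta_\beta=\psi_{\beta,\alpha\beta}\,\psi_{\alpha,\alpha\beta}^{-1}\colon S_\beta\longrightarrow S_\alpha,
\]
which is an isomorphism as a composite of isomorphisms (and equals $1_{S_\alpha}$ when $\beta=\alpha$), and define $\Theta\colon S\to S_\alpha\times Y$ by $s_\beta\Theta=(s_\beta\theta_\beta,\beta)$ for $s_\beta\in S_\beta$. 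Since the components $S_\beta$ partition $S$, each $\theta_\beta$ is a bijection, and the second coordinate recovers $\beta$, the map $\Theta$ is a bijection; the work is in checking it is a homomorphism.

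For $a_\beta\in S_\beta$ and $b_\gamma\in S_\gamma$ we have $a_\beta*b_\gamma=(a_\beta\psi_{\beta,\beta\gamma})(b_\gamma\psi_{\gamma,\beta\gamma})\in S_{\beta\gamma}$; applying $\Theta$ and using that $\theta_{\beta\gamma}$ is a homomorphism, the identity $(a_\beta*b_\gamma)\Theta=(a_\beta\Theta)(b_\gamma\Theta)$ in $S_\alpha\times Y$ reduces (the second coordinates both being $\beta\gamma$) to the two symmetric equalities $\psi_{\beta,\beta\gamma}\,\theta_{\beta\gamma}=\theta_\beta$ and $\psi_{\gamma,\beta\gamma}\,\theta_{\beta\gamma}=\theta_\gamma$ of maps into $S_\alpha$. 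I would prove the first (the second being identical after swapping $\beta$ and $\gamma$) by expanding $\theta_{\beta\gamma}=\psi_{\beta\gamma,\alpha\beta\gamma}\psi_{\alpha,\alpha\beta\gamma}^{-1}$, contracting $\psi_{\beta,\beta\gamma}\psi_{\beta\gamma,\alpha\beta\gamma}=\psi_{\beta,\alpha\beta\gamma}$ by the composition law (valid since $\beta\ge\beta\gamma\ge\alpha\beta\gamma$), and then factoring both $\psi_{\beta,\alpha\beta\gamma}$ and $\psi_{\alpha,\alpha\beta\gamma}$ through $\alpha\beta$ (using $\beta\ge\alpha\beta\ge\alpha\beta\gamma$ and $\alpha\ge\alpha\beta\ge\alpha\beta\gamma$); the two $\psi_{\alpha\beta,\alpha\beta\gamma}$ factors then cancel, leaving $\psi_{\beta,\alpha\beta}\psi_{\alpha,\alpha\beta}^{-1}=\theta_\beta$. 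Commutativity and associativity of the meet in $Y$ are exactly what make these indices align, so $\Theta$ is an isomorphism and $S\cong S_\alpha\times Y$.

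\textbf{Converse.} Given a semigroup $T$ and a semilattice $Z$, set $S_\alpha=T\times\{\alpha\}$ for $\alpha\in Z$ (pairwise disjoint copies of $T$), and for $\alpha\ge\beta$ let $\psi_{\alpha,\beta}\colon T\times\{\alpha\}\to T\times\{\beta\}$, $(t,\alpha)\mapsto(t,\beta)$. Each $\psi_{\alpha,\beta}$ is an isomorphism, $\psi_{\alpha,\alpha}=1_{S_\alpha}$, and $\psi_{\alpha,\beta}\psi_{\beta,\gamma}=\psi_{\alpha,\gamma}$, so $[Z;S_\alpha;\psi_{\alpha,\beta}]$ is a strong semilattice of semigroups all of whose connecting morphisms are isomorphisms. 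Computing the product, $(a,\alpha)*(b,\beta)=((a,\alpha)\psi_{\alpha,\alpha\beta})((b,\beta)\psi_{\beta,\alpha\beta})=(ab,\alpha\beta)$, which is precisely the product in $T\times Z$; hence $[Z;S_\alpha;\psi_{\alpha,\beta}]$ is $T\times Z$ (on identifying $T\times\{\alpha\}$ with the corresponding subset of $T\times Z$). I expect the only genuine effort to be the index bookkeeping in the homomorphism check of the forward direction; the rest is formal, and no real obstacle is anticipated.
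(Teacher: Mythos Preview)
Your proof is correct. The paper itself does not give a proof of this proposition: it states that ``the result is folklore, but a proof can be found in \cite{Quinn}'' (the author's thesis), so there is no in-text argument to compare against. That said, your construction---routing each component to the fixed $S_\alpha$ via the meet $\alpha\beta$ and reducing the homomorphism check to the identity $\psi_{\beta,\beta\gamma}\theta_{\beta\gamma}=\theta_\beta$, established by factoring both $\psi_{\beta,\alpha\beta\gamma}$ and $\psi_{\alpha,\alpha\beta\gamma}$ through $\alpha\beta$---is exactly the standard argument one expects for this folklore fact, and your converse is the obvious direct computation.
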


\begin{corollary} Let $S=[Y;S_{\alpha};\psi_{\alpha,\beta}]$ be such that each $\psi_{\alpha,\beta}$ is an isomorphism. 
If $S_{\alpha}$ and $Y$ are $\aleph_0$-categorical, then $S$ is $\aleph_0$-categorical. Moreover, if $S$ is automorphism-pure then the converse holds. 
\end{corollary}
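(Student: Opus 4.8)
The plan is to reduce the statement to two results already available, namely Proposition \ref{iso state}, which gives $S\cong S_\alpha\times Y$ (as a semigroup, with $Y$ regarded as a semilattice) for any fixed $\alpha\in Y$, and Proposition \ref{aut pure}.

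For the forward direction I would argue that a direct product of two $\aleph_0$-categorical structures is $\aleph_0$-categorical and then apply this to $S_\alpha\times Y$. Via the RNT: each pair $(\phi,\pi)\in\aut(S_\alpha)\times\aut(Y)$ yields an automorphism $(a,\beta)\mapsto(a\phi,\beta\pi)$ of $S_\alpha\times Y$, so $\aut(S_\alpha)\times\aut(Y)$ embeds as a subgroup $H\leq\aut(S_\alpha\times Y)$ acting coordinatewise. Identifying $(S_\alpha\times Y)^n$ with $S_\alpha^n\times Y^n$, the $H$-orbits are precisely the products of an $\aut(S_\alpha)$-orbit on $S_\alpha^n$ with an $\aut(Y)$-orbit on $Y^n$; both orbit sets are finite since $S_\alpha$ and $Y$ are $\aleph_0$-categorical, so $H$ is oligomorphic, hence so is the larger group $\aut(S_\alpha\times Y)$ (its orbits are unions of $H$-orbits), and $S$ is $\aleph_0$-categorical. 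This preservation under finite direct products is routine and, if preferred, could instead be cited from \cite{Quinncat}.

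For the converse, assume $S$ is automorphism-pure and $\aleph_0$-categorical. Then Proposition \ref{aut pure} gives at once that each $S_\alpha$ is $\aleph_0$-categorical and that $Y^S=(Y;Y/\eta_S)$ is $\aleph_0$-categorical. Since every $\psi_{\alpha,\beta}$ is an isomorphism, all components $S_\alpha$ are mutually isomorphic, so $\eta_S=Y\times Y$ and $Y/\eta_S=\{Y\}$; thus $Y^S=(Y;Y)$, and adjoining the whole underlying set as a unary relation leaves the automorphism group unchanged, so $\aut(Y^S)=\aut(Y)$. Hence $Y$ is $\aleph_0$-categorical, as required.

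I expect no serious obstacle here: the statement is a clean corollary of Propositions \ref{iso state} and \ref{aut pure}. The only step needing a genuine (though short) argument is the stability of $\aleph_0$-categoricity under finite direct products invoked in the forward direction, which is handled by the subgroup argument above.
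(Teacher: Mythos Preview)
Your proposal is correct and follows essentially the same route as the paper: reduce the forward direction to $S\cong S_\alpha\times Y$ via Proposition~\ref{iso state} and invoke preservation of $\aleph_0$-categoricity under finite direct products (the paper cites \cite{Grzeg} rather than proving it or citing \cite{Quinncat}), and derive the converse from Proposition~\ref{aut pure}. Your treatment of the converse is slightly more explicit than the paper's---you note $\eta_S$ is universal so $\aut(Y^S)=\aut(Y)$, whereas the paper simply observes that $\aleph_0$-categoricity of $(Y;Y/\eta_S)$ implies that of $Y$---but the substance is the same.
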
 

\begin{proof} By Proposition \ref{iso state}, $S$ is isomorphic to $S_{\alpha}\times Y$ for any $\alpha\in Y$. 
The first half of the result then follows as $\aleph_0$-categoricity is preserved by finite direct products \cite{Grzeg}. 

If $S$ is automorphism-pure then the converse holds by Proposition \ref{aut pure}, as $(Y;Y/\eta_S)$ being $\aleph_0$-categorical clearly implies $Y$ is $\aleph_0$-categorical.   
\end{proof}

\section{$\aleph_0$-categorical Rees matrix semigroups} \label{Sec rees} 

 A semigroup $S$ is called simple (0-simple) if it has no proper ideals (if its only proper ideal is $\{0\}$ and $S^2\neq \{0\}$). 
 A simple (0-simple) semigroup is called \textit{completely simple} (\textit{completely 0-simple}) if contains a primitive idempotent, i.e. a non-zero idempotent $e$ such that for any non-zero idempotent $f$ of $S$, 
\[ ef=fe=f \Rightarrow e=f. 
\] 
Since an $\aleph_0$-categorical semigroup is periodic, it follows that every $\aleph_0$-categorical (0-)simple semigroup is completely (0)-simple (see the proof of Theorem 3.12 of \cite{Quinncat}). 
By Rees Theorem \cite{Rees}, to study the $\aleph_0$-categoricity of a completely 0-simple semigroup, it is sufficient to consider Rees matrix semigroups: 

\begin{theorem}[The Rees Theorem] Let $G$ be a group, let $I$ and $\Lambda$ be non-empty index sets and let $P=(p_{\lambda,i})$ be an $\Lambda \times I$ matrix with entries in $G\cup \{0\}$. 
Suppose no row or column of $P$ consists entirely of zeros (that is, $P$ is \textit{regular}). Let $S=(I\times G \times \Lambda) \cup \{0\}$, and define multiplication $*$ on $S$ by 
\begin{align*}  & (i,g,\lambda)*(j,h,\mu) = \left\{
\begin{array}{ll}
(i,g p_{\lambda, j} h,\mu)  & \text{if       }  p_{\lambda, j}\neq 0\\
0 & \text{else }  
\end{array}
\right. \\
& 0*(i,g,\lambda) =(i,g,\lambda)*0=0*0=0.
\end{align*} 
Then $S$ is a completely 0-simple semigroup, denoted $\mcal{M}^0[G;I,\Lambda;P]$, and is called a (regular) Rees matrix semigroup (over $G$). 
Conversely, every completely 0-simple semigroup is isomorphic to a Rees matrix semigroup.  
\end{theorem}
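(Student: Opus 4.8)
The plan is to treat the two directions of the theorem separately.

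\textbf{$\mathcal{M}^0[G;I,\Lambda;P]$ is completely 0-simple.} First I would check that $*$ is associative --- a routine case analysis according to whether the relevant entries $p_{\lambda,j},p_{\mu,k}$ of $P$ vanish --- so that $S$ is a semigroup with zero. Regularity of $P$ supplies a nonzero entry $p_{\lambda,i}$, whence $(i,1_G,\lambda)*(i,1_G,\lambda)=(i,p_{\lambda,i},\lambda)\neq 0$ and so $S^2\neq\{0\}$. To see that $\{0\}$ is the only proper ideal, let $J$ be an ideal containing a nonzero $(i,g,\lambda)$ and let $(j,h,\mu)$ be arbitrary; since no row or column of $P$ is zero, pick $\nu,k$ with $p_{\nu,i}\neq 0$ and $p_{\lambda,k}\neq 0$, and solve the group equation $a\,p_{\nu,i}\,g\,p_{\lambda,k}=h$ for $a\in G$. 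Then $(j,a,\nu)*(i,g,\lambda)*(k,1_G,\mu)=(j,h,\mu)$, forcing $(j,h,\mu)\in J$, so $J=S$ and $S$ is $0$-simple. Finally $(i,g,\lambda)$ is idempotent precisely when $p_{\lambda,i}\neq 0$ and $g=p_{\lambda,i}^{-1}$; given two such nonzero idempotents $e,f$ with $ef=fe=f$, the product $ef$ must be nonzero (being $f$), which rules out the zero case, and comparing coordinates in $ef=f$ and $fe=f$ forces equality of the first and of the last coordinates, so $e=f$. Hence every nonzero idempotent is primitive and $S$ is completely 0-simple.

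\textbf{The converse.} Let $S$ be an arbitrary completely 0-simple semigroup; I would use the classical coordinatization by Green's relations. The preliminary structural facts I would establish are: $S$ is regular, $\mathcal{D}=\mathcal{J}$ on $S$, the set $D=S\setminus\{0\}$ is a single $\mathcal{D}$-class in which every $\mathcal{R}$-class and every $\mathcal{L}$-class contains an idempotent, a nonzero $\mathcal{H}$-class is a group if it contains an idempotent and otherwise squares to $\{0\}$, and for $a,b\in D$ the product $ab$ lies in $R_a\cap L_b$ when $L_a\cap R_b$ contains an idempotent and equals $0$ otherwise. Granting these, I fix a nonzero idempotent $e$, put $G=H_e$ (a group with identity $e$), let $I$ index the $\mathcal{R}$-classes and $\Lambda$ the $\mathcal{L}$-classes of $D$ with $e\in R_1\cap L_1$, and by Green's lemma choose representatives $r_i\in R_i\cap L_1$ and $q_\lambda\in R_1\cap L_\lambda$ with $r_1=q_1=e$. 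Setting $p_{\lambda,i}=q_\lambda r_i\in G\cup\{0\}$, the matrix $P=(p_{\lambda,i})$ is regular because each $\mathcal{L}$-class and each $\mathcal{R}$-class of $D$ carries an idempotent. One then shows every $a\in R_i\cap L_\lambda$ is uniquely $a=r_i\,g\,q_\lambda$ for some $g\in G$, so that $a\mapsto(i,g,\lambda)$ and $0\mapsto 0$ define a bijection onto $\mathcal{M}^0[G;I,\Lambda;P]$, and verifies that it is a homomorphism.

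\textbf{Main obstacle.} The crux is this last step of the converse: checking that the coordinate map respects products. Expanding $(r_i g q_\lambda)(r_j h q_\mu)$, the inner factor $q_\lambda r_j$ is exactly $p_{\lambda,j}$, and the book-keeping must show this is $0$ precisely when $p_{\lambda,j}=0$ --- giving product $0$ on both sides --- and otherwise collapses, via Green's lemma (right translations acting bijectively on $\mathcal{R}$-classes within $D$, left translations on $\mathcal{L}$-classes), to $r_i\,(g\,p_{\lambda,j}\,h)\,q_\mu$, matching the Rees product. Arranging the representatives $r_i,q_\lambda$ to interact coherently --- in particular that $r_i g q_\lambda$ genuinely ranges over all of $R_i\cap L_\lambda$ and that $q_\lambda r_i$ returns to $G$ whenever it is nonzero --- is where the location lemma for Green's relations does the essential work; the rest is routine verification.
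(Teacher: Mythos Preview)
The paper does not prove this statement: it is quoted as the classical Rees Theorem with a citation to \cite{Rees} (and implicitly to the account in \cite{Howie94}), and no argument is supplied. Your proposal is a faithful outline of the standard textbook proof --- the forward direction by direct verification and the converse via coordinatization by Green's relations and Green's lemma --- so there is nothing in the paper to compare it against; your sketch is correct and is essentially the proof one finds in Howie's book.
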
 

The matrix $P$ is called the \textit{sandwich matrix} of $S$. If $P$ has no zero entries, then $I\times G \times \Lambda$ forms a subsemigroup of $\mcal{M}^0[G;I,\Lambda,P]$, called a \textit{Rees matrix semigroup without zero} and denoted $\mcal{M}[G;I,\Lambda;P]$. 
Every  completely simple semigroup is isomorphic to a Rees matrix semigroup without zero \cite[Section 3.3]{Howie94}.

\begin{lemma} Let $G$ be a group and $P$ be a $\Lambda\times I$ matrix with entries from $G$. 
Then $\mcal{M}[G;I,\Lambda;P]$ is $\aleph_0$-categorical if and only if $\mcal{M}^0[G;I,\Lambda;P]$ is $\aleph_0$-categorical. 
\end{lemma}

\begin{proof} The result is immediate from \cite[Corollary 2.12]{Quinncat} since $\mcal{M}^0[G;I,\Lambda;P]$ is isomorphic to $\mcal{M}[G;I,\Lambda;P]$ with a zero adjoined.  
\end{proof} 

As a consequence, to examine the $\aleph_0$-categoricity of both completely simple and completely 0-simple semigroups, it suffices to study   Rees matrix semigroups. 

A fundamental discovery in \cite{Quinncat} was that to understand the $\aleph_0$-categoricity of an arbitrary semigroup, it is necessary to study $\aleph_0$-categorical completely (0-)simple semigroups. Indeed, they arise as principal factors of an $\aleph_0$-categorical semigroup, as well as giving examples of 0-direct indecomposable summands in a semigroup with zero. 

In \cite{Quinncat} the $\aleph_0$-categoricity of Rees matrix semigroups over identity matrices (known as \textit{Brandt semigroups}) were determined, although we deferred the general case to this current article. 
Countable homogeneous completely simple semigroups have been  classified (modulo our understanding of homogeneous groups) in \cite{Quinncss}, which gives rise to more complex examples of $\aleph_0$-categorical completely (0-)simple semigroups. 

Given a Rees matrix semigroup $S=\mcal{M}^0[G;I,\Lambda;P]$ with $P=(p_{\lambda,i})$, we let $G(P)$ denote the subset of $G$ of all non-zero entries of $P$, that is,  $ G(P): = \{p_{\lambda,i}: p_{\lambda,i}\neq 0\}$. 
The idempotents of $S$ are easily described \cite[Page 71]{Howie94}: 
\[ E(S) = \{(i,p_{\lambda,i}^{-1},\lambda):p_{\lambda,i}\neq 0\}. 
\] 

Since there exists a  simple isomorphism theorem for Rees matrix semigroups  \cite[Theorem 3.4.1]{Howie94} (see Theorem \ref{iso c0s}), we should be hopeful of achieving a thorough understanding of  $\aleph_0$-categorical Rees matrix semigroups via the RNT. 
 However, from the isomorphism theorem it is not clear how the $\aleph_0$-categoricity of the semigroup $\mcal{M}^0[G;I,\Lambda;P]$ affects the sets $I$ and $\Lambda$.
  We instead follow a technique of Graham \cite{Graham68} and Houghton \cite{Houghton77} of constructing a bipartite graph from the sets $I$ and $\Lambda$.
  
A \textit{bipartite graph} is a (simple) graph whose vertices can be split into two disjoint non-empty sets $L$ and $R$ such that every edge connects a vertex in $L$ to a vertex in $R$.
 The sets $L$ and $R$ are called the \textit{left set} and the \textit{right set}, respectively. 
 Formally, a bipartite graph is a triple $\Gamma=\langle L,R, E \rangle$ such that $L$ and $R$ are non-empty trivially intersecting sets and 
\[ E\subseteq \{ \{x,y\}  :x\in L, \, y\in R \}.
\] 
 We call $L\cup R$ the set of \textit{vertices} of $\Gamma$ and $E$ the set of \textit{edges}. 
 An isomorphism between a pair of bipartite graphs $\Gamma=\langle L,R,E \rangle$ and $\Gamma'=\langle L',R',E' \rangle$ is a bijection $\psi\colon  L\cup R \rightarrow L'\cup R'$ such that $L\psi=L'$,  $R\psi=R'$, and  $\{l,r\}\in E$ if and only if $\{l\psi,r\psi\}\in E'$. 
 We are therefore regarding bipartite graphs in the signature $L_{BG}=\{Q_L,Q_R,E\}$, where $Q_L$ and $Q_R$ are unary relations, which  correspond to the sets $L$ and $R$, respectively, and $E$ is a binary relation corresponding to the edge relation (here we abuse the notation somewhat by letting $E$ denote the edge relation \textit{and} the set of edges). 

Let $\Gamma=\langle L,R, E \rangle$ be a bipartite graph.
 Then $\Gamma$ is called \textit{complete} if,  for all $x\in L, \, y\in R$, we have $\{x,y\}\in E$. If $E=\emptyset$ then $\Gamma$ is called  \textit{empty}. 
 If each vertex of $\Gamma$ is incident to exactly one edge, then $\Gamma$ is called a \textit{perfect matching}. 
 The \textit{complement} of $\Gamma$ is the bipartite graph $\langle L, R,E'\rangle$ with 
\[ E'=\{\{x,y\}: x\in L, y\in R, \{x,y\}\not\in E\}.
\] 
 Hence an empty bipartite graph is the complement of a complete bipartite graph, and vice-versa.  We call $\Gamma$ \textit{random} if, for each $k,\ell \in \mathbb{N}$, and for every distinct $x_1,\dots,x_k,y_1,\dots,y_{\ell}$ in $L$ (in $R$) there exists infinitely many $u\in R$ ($u\in L$) such that $\{u,x_i\}\in E$ but $\{u,y_j\}\not\in E$ for each $1\leq i \leq k$ and $1 \leq j \leq \ell$.

Clearly, for each pair $n,m\in \mathbb{N}^*=\mathbb{N}\cup\{\aleph_0\}$, there exists a unique (up to isomorphism) complete biparite graph with left set of size $n$ and right set of size $m$,  which we denote as $K_{n,m}$. 
There also exists a unique, up to isomorphism, perfect matching with left and right sets of size $n$, denoted $P_n$. 
Similar uniqueness  holds for the empty bipartite graph $E_{n,m}$ with left set of size $n$ and right set of size $m$, and the complement of the perfect matching $P_n$, which we denote as $CP_n$. Less obviously, any pair of random bipartite graphs are isomorphic \cite{Erdos}.

\begin{theorem}\label{BG CAT LIST} \cite{Goldstern96} A countable bipartite graph is homogeneous if and only if it is isomorphic to either $K_{n,m}$, $E_{n,m}$, $P_n$, $CP_n$ for some $n,m\in \mathbb{N}^*$, or the  random bipartite graph.
\end{theorem}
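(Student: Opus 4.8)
The statement is a biconditional; the direction ``each graph on the list is homogeneous'' is routine, and I would dispatch it first. For $K_{n,m}$ and $E_{n,m}$ an isomorphism between finite induced substructures is merely a partial bijection respecting the predicates $Q_L,Q_R$, with no constraint imposed by the edge relation (every left--right pair is an edge, resp.\ none is), so it extends to a product of bijections $L\to L$, $R\to R$, which is an automorphism. For $P_n$ and $CP_n$ one checks that such an isomorphism maps a matched pair lying in the substructure to a matched pair and an isolated vertex of the substructure to an isolated vertex (on the correct side), and extends accordingly. For the random bipartite graph, call it $\mathcal R$, homogeneity follows from a back-and-forth argument: the type of a fresh vertex $v\in L$ over a finite set $A$ is recorded by $N(v)\cap A$, the defining ``random'' condition supplies, on the appropriate side, infinitely many vertices realising the transported type over the image set, and alternating sides using countability yields an automorphism; since any two random bipartite graphs are isomorphic, $\mathcal R$ is pinned down. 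So the work is all in the converse.

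For the converse, let $\Gamma=\langle L,R,E\rangle$ be homogeneous. Homogeneity applied to induced substructures on one or two vertices forces $\aut(\Gamma)$ to be transitive on $L$, on $R$, on ordered pairs of distinct left (resp.\ right) vertices, on oriented edges and on non-edges. Hence the degree $|N(x)|$ and the codegree $|M(x)|$, where $M(x)=R\setminus N(x)$, are constant over $x\in L$ (dually on the right); either every left vertex has a neighbour or none does, and likewise for non-neighbours; and applying transitivity on pairs to the invariant relation ``$N(x)=N(x')$'' shows the left neighbourhoods are either all equal or pairwise distinct. I would then split cases. If $E=\emptyset$ then $\Gamma=E_{|L|,|R|}$; if $\Gamma$ is complete then $\Gamma=K_{|L|,|R|}$. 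Otherwise every vertex has both a neighbour and a non-neighbour on the other side. If the left-degree is $1$, then two left vertices cannot share their unique neighbour --- for then, by transitivity on pairs, all left vertices would share it, making $R$ a single point and $\Gamma$ complete --- so $\Gamma$ is a perfect matching and $\Gamma=P_n$ with $n=|L|=|R|$; dually, using that $\bar\Gamma$ is also homogeneous, left-codegree $1$ yields $\Gamma=CP_n$. The mirror statements on the right hold as well, so the only remaining, \emph{generic}, case is that all four of left/right degree/codegree are $\ge 2$, and the claim is that $\Gamma$ is then the random bipartite graph.

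In the generic case I would show $\Gamma$ satisfies the ``random'' extension property --- for all finite disjoint $X,Y\subseteq L$, infinitely many $u\in R$ are adjacent to every vertex of $X$ and to no vertex of $Y$, and symmetrically with $L,R$ swapped --- whence $\Gamma\cong\mathcal R$ by the assumed uniqueness. The proof is by induction on $|X|+|Y|$. The cases $|X|+|Y|\le 2$ reduce, via transitivity and the complement symmetry, to: degree and codegree $\ge 2$ (the hypothesis); $N(x_1)\cap N(x_2)\ne\emptyset$ for distinct $x_1,x_2\in L$ (else, by transitivity on pairs, no pair of left vertices has a common neighbour, so every right vertex has at most one left-neighbour, contradicting right-degree $\ge 2$); and $N(x_1)\not\subseteq N(x_2)$ (else, by transitivity, all left neighbourhoods coincide, contradicting non-completeness). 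For the inductive step, say with $Y\ne\emptyset$, fix $y_0\in Y$; by induction the set $W$ of right vertices adjacent to all of $X$ and to none of $Y\setminus\{y_0\}$ is nonempty; if some $w\in W$ fails to be adjacent to $y_0$ we are done, and otherwise, since all $(|X|+|Y|)$-element subsets of $L$ are isomorphic as induced substructures, for every left vertex $z\notin X\cup Y$ there is an automorphism fixing $X\cup(Y\setminus\{y_0\})$ pointwise with $y_0\mapsto z$, forcing $W\subseteq N(z)$; together with $W\subseteq N(z)$ for $z\in X$ this gives $M(w)\subseteq Y\setminus\{y_0\}$ for every $w\in W$, contradicting that the right-codegree is $\aleph_0$ (the cases $Y=\emptyset$ and $X=\emptyset$ go the same way, the latter via the complement). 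I expect the two genuinely substantial points to be: (i) a preliminary lemma that in the generic case all four parameters equal $\aleph_0$ --- the generic hypotheses force $\Gamma$ to have diameter at most $3$, so if any degree were finite then $\Gamma$ would be finite, and one must rule out, by a direct combinatorial analysis, all finite ``biregular'' homogeneous bipartite graphs, as well as the more delicate mixed possibility of one finite and one infinite parameter; and (ii) the inductive step above, which is an amalgamation-type use of homogeneity and is the place the argument really exploits ultrahomogeneity rather than mere transitivity. Everything else is bookkeeping built on the transitivity consequences of homogeneity.
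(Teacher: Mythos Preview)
The paper does not prove this theorem: it is quoted verbatim from \cite{Goldstern96} and used as a black box (only to observe that the listed bipartite graphs are $\aleph_0$-categorical). There is therefore no ``paper's own proof'' to compare against.

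That said, your outline is broadly the standard one and is essentially how the cited reference proceeds: derive vertex- and pair-transitivity from homogeneity, peel off the degenerate cases $E_{n,m}$, $K_{n,m}$, $P_n$, $CP_n$, and in the remaining ``generic'' case verify the random-bipartite extension property by an amalgamation/induction argument. Two places in your write-up are not yet proofs. First, point (i) --- that in the generic case all four degree/codegree parameters are $\aleph_0$ --- is asserted as something you ``expect'' to need, but you do not supply the argument; your inductive step genuinely uses infinite codegree at the contradiction $M(w)\subseteq Y\setminus\{y_0\}$, so without (i) the argument is incomplete. The analysis ruling out finite biregular homogeneous bipartite graphs (and the mixed finite/infinite case) is exactly the substantive content you are deferring. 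Second, your induction establishes the existence of \emph{one} witness $u$, whereas the random property demands infinitely many; you should say explicitly how to upgrade (e.g.\ once one witness exists, homogeneity over $X\cup Y$ together with infinitude of the relevant side yields infinitely many, or iterate by enlarging $Y$). With those two points filled in, your sketch would constitute a correct proof along the lines of \cite{Goldstern96}.
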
 

Since bipartite graphs are relational structures with finitely many relations,  homogeneous bipartite graphs are uniformly locally finite\footnote{This differs from the graph theoretical notion of being uniformly locally finite, i.e. such that the degrees of the vertices are bounded above by some finite value.}, and thus $\aleph_0$-categorical. 
Unfortunately, no full classification of $\aleph_0$-categorical bipartite graphs exists. 

 Let $\Gamma=\langle L,R, E \rangle$ be a bipartite graph. A \textit{path} $\mathfrak{p}$ in $\Gamma$ is a finite sequence of vertices 
 \[ \mathfrak{p}=(v_0,v_1,\dots, v_n)
 \] 
 such that $v_i$ and $v_{i+1}$ are adjacent for each $0\leq i \leq n-1$. 
 For example, if $\{x,y\}$ is an edge in $E$ then both $(x,y)$ and $(y,x)$ are paths in $\Gamma$. A pair of vertices $x$ and $y$ are \textit{connected}, denoted $x \Join y$, if and only if $x=y$ or there exists a path $(v_1,v_2,\dots, v_n)$ in $\Gamma$ such that $v_1=x$ and $v_n=y$. 
 It is clear that $\Join$ is an equivalence relation on the set of vertices of $\Gamma$, and we call the equivalence classes the \textit{connected components} of $\Gamma$. 
 Each connected component is a sub-bipartite graph of $\Gamma$ under the induced structure, and we let $\mcal{C}(\Gamma)$ denote the set of connected components of $\Gamma$.

 Let $\Gamma$ be a bipartite graph with $\mcal{C}(\Gamma) =\{\Gamma_i:i\in A\}$. For any automorphism $\phi$ of $\Gamma$ and $x,y\in \Gamma$ we have that $(x,v_2,\dots,v_{n-1},y)$ is a path in $\Gamma$ if and only if $(x\phi,v_2\phi,\dots,v_{n-1}\phi,y\phi)$ is a path in $\Gamma$, since $\phi$ preserves edges and non-edges. 
 Hence $x \, \Join \, y$ if and only if $x\phi \, \Join \, y\phi$, and so there exists a bijection $\pi$ of $A$ such that $\Gamma_i\phi=\Gamma_{i\pi}$ for each $i\in I$. 
 We have thus proven the reverse direction of the following result, the  forward being immediate.
 
 \begin{proposition}\label{auto BG} Let $\Gamma=\langle L,R, E \rangle$ be a bipartite graph with $\mcal{C}(\Gamma)=\{\Gamma_i:i\in A\}$.
  Let $\pi$ be a bijection of $A$ and $\phi_i\colon \Gamma_i\rightarrow \Gamma_{i\pi}$ an isomorphism for each $i\in A$. Then $\bigcup_{i\in I} \phi_i$ is an automorphism of $\Gamma$. Conversely, every automorphism of $\Gamma$ can be constructed in this way. 
 \end{proposition}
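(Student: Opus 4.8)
The plan is to verify the two directions of Proposition~\ref{auto BG} directly from the definitions, with the reverse direction having already been sketched in the paragraph preceding the statement; the main content is really the forward direction, which is a routine gluing argument.

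\textbf{Forward direction.} First I would take a bijection $\pi$ of $A$ together with isomorphisms $\phi_i\colon\Gamma_i\to\Gamma_{i\pi}$ of bipartite graphs for each $i\in A$, and set $\phi=\bigcup_{i\in A}\phi_i$. Since the connected components $\{\Gamma_i:i\in A\}$ partition the vertex set $L\cup R$, the map $\phi$ is a well-defined function on $L\cup R$, and it is a bijection because $\pi$ is a bijection of the index set and each $\phi_i$ is a bijection of vertex sets (so $\phi$ restricts to a bijection $\Gamma_i\to\Gamma_{i\pi}$, and distinct components have disjoint images). Next I would check that $\phi$ respects the signature $L_{BG}=\{Q_L,Q_R,E\}$: each $\phi_i$ is an isomorphism of bipartite graphs, hence maps the left set of $\Gamma_i$ onto the left set of $\Gamma_{i\pi}$ and likewise for right sets, so $\phi$ maps $L$ onto $L$ and $R$ onto $R$; that is, $Q_L$ and $Q_R$ are preserved. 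For the edge relation, note that any edge $\{x,y\}\in E$ has both endpoints in a single connected component, say $\Gamma_i$, so $\{x,y\}$ is an edge of $\Gamma_i$; then $\{x\phi,y\phi\}=\{x\phi_i,y\phi_i\}$ is an edge of $\Gamma_{i\pi}$, hence an edge of $\Gamma$. Applying the same argument to $\phi^{-1}=\bigcup_{i\in A}\phi_i^{-1}$ (which is of the same form, using the bijection $\pi^{-1}$ and the inverse isomorphisms) gives the converse implication on edges, so $\phi$ is an automorphism of $\Gamma$.

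\textbf{Reverse direction.} This is exactly the argument already given in the text immediately before the statement: an automorphism $\phi$ of $\Gamma$ preserves paths and non-paths, hence preserves the connectedness relation $\Join$, so it permutes the connected components; writing $\Gamma_i\phi=\Gamma_{i\pi}$ defines the required bijection $\pi$ of $A$, and $\phi_i:=\phi|_{\Gamma_i}$ is then an isomorphism $\Gamma_i\to\Gamma_{i\pi}$ with $\phi=\bigcup_{i\in A}\phi_i$. I would simply refer back to that paragraph rather than repeat it.

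I do not anticipate a genuine obstacle here; the only point requiring a little care is the bookkeeping in the forward direction, namely making sure that (a) an edge cannot straddle two components (immediate, since the endpoints of an edge are adjacent and hence connected) and (b) that $\phi$ is globally bijective rather than merely injective on each piece, which follows because $\pi$ is a bijection of $A$ so every component $\Gamma_j$ of the target is hit exactly once, by $\Gamma_{j\pi^{-1}}$ under $\phi_{j\pi^{-1}}$.
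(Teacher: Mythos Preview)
Your proposal is correct and matches the paper's approach exactly: the paper proves the reverse direction in the paragraph immediately preceding the statement (which you rightly cite) and declares the forward direction ``immediate''; your forward-direction argument simply fills in the routine details of that immediate gluing.
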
 
 
 \begin{proposition}\label{bg iff cc}  Let $\Gamma=\langle L,R, E \rangle$ be a bipartite graph with  $\mcal{C}(\Gamma)=\{\Gamma_i:i\in A\}$. 
 Then $\Gamma$ is $\aleph_0$-categorical if and only if each connected component is $\aleph_0$-categorical and $\mcal{C}(\Gamma)$ is finite, up to isomorphism.
 \end{proposition}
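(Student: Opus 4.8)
The plan is to identify the family $\{\Gamma_i : i \in A\}$ of connected components simultaneously as a system of $1$-pivoted p.r.c. substructures of $\Gamma$ (which handles the forward implication) and, under the hypotheses of the converse, as a $(\Gamma;\bar{A};\Psi)$-system in the sense of Section \ref{new method}; the two directions then follow from Proposition \ref{rel-char cat} and Corollary \ref{parts hard} respectively. Throughout I use that each $\Gamma_i$, equipped with the structure induced from $\Gamma$, is a sub-bipartite graph of $\Gamma$, and that by Proposition \ref{auto BG} every automorphism of $\Gamma$ permutes the connected components.

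For the forward direction, assume $\Gamma$ is $\aleph_0$-categorical and fix a vertex $x_i \in \Gamma_i$ for each $i \in A$. If $\phi \in \text{Aut}(\Gamma)$ satisfies $x_i\phi = x_j$ for some $i,j \in A$, then by Proposition \ref{auto BG} $\phi$ maps $\Gamma_i$ onto some component $\Gamma_{i\pi}$; since $x_i\phi = x_j$ lies in both $\Gamma_{i\pi}$ and $\Gamma_j$, and distinct components have disjoint vertex sets, we get $\Gamma_{i\pi} = \Gamma_j$, so $\phi|_{\Gamma_i}$ is a bijection of $\Gamma_i$ onto $\Gamma_j$. Hence $\{(\Gamma_i,x_i) : i \in A\}$ is a system of $1$-pivoted p.r.c. substructures of $\Gamma$, and Proposition \ref{rel-char cat} gives at once that $\mathcal{C}(\Gamma)$ is finite, up to isomorphism, and each $\Gamma_i$ is $\aleph_0$-categorical.

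For the converse, assume each $\Gamma_i$ is $\aleph_0$-categorical and $\mathcal{C}(\Gamma)$ is finite up to isomorphism; let $A = A_1 \cup \cdots \cup A_r$ be the partition of $A$ into isomorphism types of components and put $\bar{A} = (A;A_1,\dots,A_r)$, which is $\aleph_0$-categorical by Corollary \ref{set plus cat}. Setting $\Psi_{i,j} = \text{Iso}(\Gamma_i;\Gamma_j)$ and $\Psi = \bigcup_{i,j\in A}\Psi_{i,j}$, Conditions (3.1)--(3.3) are immediate, since $\Psi_{i,j} \neq \emptyset$ precisely when $\Gamma_i \cong \Gamma_j$, that is, when $i,j$ lie in a common $A_k$, while composites and inverses of graph isomorphisms are graph isomorphisms; Condition (3.4) is exactly Proposition \ref{auto BG}, using that the components are pairwise disjoint so that for any $\pi\in\text{Aut}(\bar{A})$ and $\phi_i\in\Psi_{i,i\pi}$ the union $\bigcup_{i\in A}\phi_i$ is a well-defined automorphism of $\Gamma$ extending each $\phi_i$. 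Thus $\{\Gamma_i : i \in A\}$ is a $(\Gamma;\bar{A};\Psi)$-system; since each $\Gamma_i$ is $\aleph_0$-categorical it is $\aleph_0$-categorical over $\Psi_{i,i} = \text{Aut}(\Gamma_i)$, and Corollary \ref{parts hard} yields that $\Gamma$ is $\aleph_0$-categorical. The whole argument is essentially bookkeeping once the framework of Section \ref{new method} is available; the only step needing a moment's care is the verification of Condition (3.4), but this is precisely the content of Proposition \ref{auto BG} together with the disjointness of the components, so I anticipate no genuine obstacle.
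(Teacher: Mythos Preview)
Your proposal is correct and follows essentially the same approach as the paper: the forward direction via the $1$-pivoted p.r.c.\ system $\{(\Gamma_i,x_i):i\in A\}$ and Proposition~\ref{rel-char cat}, and the converse via the $(\Gamma;\bar{A};\Psi)$-system with $\Psi_{i,j}=\mathrm{Iso}(\Gamma_i;\Gamma_j)$ and Corollary~\ref{parts hard}. If anything, you spell out slightly more detail (the verification that $\Gamma_{i\pi}=\Gamma_j$ in the forward direction) than the paper does.
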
 

\begin{proof} 
($\Rightarrow$) By Proposition \ref{auto BG} we have that, for any choice of $x_i\in \Gamma_i$ ($i\in A$), the set $\{(\Gamma_i,x_i):i\in A\}$ forms a system of 1-pivoted p.r.c. sub-bipartite graphs of $\Gamma$. The result then follows from Proposition \ref{rel-char cat}. 

($\Leftarrow$) First we show that $\mcal{C}(\Gamma)$ forms a $(\Gamma;\underline{A};\Psi)$-system in $\Gamma$ for some $\underline{A}$ and $\Psi$. 
Let $A_1,\dots,A_r$ be the finite partition of $A$ corresponding to the isomorphism types of the connected components of $\Gamma$, that is, $\Gamma_i\cong  \Gamma_j$ if and only if $ i,j\in A_k$ for some $k$. 
Fix $\underline{A}=(A;A_1,\dots,A_r)$. For each $i,j\in A$, let $\Psi_{i,j}=\text{\iso}(\Gamma_i;\Gamma_j)$ and fix $\Psi=\bigcup_{i,j\in A} \Psi_{i,j}$. 
Then $\Psi$ clearly satisfy Conditions (A), (B) and (C). Let $\pi\in \text{Aut}(\underline{A})$ and, for each $i\in A$, let $\phi_i\in \Psi_{i,i\pi}$. 
Then by Proposition \ref{auto BG}, $\phi=\bigcup_{i\in A} \phi_i$ is an automorphism of $\Gamma$, and so $\Psi$ satisfies Condition (D). 
Hence $\mcal{C}(\Gamma)$ forms an $(\Gamma;\underline{A};\Psi)$-system. Each $\Gamma_i$ is $\aleph_0$-categorical (over $\Psi_{i,i}=\text{Aut}(\Gamma_i)$) and  $\underline{A}$ is $\aleph_0$-categorical by Corollary \ref{set plus cat},  and so $\Gamma$ is $\aleph_0$-categorical by Corollary \ref{parts hard}.  
\end{proof}

  \begin{definition} Let $S=\mcal{M}^0[G;I,\Lambda; P]$ be a Rees matrix semigroup with $P=(p_{\lambda,i})$. 
  Then we form a bipartite graph $\Gamma(P)=\langle I, \Lambda, E\rangle$ with edge set 
\[ E=\{\{i,\lambda\}: p_{\lambda,i} \neq 0\},
\] 
 which we call the \textit{induced bipartite graph of $ S$}. 
\end{definition}

The above construct has long been fundamental to the study of Rees matrix semigroups, and has its roots in a paper by Graham in \cite{Graham68}. 
Here, it is used to describe the maximal nilpotent subsemigroups of a Rees matrix semigroup, where a semigroup is \textit{nilpotent} if some power is equal to $\{0\}$.
 All maximal subsemigroups of a finite Rees matrix semigroup were described in the same paper, a result which was later extended in \cite{Grahams} to arbitrary finite semigroups.
  In \cite{Howie76}, Howie used the induced bipartite graph to describe the subsemigroup of a Rees matrix semigroup  generated by its idempotents.
   Finally, in \cite{Houghton77}, Houghton described the homology of the induced bipartite graph, and a detailed overview of his work is given in \cite{Rhodes}.   

 \begin{example} Let $S=\mcal{M}^0[G;\{1,2,3\},\{\lambda,\mu\}; P]$ where 
\[ P= 
 \kbordermatrix{ & 1 & 2 & 3 \\
      \lambda & a & b & 0 \\
      \mu & 0 & c & d  }. 
      \] 
Then the induced bipartite graph of $S$ is given in Figure \ref{induced S}.   
\begin{figure}[h]
\centering
\def\svgwidth{170pt} 
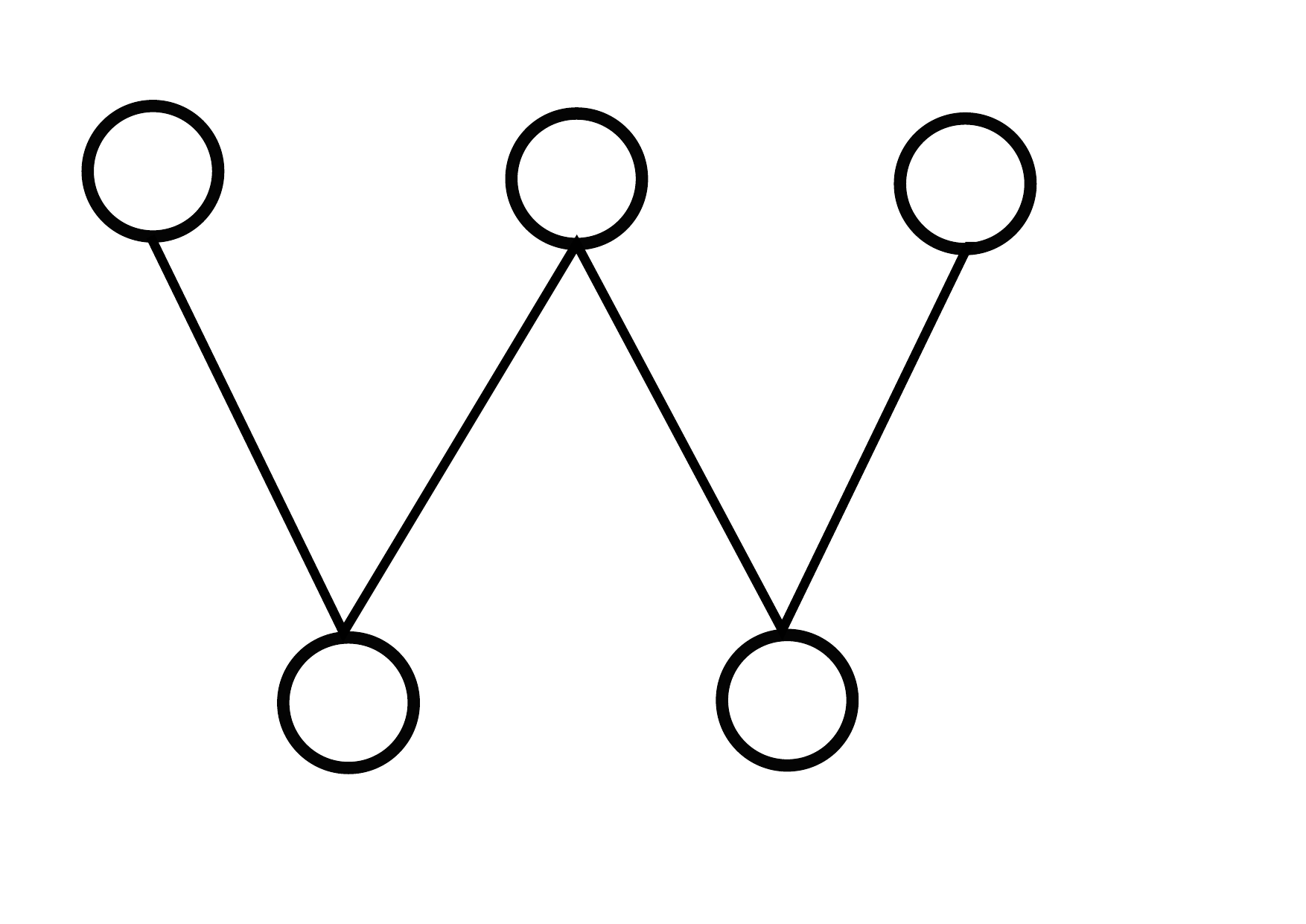 
\caption{Induced bipartite graph} 
\label{induced S} 
\end{figure} 
\end{example}

\begin{example} Let $S=\mcal{M}^0[G;I,\Lambda;P]$ be such that $P$ has no zero entries, so that $S$ is isomorphic to a completely simple semigroup with zero adjoined. 
Then $\Gamma(P)$ is a complete bipartite graph. 
\end{example}  

\begin{notation}\label{tuple not} Let $S=\mcal{M}^0[G;I,\Lambda;P]$ be a Rees matrix semigroup. For an $n$-tuple  $\overline{a}=((i_1,g_1,\lambda_1),\dots,(i_n,g_n,\lambda_n))$ of $S^*$,  we write $\Gamma(\overline{a})$ for the $2n$-tuple $(i_1,\lambda_1,\dots,i_n,\lambda_n)$ of $ \Gamma(P)$. 
\end{notation} 

Following \cite{Araujo10}, we adapt the isomorphism theorem for Rees matrix semigroups to explicitly highlight the roll of the induced bipartite graph: 

\begin{theorem} \label{iso c0s} Let $S_1=\mcal{M}^0[G_1; I_1, \Lambda_1;P_1]$ and $S_2=\mcal{M}^0[G_2;I_2,\Lambda_2;P_2]$ be a pair of Rees matrix semigroups with sandwich matrices $P_1=(p_{\lambda, i})$ and $P_2=(q_{\mu, j})$, respectively. 
Let $\psi\in \text{Iso}(\Gamma(P_1);\Gamma(P_2))$,  $\theta\in \text{Iso}(G_1;G_2)$, and $u_i, v_{\lambda}\in G_2$ for each $i\in I_1, \lambda \in \Lambda_1$. 
Then the mapping $\phi\colon S_1\rightarrow S_2$ given by 
\[ (i,g,\lambda)\phi= (i\psi, u_i  (g\theta)  v_{\lambda}, \lambda \psi)
\]
is an isomorphism if and only if $p_{\lambda, i} \, \theta = v_{\lambda} \cdot q_{\lambda \psi, i\psi} \cdot u_i$  whenever $p_{\lambda, i}\neq 0$. Moreover, every isomorphism from $S_1$ to $S_2$ can be described in this way. 
\end{theorem}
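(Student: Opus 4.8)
The plan is to deduce this from the classical Rees isomorphism theorem \cite[Theorem 3.4.1]{Howie94}; the only real content is repackaging the pair of index bijections appearing there as a single bipartite graph isomorphism. Recall that the classical theorem says every isomorphism $\phi\colon S_1\to S_2$ has the form $(i,g,\lambda)\phi=(i\omega,\,u_i(g\theta)v_\lambda,\,\lambda\tau)$ for some bijections $\omega\colon I_1\to I_2$, $\tau\colon\Lambda_1\to\Lambda_2$, a group isomorphism $\theta\colon G_1\to G_2$, and elements $u_i,v_\lambda\in G_2$ with $p_{\lambda,i}\theta=v_\lambda\,q_{\lambda\tau,i\omega}\,u_i$ for all $i\in I_1,\lambda\in\Lambda_1$ (using $0\theta=0$ and the convention that a product involving $0$ is $0$), and conversely every such datum gives an isomorphism.

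For the ``moreover'' clause, given an isomorphism $\phi$ in the classical form I would set $\psi=\omega\cup\tau$, a bijection $I_1\cup\Lambda_1\to I_2\cup\Lambda_2$ (well defined since $I_1\cap\Lambda_1=\emptyset$) with $I_1\psi=I_2$ and $\Lambda_1\psi=\Lambda_2$; since the $u_i,v_\lambda$ are units of $G_2$, the classical linking equation forces $p_{\lambda,i}=0\iff q_{\lambda\psi,i\psi}=0$, which is exactly the statement that $\psi\in\text{Iso}(\Gamma(P_1);\Gamma(P_2))$, and then the stated formula for $\phi$ and the stated linking identity (now needed only for $p_{\lambda,i}\neq0$) hold verbatim. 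For the biconditional, starting from $\psi,\theta,(u_i),(v_\lambda)$ as in the statement I would restrict $\psi$ to get bijections $\omega=\psi|_{I_1}$ and $\tau=\psi|_{\Lambda_1}$ (using that a bipartite graph isomorphism sends left set to left set and right set to right set); since $\psi$ preserves edges and non-edges, $p_{\lambda,i}=0\iff q_{\lambda\psi,i\psi}=0$, and together with the hypothesis on nonzero entries this recovers the full classical linking equation for all $i,\lambda$, whence \cite[Theorem 3.4.1]{Howie94} gives that $\phi$ is an isomorphism. Conversely, if $\phi$ of the given form is an isomorphism then in particular it is a homomorphism, and expanding $\bigl((i,g,\lambda)(j,h,\mu)\bigr)\phi=(i,g,\lambda)\phi\,(j,h,\mu)\phi$ in the case $p_{\lambda,j}\neq0$ and cancelling in $G_2$ yields precisely $p_{\lambda,j}\theta=v_\lambda\,q_{\lambda\psi,j\psi}\,u_j$ (the required $q_{\lambda\psi,j\psi}\neq0$ being automatic since $\psi$ is a graph isomorphism).

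The main thing to watch, rather than a genuine obstacle, is the bookkeeping around zero entries: our formulation imposes the linking condition only when $p_{\lambda,i}\neq0$, so it is the hypothesis $\psi\in\text{Iso}(\Gamma(P_1);\Gamma(P_2))$ that carries the information about the zero pattern of the sandwich matrices, and one must check this matches the zero conventions of the classical statement. I would also record at the start that regularity of $P_1,P_2$ ensures $\Gamma(P_1),\Gamma(P_2)$ are genuinely bipartite graphs in the sense of the paper and that $I_k,\Lambda_k$ are taken disjoint. If one prefers to avoid invoking Howie for the ``$\Leftarrow$'' direction of the biconditional, it can also be checked directly: $\phi$ is a bijection because $g\mapsto u_i(g\theta)v_\lambda$ is a bijection $G_1\to G_2$ for each fixed $i,\lambda$, and the homomorphism property follows by splitting into the cases $p_{\lambda,j}\neq0$ (apply the linking identity) and $p_{\lambda,j}=0$ (both sides vanish, using $q_{\lambda\psi,j\psi}=0$).
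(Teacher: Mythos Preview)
Your proposal is correct and matches the paper's approach: the paper does not give a proof of this theorem but presents it as an adaptation of the classical Rees isomorphism theorem \cite[Theorem 3.4.1]{Howie94} (following \cite{Araujo10}), and your argument is precisely the repackaging of the two index bijections as a single bipartite graph isomorphism that this adaptation amounts to. Your remarks about the zero-pattern bookkeeping and the disjointness of $I_k,\Lambda_k$ are the right sanity checks; nothing further is needed.
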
 

The isomorphism $\phi$ will be denoted as $(\theta,\psi,(u_i)_{i\in I},(v_{\lambda})_{\lambda\in \Lambda})$. 
We also denote the induced group isomorphism $\theta$ as $\phi_{G_1}$, and the induced bipartite graph isomorphism $\psi$  as $\phi_{\Gamma(P_1)}$, so that $\phi=(\phi_{G_1},\psi_{\Gamma(P_1)},(u_i)_{i\in I_1},(v_{\lambda})_{\lambda\in \Lambda_1})$. 
Note that the induced group isomorphism is not uniquely defined by $\phi$. That is, there may exist $\theta'\in \text{Iso}(G_1;G_2)$ and $u_i',v_{\lambda}'\in G_2$, such that $\theta'\neq \theta$ but $\phi=(\theta',\psi,(u_i')_{i\in I_1},(v_{\lambda}')_{\lambda\in \Lambda_1})$. Examples of this phenomenon will occur throughout this work. 

The composition and inverses of isomorphisms between Rees matrix semigroups behave in a natural way as follows, and a proof can be found in \cite{Quinn}. 

\begin{corollary}\label{induced iso} Let $S_k=\mathcal{M}^0[G_k;I_k,\Lambda_k;P_k]$ ($k=1,2,3$) be Rees matrix semigroups. 
Then for any pair of isomorphisms $\phi=(\theta,\psi,(u_i)_{i\in I_1},(v_{\lambda})_{\lambda\in \Lambda_1})\in \text{\iso}(S_1;S_2)$ and $\phi'=(\theta',\psi',(u_j')_{j\in I_2},(v_{\mu}')_{\mu\in \Lambda_2})\in \text{\iso}(S_2;S_3)$ we have:
\begin{enumerate}[label=(\roman*)]
 \item  $\phi\phi'=\big(\theta\theta',\psi\psi',(u'_{i\psi}(u_i\theta'))_{i\in I_1},((v_{\lambda}\theta')v_{\lambda\psi}')_{\lambda\in \Lambda_1} \big)$; 
\item $\phi^{-1}=(\theta^{-1},\psi^{-1},((u_{i\psi^{-1}})^{-1}\theta^{-1})_{i\in I_2},((v_{\lambda\psi^{-1}})^{-1}\theta^{-1})_{\lambda\in \Lambda_2})$. 
\end{enumerate} 
\end{corollary}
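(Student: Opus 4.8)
The plan is to prove both identities by a direct computation on an arbitrary non-zero element $(i,g,\lambda)$, reading off the resulting map in the form prescribed by Theorem~\ref{iso c0s} and using repeatedly that the induced group and bipartite-graph maps are genuine isomorphisms.

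For part (i), I would apply $\phi$ and then $\phi'$ to $(i,g,\lambda)\in S_1^*$. Applying $\phi$ gives $(i\psi,\, u_i(g\theta)v_\lambda,\, \lambda\psi)$, and applying $\phi'$ to this yields a triple whose outer coordinates are $i\psi\psi'$ and $\lambda\psi\psi'$ and whose middle coordinate is $u'_{i\psi}\bigl((u_i(g\theta)v_\lambda)\theta'\bigr)v'_{\lambda\psi}$. Using that $\theta'$ is a homomorphism to expand $(u_i(g\theta)v_\lambda)\theta' = (u_i\theta')(g\,\theta\theta')(v_\lambda\theta')$ and then regrouping, the middle coordinate becomes $\bigl(u'_{i\psi}(u_i\theta')\bigr)\,(g\,\theta\theta')\,\bigl((v_\lambda\theta')v'_{\lambda\psi}\bigr)$. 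Since $\theta\theta'\in\text{Iso}(G_1;G_3)$ and $\psi\psi'\in\text{Iso}(\Gamma(P_1);\Gamma(P_3))$, this is exactly the description of $\phi\phi'$ asserted in (i). I note that there is no need to separately check the sandwich-matrix compatibility condition of Theorem~\ref{iso c0s} here: $\phi\phi'$ is a composite of isomorphisms, hence an isomorphism, so that condition holds automatically, and the point of the computation is merely to identify the data attached to $\phi\phi'$.

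For part (ii), I would let $\chi\colon S_2\to S_1$ denote the \emph{set map} defined by the formula claimed for $\phi^{-1}$, and verify directly that $\phi\chi$ and $\chi\phi$ are the identity maps of $S_1$ and $S_2$ respectively. Computing $(i,g,\lambda)\phi\chi$ for $(i,g,\lambda)\in S_1^*$, the outer coordinates return to $i$ and $\lambda$ (because $(i\psi)\psi^{-1}=i$ and $(\lambda\psi)\psi^{-1}=\lambda$), while the middle coordinate, after expanding via the homomorphism $\theta^{-1}$ and using that $(u_i)^{-1}\theta^{-1}=(u_i\theta^{-1})^{-1}$ and similarly for $v_\lambda$, collapses to $(u_i\theta^{-1})^{-1}(u_i\theta^{-1})\cdot g\cdot (v_\lambda\theta^{-1})(v_\lambda\theta^{-1})^{-1}=g$; a symmetric computation handles $\chi\phi$. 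Hence $\chi=\phi^{-1}$, which simultaneously shows that the claimed formula does define an isomorphism and that it equals $\phi^{-1}$.

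The whole argument is routine; the only real care needed is the bookkeeping — tracking the index maps $i\mapsto i\psi$, $i\mapsto i\psi^{-1}$ and their composites, respecting the left-to-right convention for composing isomorphisms, and correctly moving group inverses through the homomorphisms $\theta'$ and $\theta^{-1}$. One could instead try to deduce (ii) from (i) by substituting the inverse data into the composition formula and checking the result is the identity description, but because the induced group isomorphism attached to an isomorphism is not unique this reading-off takes slightly more thought, so the direct verification above is the cleaner route. I do not anticipate any genuine obstacle.
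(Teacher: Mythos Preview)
Your proposal is correct and is precisely the routine computation one expects; the paper itself does not supply a proof but merely states that one can be found in \cite{Quinn}, so there is nothing to compare against beyond noting that your direct element-by-element verification is the natural argument.
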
 

Let $\Gamma=\langle L,R,E \rangle$ be a bipartite graph. For each $n\in \mathbb{N}$, we let  $\sigma_{\Gamma,n}$ be the equivalence relation  on $\Gamma^n$ given by 
\[ (x_1,\dots,x_n) \, \sigma_{\Gamma,n} \, (y_1,\dots,y_n) \Leftrightarrow [x_i\in L \Leftrightarrow y_i\in L, \text{ for each } 1\leq i \leq n].
\]
Since each entry of an $n$-tuple of $\Gamma$ lies in either $L$ or $R$ we have that
\[ |\Gamma^n/\sigma_{\Gamma,n}|=2^n, 
\] 
for each $n$. 
Moreover, as the automorphisms of $\Gamma$ fixes the sets $L$ and $R$, it easily follows that $\sim_{\Gamma,n} \,  \subseteq \,  \sigma_{\Gamma,n}$.

\begin{proposition}\label{G and gamma cat} If $S= \mcal{M}^0[G; I, \Lambda;P]$ is $\aleph_0$-categorical, then $G$ and $\Gamma(P)$ are $\aleph_0$-categorical. 
\end{proposition}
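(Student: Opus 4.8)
The plan is to treat $G$ and $\Gamma(P)$ separately, using the $t$-pivoted p.r.c.\ machinery for the group and the ``finitely many choices'' method (via \cite[Lemma 2.8]{Quinncat} and the RNT) for the graph. For $G$: since $P$ is regular we have $E(S)\neq\emptyset$, and $\{(H_e,e):e\in E(S)\}$ is a system of $1$-pivoted p.r.c.\ subgroups of $S$, as recorded after Definition~\ref{fprc}. Every maximal subgroup of $S$ is isomorphic to $G$: if $e=(i,p_{\lambda,i}^{-1},\lambda)$ then $H_e=\{(i,g,\lambda):g\in G\}$ and $g\mapsto(i,gp_{\lambda,i}^{-1},\lambda)$ is an isomorphism $G\to H_e$. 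Hence Proposition~\ref{rel-char cat}, applied to the $\aleph_0$-categorical semigroup $S$, shows $H_e$ — and so $G$ — is $\aleph_0$-categorical.

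For $\Gamma(P)$ I would transfer tuples of the graph into $S$ and pull back $\sim_{S,n}$. Regularity of $P$ lets us fix, for each $i\in I$, some $\widehat\lambda(i)\in\Lambda$ with $p_{\widehat\lambda(i),i}\neq0$, and for each $\lambda\in\Lambda$ some $\widehat\imath(\lambda)\in I$ with $p_{\lambda,\widehat\imath(\lambda)}\neq0$. Define $F\colon\Gamma(P)^n\to(S^*)^n$ by replacing the $k$-th entry $x_k$ of a tuple with $(x_k,1_G,\widehat\lambda(x_k))$ when $x_k\in I$ and with $(\widehat\imath(x_k),1_G,x_k)$ when $x_k\in\Lambda$. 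On a pair of $n$-tuples $\underline x,\underline y$ of $\Gamma(P)$ impose the two conditions $\underline x\,\sigma_{\Gamma(P),n}\,\underline y$ and $F(\underline x)\,\sim_{S,n}\,F(\underline y)$: the first has $2^n$ choices, and the second has finitely many since $S$ is $\aleph_0$-categorical (and $(S^*)^n$ is a union of $\sim_{S,n}$-classes, as automorphisms of $S$ fix $0$).

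It then remains to check that these two conditions force $\underline x\,\sim_{\Gamma(P),n}\,\underline y$. Given $\phi\in\text{Aut}(S)$ with $F(\underline x)\phi=F(\underline y)$, Theorem~\ref{iso c0s} writes $\phi=(\theta,\psi,(u_i)_{i\in I},(v_\lambda)_{\lambda\in\Lambda})$ with $\psi\in\text{Aut}(\Gamma(P))$, and $(i,g,\lambda)\phi=(i\psi,u_i(g\theta)v_\lambda,\lambda\psi)$ so the first coordinate of $(i,g,\lambda)\phi$ is $i\psi$. The point of $\underline x\,\sigma_{\Gamma(P),n}\,\underline y$ is that $x_k$ and $y_k$ lie on the same side of the bipartition; if both lie in $I$ then comparing first coordinates in $(x_k,1_G,\widehat\lambda(x_k))\phi=(y_k,1_G,\widehat\lambda(y_k))$ gives $x_k\psi=y_k$, and symmetrically if both lie in $\Lambda$. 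Thus $\psi$ carries $\underline x$ to $\underline y$, so $\underline x\,\sim_{\Gamma(P),n}\,\underline y$, and $\Gamma(P)$ is $\aleph_0$-categorical. The main thing to get right is exactly this last step: an arbitrary $\phi$ could induce a graph automorphism $\psi$ acting ``transversally'' to the intended vertices, and it is the $\sigma_{\Gamma(P),n}$ condition, combined with the explicit shape of isomorphisms in Theorem~\ref{iso c0s}, that rules this out.
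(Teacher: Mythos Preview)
Your proposal is correct and follows essentially the same approach as the paper. For $G$ you unpack exactly the argument behind \cite[Corollary 3.7]{Quinncat}, which the paper simply cites; for $\Gamma(P)$ both you and the paper embed graph tuples into $S^*$ and read off the vertex action from the $\psi$-component of Theorem~\ref{iso c0s}, the only cosmetic difference being that the paper fixes a single pair $(i,\lambda)$ and reorders the tuple by side, whereas you keep the tuple order and let $\widehat\lambda(i),\widehat\imath(\lambda)$ vary.
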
 

\begin{proof}
Since $G$ is isomorphic to the non-zero maximal subgroups of $S$, it is $\aleph_0$-categorical by \cite[Corollary 3.7]{Quinncat}.
 Now let $\overline{a}=(a_1,\dots,a_n)$ and $\overline{b}=(b_1,\dots,b_n)$ be a pair of $\sigma_{\Gamma(P),n}$-related $n$-tuples of $\Gamma(P)$. 
 Let $i_1<i_2<\cdots<i_s$ and  $j_1<j_2<\cdots<j_t$ be the indexes of entries of $\overline{a}$ lying in $I$ and $\Lambda$, respectively (noting that the same is true for $\overline{b}$ as $\overline{a} \, \sigma_{\Gamma,n} \, \overline{b}$).
Suppose further that there exists $i\in I,\lambda\in \Lambda$ such that the $n$-tuples 
\begin{align*} & ((a_{i_1},1,\lambda), \dots, (a_{i_s},1,\lambda),(i,1,a_{j_1}),\dots,(i,1,a_{j_t})) \quad \text{and} \\
& ((b_{i_1},1,\lambda), \dots, (b_{i_s},1,\lambda),(i,1,b_{j_1}),\dots,(i,1,b_{j_t})),
\end{align*}
are automorphically equivalent via $\phi\in \text{Aut}(S)$, say.
 By Theorem \ref{iso c0s},  $a_{i_r}\phi_{\Gamma(P)}=b_{i_r}$ and $a_{j_{r'}}\phi_{\Gamma(P)}=b_{j_{r'}}$ for each $1\leq r \leq s$ and $1\leq r'\leq t$. 
 Hence $\overline{a} \, \sim_{\Gamma(P),n} \, \overline{b}$ via $\phi_{\Gamma(P)}$, and we have thus shown that 
\[ |\Gamma(P)^{n}/\sim_{\Gamma(P),n}| \leq 2^{n} \cdot |S^n/\sim_{S,n}|. 
\] 
Hence $\Gamma(P)$ is $\aleph_0$-categorical by the $\aleph_0$-categoricity of $S$.  
\end{proof}

However, the converse to the proposition above does not hold in general (even in the completely simple case).   

\begin{example}\label{example main} Let $G=\{1,a\}$ be the group of size 2 and let $I=\{i_0,i_1,\dots,\}$ and $\Lambda=\{\lambda_0,\lambda_1,\dots,\}$ be infinite sets. 
Let $P$ be the $\Lambda\times I$ matrix in which    $p_{\lambda_k,i_{\ell}}=a$ if and only if $k\geq \ell \geq 1$, that is, 
 \[  P  = \begin{bmatrix}
 1 & 1 & 1 & 1 & \cdots & 1 & \cdots \\
 1& a & 1 & 1 & \cdots & 1 & \cdots \\
 1 & a & a & 1 & \ddots & \ddots & \vdots &  \\
 \vdots & \vdots & \ddots & \ddots & 1 & 1 & \cdots \\
 1 & a & \cdots & a & a & 1  & \cdots \\
 1 & a& \cdots & a & a & a & \cdots \\
 \vdots & \vdots & \cdots & \vdots & \vdots & \vdots & \ddots 
 \end{bmatrix}.\]
Let $S=\mcal{M}[G;I,\Lambda;P]$. Then $\Gamma(P)$ is a complete bipartite graph, and thus $\aleph_0$-categorical. 
However, $\{((i_0,1,\lambda_0),(i_k,1,\lambda_k)):k\in \mathbb{N}\}$ can be shown to be an infinite set of distinct 2-automorphism types of $S$. 
Alternatively, we will show at the end of the section that $S$ is not $\aleph_0$-categorical by Proposition \ref{prop:css}. 
\end{example} 

\subsection{Connected Rees components} 

Let $S_k=\mcal{M}^0[G;I_k,\Lambda_k;P_k]$ ($k\in A$) be a collection of Rees matrix semigroups with $P_k=(p_{\lambda,i}^{(k)})$ and $S_k\cap S_{\ell}=\{0\}$ for eack $k,\ell\in A$. 
Then we may form a single Rees matrix semigroup $S=\mcal{M}^0[G;I,\Lambda;P]$, where $I=\bigcup_{k\in A} I_k$, $\Lambda=\bigcup_{k\in A} \Lambda_k$ and $P=(p_{\lambda,i})$ is the $\Lambda$ by $I$ matrix defined by 
\[ p_{\lambda,i} = \left\{
\begin{array}{ll}
p_{\lambda,i}^{(k)} & \text{if       } \lambda,i\in \Gamma(P_k), \text{ for some } k\\
0 & \text{else. } 
\end{array}
\right. 
\]
That is, $P$ is the block matrix
\begin{equation}\label{P form}  P = \begin{bmatrix}
P_1 & 0 & 0 & \cdots  \\
0&P_2 & 0 & \cdots  \\
0 & 0 & P_3  & \ddots  \\
\vdots & \vdots & \ddots & \ddots 
\end{bmatrix}. 
\end{equation} 
We denote $S$ by $\mathlarger{\mathlarger{\circledast}}_{k\in A}^G S_k$. 
The subsemigroups $S_k$ of $S$ are called \textit{Rees components of $S$}. 
Notice that each $\Gamma(P_k)$ is a union of connected components of $\Gamma(P)$. The subsemigroup $S_k$ will be called a \textit{connected Rees component of $S$} if $\Gamma(P_k)$ is connected (and is therefore a connected component of $\Gamma(P)$).

Conversely, for any Rees matrix semigroup $S=\mcal{M}^0[G;I,\Lambda;P]$ there exists partitions $\{I_k:k\in A\}$ and $\{\Lambda_k:k\in A\}$ of $I$ and $\Lambda$, respectively, such that  $\mcal{C}(\Gamma(P))=\{\Lambda_k\cup I_k:k\in A\}$. Consequently, for each $k\in A$, the subsemigroup $S_k=\mcal{M}^0[G;I_k,\Lambda_k;P_k]$ of $S$ is a connected Rees component, where $P_k$ is the $\Lambda_k\times I_k$ submatrix of $P$, and are such that $S_kS_{\ell}=0$ for all $k\neq \ell$. 
Following the work of Graham \cite{Graham68}, we may then permute the rows and columns of $P$ if necessary to assume without loss of generality that $P$ is a block matrix of the form \eqref{P form}. 

Note that if $S$ is a Rees matrix semigroup with connected Rees components $\{S_k:k\in A\}$ then clearly
\begin{equation} \label{E rees} E(S) = \bigcup_{k\in A} E(S_k). 
\end{equation}

Using the fact that automorphisms of $\Gamma(P)$ arise as collections of isomorphisms between its connected components, we obtain an alternative description of automorphisms of a Rees matrix semigroups. The proof is a simple exercise, and can be found in \cite{Quinn}. 

\begin{corollary}\label{iso components} Let $S=\mathlarger{\mathlarger{\circledast}}^G_{k\in A} S_k =\mcal{M}^0[G;I,\Lambda;P]$ be a Rees matrix semigroup such that each $S_k=\mcal{M}^0[G;I_k,\Lambda_k;P_k]$ is a connected Rees component of $S$. 
Let $\pi$ be a bijection of $A$ and, for each $k\in A$, let $\phi_k=(\theta,\psi_k,(u_i^{(k)})_{i\in I_k},(v_{\lambda}^{(k)})_{\lambda\in \Lambda_k})$ be an isomorphism from $S_k$ to $S_{k\pi}$. 
Then $\phi=(\theta,\psi,(u_i)_{i\in I},(v_{\lambda})_{\lambda\in \Lambda})$ is an automorphism of $S$, where $\psi=\bigcup_{k\in A} \psi_k$, and if $i,\lambda\in \Gamma(P_k)$ then $u_i=u_i^{(k)}$ and $v_{\lambda}=v_{\lambda}^{(k)}$. 
Moreover, every automorphism of $S$ can be described in this way. 
\end{corollary}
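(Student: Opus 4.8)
The plan is to prove Corollary~\ref{iso components} in two directions, both of which reduce to the isomorphism theorem for Rees matrix semigroups (Theorem~\ref{iso c0s}) together with Proposition~\ref{auto BG} on automorphisms of bipartite graphs. First I would verify that the displayed $\phi$ is indeed an automorphism. Since $P$ has the block form \eqref{P form}, its nonzero entries $p_{\lambda,i}$ occur exactly when $i,\lambda\in\Gamma(P_k)$ for a common $k$, in which case $p_{\lambda,i}=p^{(k)}_{\lambda,i}$. The candidate map $\phi=(\theta,\psi,(u_i)_{i\in I},(v_\lambda)_{\lambda\in\Lambda})$ has $\psi=\bigcup_{k\in A}\psi_k$, which is an automorphism of $\Gamma(P)$ by Proposition~\ref{auto BG} since each $\psi_k\colon\Gamma(P_k)\to\Gamma(P_{k\pi})$ is an isomorphism of connected components and $\pi$ permutes $A$; in particular $\psi$ maps $I$ to $I$ and $\Lambda$ to $\Lambda$ and respects the edge relation. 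To apply Theorem~\ref{iso c0s} (in the form of an automorphism, i.e. $S_1=S_2=S$), I must check the sandwich-matrix condition $p_{\lambda,i}\,\theta = v_\lambda\cdot p_{\lambda\psi,i\psi}\cdot u_i$ whenever $p_{\lambda,i}\neq0$. Fix such $\lambda,i$; then $i,\lambda\in\Gamma(P_k)$ for some unique $k$, so $i\psi=i\psi_k\in I_{k\pi}$ and $\lambda\psi=\lambda\psi_k\in\Lambda_{k\pi}$ both lie in $\Gamma(P_{k\pi})$, whence $p_{\lambda\psi,i\psi}=p^{(k\pi)}_{\lambda\psi_k,i\psi_k}$. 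By definition $u_i=u_i^{(k)}$ and $v_\lambda=v_\lambda^{(k)}$, and since $\phi_k=(\theta,\psi_k,(u^{(k)}_i)_{i\in I_k},(v^{(k)}_\lambda)_{\lambda\in\Lambda_k})$ is an isomorphism $S_k\to S_{k\pi}$, Theorem~\ref{iso c0s} applied to $\phi_k$ gives exactly $p^{(k)}_{\lambda,i}\,\theta = v^{(k)}_\lambda\cdot p^{(k\pi)}_{\lambda\psi_k,i\psi_k}\cdot u^{(k)}_i$. Substituting the identifications above yields the required identity for $P$, so $\phi$ is an automorphism of $S$.

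Second I would prove that every automorphism of $S$ arises this way. Let $\phi=(\theta,\psi,(u_i)_{i\in I},(v_\lambda)_{\lambda\in\Lambda})$ be an arbitrary automorphism of $S$, written in the form of Theorem~\ref{iso c0s}. The induced bipartite-graph automorphism $\psi$ of $\Gamma(P)$ permutes the connected components $\{\Gamma(P_k):k\in A\}$, so by Proposition~\ref{auto BG} there is a bijection $\pi$ of $A$ with $\Gamma(P_k)\psi=\Gamma(P_{k\pi})$ for each $k$; write $\psi_k=\psi|_{\Gamma(P_k)}\colon\Gamma(P_k)\to\Gamma(P_{k\pi})$. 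Set $u^{(k)}_i=u_i$ for $i\in I_k$ and $v^{(k)}_\lambda=v_\lambda$ for $\lambda\in\Lambda_k$, and define $\phi_k=(\theta,\psi_k,(u^{(k)}_i)_{i\in I_k},(v^{(k)}_\lambda)_{\lambda\in\Lambda_k})$. I claim $\phi_k$ is an isomorphism $S_k\to S_{k\pi}$: the sandwich condition to check is $p^{(k)}_{\lambda,i}\,\theta = v^{(k)}_\lambda\cdot p^{(k\pi)}_{\lambda\psi_k,i\psi_k}\cdot u^{(k)}_i$ for $p^{(k)}_{\lambda,i}\neq0$, which is the restriction to the block $\Gamma(P_k)$ of the condition satisfied by $\phi$ (using again that for $i,\lambda\in\Gamma(P_k)$ we have $p_{\lambda,i}=p^{(k)}_{\lambda,i}$ and $p_{\lambda\psi,i\psi}=p^{(k\pi)}_{\lambda\psi_k,i\psi_k}$). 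Hence $\phi_k$ is an isomorphism by Theorem~\ref{iso c0s}. Finally, since $\psi=\bigcup_k\psi_k$ and $u_i=u^{(k)}_i$, $v_\lambda=v^{(k)}_\lambda$ whenever $i,\lambda\in\Gamma(P_k)$, the reconstruction of $\phi$ from the $\phi_k$ and $\pi$ is exactly the formula in the statement, completing the proof.

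I do not expect any serious obstacle here, since this is essentially bookkeeping combining two results already in hand; the only point requiring a little care is making sure the "off-diagonal'' entries cause no trouble. Because $p_{\lambda,i}=0$ whenever $i$ and $\lambda$ lie in different connected components, the sandwich condition in Theorem~\ref{iso c0s} is vacuous for such pairs, so the global condition for $\phi$ on $P$ is literally the conjunction of the local conditions for the $\phi_k$ on the $P_k$ and nothing more. One should also note that $\psi$ genuinely maps $I$ into $I$ and $\Lambda$ into $\Lambda$ — this is built into the definition of a bipartite-graph isomorphism and is inherited componentwise — so there is no ambiguity about which side of the bipartition the indices land on. Everything else is the routine verification that the composite formulas from Corollary~\ref{induced iso} are consistent with the blockwise definition of $u_i$ and $v_\lambda$, which I would state without grinding through it.
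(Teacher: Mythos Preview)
Your proposal is correct and is precisely the natural argument; the paper itself omits the proof entirely, remarking that it ``is a simple exercise, and can be found in \cite{Quinn}'', so there is nothing to compare against beyond noting that your reduction to Theorem~\ref{iso c0s} and Proposition~\ref{auto BG} is exactly what one expects. The only remark I would make is that the final sentence invoking Corollary~\ref{induced iso} is superfluous---you never need composition of isomorphisms here, only the characterisation in Theorem~\ref{iso c0s}---so you can safely drop it.
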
 

We observe that the induced group automorphisms of the isomorphisms $\phi_k$ above must all be equal.

Recall that if $S=\mcal{M}^0[G;I,\Lambda;P]$ is $\aleph_0$-categorical, then $\Gamma(P)$ is $\aleph_0$-categorical by Proposition \ref{G and gamma cat}, and thus $\mcal{C}(\Gamma(P))$ is finite, up to isomorphism, with each connected component being $\aleph_0$-categorical by Proposition \ref{bg iff cc}. 
We extend this result to the set of all connected Rees components of $S$ as follows:   

\begin{proposition}\label{cat crc} Let $S=\mathlarger{\mathlarger{\circledast}}_{k\in A}^G S_k$ be an $\aleph_0$-categorical Rees matrix semigroup such that each $S_k$ is a connected Rees component of $S$. Then each $S_k$ is $\aleph_0$-categorical and $S$ has finitely many connected Rees components, up to isomorphism. 
\end{proposition}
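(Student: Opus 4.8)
The plan is to exhibit the set of connected Rees components as a $t$-pivoted p.r.c. system of subsemigroups of $S$, so that Proposition~\ref{rel-char cat} applies directly and yields both conclusions at once. Write $S=\mathlarger{\mathlarger{\circledast}}_{k\in A}^G S_k$ with each $S_k=\mathcal{M}^0[G;I_k,\Lambda_k;P_k]$ a connected Rees component, and recall from Corollary~\ref{iso components} that every automorphism $\phi$ of $S$ is of the form $(\theta,\psi,(u_i),(v_\lambda))$ with $\psi=\bigcup_{k\in A}\psi_k$ and each $\psi_k\in\text{Iso}(\Gamma(P_k);\Gamma(P_{k\pi}))$ for a suitable bijection $\pi$ of $A$; in particular $\phi$ permutes the connected components of $\Gamma(P)$ according to $\pi$, and the restriction $\phi|_{S_k}$ is precisely the isomorphism $\phi_k=(\theta,\psi_k,(u_i^{(k)})_{i\in I_k},(v_\lambda^{(k)})_{\lambda\in\Lambda_k})$ from $S_k$ onto $S_{k\pi}$.

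The only thing to arrange is a pivot: a tuple whose image under an automorphism detects which component is being sent where. For each $k\in A$ pick a non-zero element $x_k\in S_k$, say $x_k=(i_k,1,\lambda_k)$ with $i_k\in I_k$, $\lambda_k\in\Lambda_k$ (a $1$-tuple suffices, so $t=1$). If $\phi\in\text{Aut}(S)$ satisfies $x_k\phi=x_\ell$ for some $k,\ell\in A$, then by Theorem~\ref{iso c0s} the bipartite-graph part $\psi=\phi_{\Gamma(P)}$ sends $i_k\mapsto i_\ell$ and $\lambda_k\mapsto\lambda_\ell$; since $\psi$ maps connected components of $\Gamma(P)$ to connected components, and $i_\ell,\lambda_\ell\in\Gamma(P_\ell)$, it must send the whole component $\Gamma(P_k)$ onto $\Gamma(P_\ell)$, i.e. $k\pi=\ell$. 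Hence $\phi|_{S_k}=\phi_k$ is a bijection (indeed isomorphism) of $S_k$ onto $S_\ell$. This is exactly the defining property in Definition~\ref{fprc}, so $\{(S_k,x_k):k\in A\}$ is a system of $1$-pivoted p.r.c. subsemigroups of $S$.

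Since $S$ is $\aleph_0$-categorical, Proposition~\ref{rel-char cat} now gives immediately that $\{|S_k|:k\in A\}$ is finite and, as each $S_k$ is a substructure of $S$, that $\{S_k:k\in A\}$ is finite up to isomorphism with each $S_k$ itself $\aleph_0$-categorical. That is precisely the assertion of the proposition.

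I do not expect a genuine obstacle here: the work is already done by Corollary~\ref{iso components} (which guarantees that automorphisms respect the connected-Rees-component decomposition) together with the abstract machinery of Proposition~\ref{rel-char cat}. The one point requiring a line of care is the step showing that $x_k\phi=x_\ell$ forces $\phi$ to map \emph{all} of $\Gamma(P_k)$ onto $\Gamma(P_\ell)$ — this uses that an automorphism of a bipartite graph permutes its connected components (the observation preceding Proposition~\ref{auto BG}), applied to $\Gamma(P)$, and that $i_k,\lambda_k$ generate the component $\Gamma(P_k)$ in the sense of lying in it. Everything else is bookkeeping.
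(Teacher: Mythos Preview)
Your proposal is correct and follows essentially the same approach as the paper: both pick a non-zero pivot $x_k\in S_k^*$, use Corollary~\ref{iso components} to see that any automorphism sending $x_k$ to $x_\ell$ must carry $S_k$ onto $S_\ell$, conclude that $\{(S_k,x_k):k\in A\}$ is a system of $1$-pivoted p.r.c. subsemigroups, and then invoke Proposition~\ref{rel-char cat}. Your version simply spells out the bipartite-graph bookkeeping in more detail than the paper's two-line argument.
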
 
 
\begin{proof}
We claim that $\{(S_k,a_k):k\in A\}$ is a system of 1-pivoted p.r.c. subsemigroups of $S$ for any $a_k\in S_k^*$, to which the result follows by Proposition \ref{rel-char cat}.
Indeed, let $\phi$ be an automorphism of $S$ such that $a_k \phi =a_l$ for some $k,l$. 
Then, by Corollary \ref{iso components}, there exists a bijection $\pi$ of $A$ with $S_k\phi=S_{k\pi}=S_l$ as required.  
\end{proof} 

Our interest is now in attaining a converse to the proposition above, since it would provide us with a   method for building `new' $\aleph_0$-categorical Rees matrix semigroups from `old'. 
With the aid of Lemma \ref{parts hard sub}, we shall prove that a converse exists in the class of Rees matrix semigroups over finite groups.
 The case where the maximal subgroups are infinite is an open problem. 

Given a pair $S=\mcal{M}^0[G;I,\Lambda;P]$ and $S'=\mcal{M}^0[G;I',\Lambda';Q]$ of Rees matrix semigroups over a group $G$, we denote $\text{Iso}(S;S')(1_{G})$ as the set of isomorphisms between $S$ and $S'$ with trivial induced group isomorphism.
 That is, $\text{Iso}(S;S')(1_{G})$ is the subset of  $\text{Iso}(S;S')$ given by 
\[  \{\phi: \exists \psi\in \text{Iso}(\Gamma(P);\Gamma(Q)) \text{ and }  u_i, v_{\lambda}\in G \text{ such that }  \phi = (1_G,\psi,(u_i)_{i\in I},(v_{\lambda})_{\lambda\in \Lambda})\}. 
\] 
If $S=S'$ we denote this simply as \aut$(S)(1_G)$, and notice that Aut$(S)(1_G)$ is a subgroup of Aut$(S)$ by Corollary \ref{induced iso}. 

\begin{lemma}\label{cat iff I equiv} Let $S=\mcal{M}^0[G;I,\Lambda;P]$ be a Rees matrix semigroup over a finite group $G$. 
Then $S$ is $\aleph_0$-categorical if and only if $S$ is $\aleph_0$-categorical over Aut$(S)(1_G)$. 
\end{lemma}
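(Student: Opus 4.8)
The plan is to prove the two directions separately, with the forward direction being trivial and the reverse direction carrying all the content.

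For the forward direction, suppose $S$ is $\aleph_0$-categorical. Since $\mathrm{Aut}(S)(1_G)$ is a subgroup of $\mathrm{Aut}(S)$ (by Corollary \ref{induced iso}), every $\sim_{S,n}$-class is a union of $\sim_{S,\mathrm{Aut}(S)(1_G),n}$-classes, but that goes the wrong way; instead I will observe directly that restricting to a subgroup can only increase the number of orbits. However, this is exactly what needs care: a priori passing to a subgroup could produce infinitely many orbits. So the forward direction is \emph{not} automatic and in fact is part of what must be shown — the real statement is that for \emph{this particular} subgroup, finiteness of orbits is equivalent. Hence I would actually prove: $S$ is $\aleph_0$-categorical over $\mathrm{Aut}(S)(1_G)$ $\Rightarrow$ $S$ is $\aleph_0$-categorical is the easy direction (a subgroup having finitely many orbits forces the whole group to have finitely many orbits, since each full orbit is a union of subgroup-orbits), and the substantive direction is the converse.

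So the heart is: assuming $S$ is $\aleph_0$-categorical, show $\mathrm{Aut}(S)(1_G)$ has finitely many orbits on $S^n$. The key idea is that an arbitrary automorphism $\phi = (\theta,\psi,(u_i),(v_\lambda))$ has an induced group automorphism $\theta \in \mathrm{Aut}(G)$, and since $G$ is finite, $\mathrm{Aut}(G)$ is finite. Given two $n$-tuples $\underline{a} \sim_{S,n} \underline{b}$ via some $\phi$ with induced group automorphism $\theta$, I want to "correct" $\phi$ to an automorphism with trivial induced group part at the cost of only finitely many choices. The plan: fix once and for all, for each $\theta \in \mathrm{Aut}(G)$, whether there exists an automorphism of $S$ with that induced group automorphism fixing appropriate data; more concretely, I would enlarge the tuples. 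Given $\underline{a},\underline{b} \in S^n$, consider the condition that $\underline{a} \sim_{S,n} \underline{b}$ together with specifying the (finitely many possible) value $\theta$ of the induced group automorphism of the witnessing $\phi$ — but $\theta$ is not determined by $\phi$, so instead I record, for each entry $(i_k,g_k,\lambda_k)$ of $\underline{a}$, the group element $g_k$ and $g_k'$ of $\underml{b}$ (finitely many choices since $G$ is finite), and ask that $\underline{a}\sim_{S,n}\underline{b}$ via \emph{some} $\phi$. Since $G$ is finite there are only finitely many possibilities for the $n$-tuple of group coordinates of $\underline{a}$ and of $\underline{b}$. The claim is then that $\underline{a}$ and $\underline{b}$ lie in the same $\mathrm{Aut}(S)(1_G)$-orbit once they lie in the same $\sim_{S,n}$-class \emph{and} have the same tuple of group coordinates. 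To see this: take $\phi = (\theta,\psi,(u_i),(v_\lambda))$ with $\underline{a}\phi = \underline{b}$; then $\theta$ must fix every $g_k$ (since the $k$th group coordinate of $\underline{a}\phi$ is $u_{i_k}(g_k\theta)v_{\lambda_k}$ and equals $g_k$ — wait, this is not literally $g_k\theta$). This is the delicate point.

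The main obstacle, therefore, is the non-uniqueness of the induced group isomorphism and the presence of the "correction" elements $u_i, v_\lambda$: matching group coordinates of tuples does not directly pin down $\theta$. I expect the resolution is to use the idempotents: by \eqref{E rees} and the explicit description $E(S) = \{(i,p_{\lambda,i}^{-1},\lambda): p_{\lambda,i}\neq 0\}$, one can augment any $n$-tuple with a bounded amount of idempotent data (using $\aleph_0$-categoricity, there are finitely many automorphism types, and $\Gamma(P)$ is $\aleph_0$-categorical by Proposition \ref{G and gamma cat}) so that the sandwich-matrix entries along a connected component become visible; then Theorem \ref{iso c0s}'s relation $p_{\lambda,i}\theta = v_\lambda q_{\lambda\psi,i\psi} u_i$ along a spanning structure of the connected component forces $\theta$ to be determined up to finitely many choices, after which one can absorb the remaining discrepancy. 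Thus the proof structure I would write is: (1) the easy direction via "subgroup orbits refine group orbits"; (2) for the hard direction, fix representatives and use finiteness of $G$, finiteness of $\mathrm{Aut}(G)$, $\aleph_0$-categoricity of $\Gamma(P)$ and of $S$, and the idempotent description to show that $\sim_{S,n}$ refined by finitely many extra conditions lands inside $\sim_{S,\mathrm{Aut}(S)(1_G),n}$; (3) conclude by the RNT-style criterion ("finitely many choices") from Section \ref{basic cat}. I anticipate step (2), specifically extracting a trivial-on-$G$ representative from an arbitrary automorphism, to be where essentially all the work lies, and I suspect the published proof defers part of this to a subsequent lemma about $\mathrm{Aut}(S)(1_G)$-orbits being controlled by $\Gamma(P)$-data plus bounded group data.
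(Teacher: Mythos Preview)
You correctly identify which direction is trivial: if $\mathrm{Aut}(S)(1_G)$ already acts oligomorphically then so does the larger group $\mathrm{Aut}(S)$. You also correctly locate the obstacle in the substantive direction, namely that the induced group automorphism $\theta$ is not determined by $\phi$ and is entangled with the correction elements $u_i,v_\lambda$. Your first idea (record the group coordinates of the tuple entries) fails for exactly the reason you note, and you are right to abandon it.

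However, your proposed resolution via idempotents, connected components and spanning-tree arguments is far more elaborate than what is needed, and as written it is only a sketch of a hope rather than an argument. The paper's proof uses a single clean trick that bypasses all of this. Fix $j\in I$, $\mu\in\Lambda$ with $p_{\mu,j}\neq 0$, enumerate $G=\{g_1,\dots,g_r\}$, and set $\underline{g}=((j,g_1,\mu),\dots,(j,g_r,\mu))$. Now augment every $n$-tuple by $\underline{g}$: if $(\underline{a},\underline{g})\sim_{S,n+r}(\underline{b},\underline{g})$ via $\phi=(\theta,\psi,(u_i),(v_\lambda))$, then $\phi$ fixes each $(j,g_k,\mu)$, forcing $g_k\theta=u_j^{-1}g_k v_\mu^{-1}$ for every $k$. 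This is the key observation you are missing: $\theta$ is now a fixed left-right translation, and one simply \emph{reparametrises} $\phi$ by setting $\bar u_i=u_iu_j^{-1}$ and $\bar v_\lambda=v_\mu^{-1}v_\lambda$, giving $\phi=(1_G,\psi,(\bar u_i),(\bar v_\lambda))\in\mathrm{Aut}(S)(1_G)$. Thus $|S^n/\!\sim_{S,\mathrm{Aut}(S)(1_G),n}|\le |S^{n+r}/\!\sim_{S,n+r}|<\aleph_0$.

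So the missing idea is not structural (no idempotents, no spanning trees, no appeal to $\Gamma(P)$): it is the reparametrisation $\theta\mapsto 1_G$ obtained by pinning down one entire $\mathcal{H}$-class. Your route might be made to work, but it would be considerably longer and is not what the paper does.
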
 

\begin{proof}
Let $S$ be $\aleph_0$-categorical with $G=\{g_1,\dots,g_r\}$ finite. Let $\overline{a}$ and $\overline{b}$ be a pair of $n$-tuples of $S$.
 For some fixed $p_{\mu,j}\neq 0$, let $\overline{g}$ be the $r$-tuple of $S$ given by 
$\overline{g}=((j,g_1,\mu),\dots,(j,g_r,\mu))$, and suppose that $(\overline{a},\overline{g})\, \sim_{S,n+r} \, (\overline{b},\overline{g})$ via $\phi=(\theta,\psi,(u_i)_{i\in I},(v_{\lambda})_{\lambda\in \Lambda})$, say.
 Then, for each $1\leq k\leq r$, we have
\[ (j,g_k,\mu)\phi=(j\psi,u_j(g_k\theta)v_{\mu},\mu\psi)=(j,g_k,\mu), 
\] 
 so that $g_k\theta=u_j^{-1}g_k v_{\mu}^{-1}$. For each $i\in I, \lambda \in \Lambda$, let $\bar{u}_i=u_iu_j^{-1}$ and $\bar{v}_{\lambda}=v_{\mu}^{-1}v_{\lambda}$.
  Then
\begin{align*} (i\psi, \bar{u}_i g_k \bar{v}_\lambda,\lambda\psi) & =(i\psi, (u_iu_j^{-1}) g_k (v_{\mu}^{-1}v_\lambda),\lambda\psi) \\
& =(i\psi,u_i(g_k\theta)v_{\lambda},\lambda\psi) \\
& =(i,g_k,\lambda)\phi, 
\end{align*} 
for any $(i,g_k,\lambda)\in S$, so that $\phi=(1_G,\psi,(\bar{u}_i)_{i\in I},(\bar{v}_\lambda)_{\lambda\in \Lambda})\in \text{Aut}(S)(1_G)$. Consequently, $(\overline{a},\overline{g})\, \sim_{S,\text{Aut}(S)(1_G),n+r} \, (\overline{b},\overline{g})$ 
and in particular $\overline{a}\, \sim_{S,\text{Aut}(S)(1_G),n} \, \overline{b}$. We have thus shown that 
\[ |S^n/\sim_{S,\text{Aut}(S)(1_G),n}|\leq |S^{n+r}/\sim_{S,n+r}|<\aleph_0,
\] 
as $S$ is $\aleph_0$-categorical. Hence $S$ is $\aleph_0$-categorical over Aut$(S)(1_G)$. 

The converse is immediate.  
\end{proof}

We are now able to prove our desired converse to Proposition \ref{cat crc} in the case where the maximal subgroups are finite. 

\begin{theorem}\label{thm G finite} Let $S=\mcal{M}^0[G;I,\Lambda;P]$ be a Rees matrix semigroup such that $G$ is finite. 
Then $S$ is $\aleph_0$-categorical if and only if each connected Rees component of $S$ is $\aleph_0$-categorical and $S$ has only finitely many connected Rees components, up to isomorphism. 
\end{theorem}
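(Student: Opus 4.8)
The forward implication is Proposition~\ref{cat crc} (recall that every Rees matrix semigroup decomposes as $S=\mathlarger{\mathlarger{\circledast}}^G_{k\in A}S_k$ with connected Rees components $S_k=\mathcal{M}^0[G;I_k,\Lambda_k;P_k]$), so assume conversely that each $S_k$ is $\aleph_0$-categorical and that, up to isomorphism, there are only finitely many of them. Put $M'=\bigcup_{k\in A}S_k$, a subsemigroup of $S$. Since the connected components of $\Gamma(P)$ are the $\Gamma(P_k)$, which by Proposition~\ref{G and gamma cat} are $\aleph_0$-categorical and which are finite in number up to isomorphism, Proposition~\ref{bg iff cc} gives that $\Gamma(P)$ is $\aleph_0$-categorical. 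As $G$ is finite, Lemma~\ref{cat iff I equiv} reduces the task to showing $|S^n/\sim_{S,\Psi^*,n}|<\aleph_0$ for all $n$, where $\Psi^*:=\text{Aut}(S)(1_G)$. Following the usual recipe I would list, for each $n$, finitely many conditions on a pair of $n$-tuples of $S$, each admitting finitely many choices, such that when all hold the two tuples are joined by an automorphism of $S$ with trivial induced group isomorphism.

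The principal step deals with the ``pure'' elements of $M'$. I claim $\{S_k:k\in A\}$ is an $(S,M';\bar A;\Psi)$-system with $\Psi_{k,\ell}:=\text{Iso}(S_k;S_\ell)(1_G)$, $\Psi=\bigcup_{k,\ell}\Psi_{k,\ell}$, and $\bar A=(A;A_1,\dots,A_s)$, the $A_t$ being the classes of the relation $k\approx\ell\iff\Psi_{k,\ell}\neq\emptyset$. Conditions (3.2) and (3.3) hold since the composition and inversion formulas of Corollary~\ref{induced iso} preserve triviality of the induced group isomorphism; (3.1) is the definition of $\approx$; and for (3.4), given $\pi\in\text{Aut}(\bar A)$ and $\phi_k\in\Psi_{k,k\pi}$, the $\phi_k$ all have induced group isomorphism $1_G$, so by Corollary~\ref{iso components} they extend to an automorphism of $S$, which lies in $\Psi^*$. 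Each $S_k$ is $\aleph_0$-categorical over $\Psi_{k,k}=\text{Aut}(S_k)(1_G)$ by Lemma~\ref{cat iff I equiv} applied to $S_k$. Finally $\bar A$ is $\aleph_0$-categorical by Corollary~\ref{set plus cat} once $s<\aleph_0$: the relation $\approx$ refines ``$S_k\cong S_\ell$'', which has finitely many classes by hypothesis, while inside a single isomorphism class, fixing isomorphisms $\phi_k\colon S_{k_0}\to S_k$, one has $k\approx\ell$ as soon as the induced group isomorphisms of $\phi_k$ and $\phi_\ell$ coincide (then $\phi_k^{-1}\phi_\ell\in\Psi_{k,\ell}$), so that class splits into at most $|\text{Aut}(G)|$ classes of $\approx$; here the finiteness of $G$ is essential. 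Lemma~\ref{parts hard sub} now yields $|(M')^n/\sim_{S,n}|<\aleph_0$, and --- since the automorphism produced in its proof lies in $\Psi^*$ --- in fact $|(M')^n/\sim_{S,\Psi^*,n}|<\aleph_0$, for every $n$.

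It remains to absorb the ``mixed'' triples $(i,g,\lambda)$ with $i\in I_k$, $\lambda\in\Lambda_\ell$ and $k\neq\ell$, which lie in $S\setminus M'$ (and cannot be obtained from $M'$ by multiplication). The key observation is that two members of $\Psi^*$ sharing the same induced bipartite graph isomorphism $\psi$ and the same restriction to $M'$ differ only by a family $(c^{(m)})_{m\in A}\in Z(G)^A$ of component-wise central twists ($u_i\mapsto u_ic^{(m)}$ for $i\in I_m$, $v_\lambda\mapsto(c^{(m)})^{-1}v_\lambda$ for $\lambda\in\Lambda_m$), and that such a twist multiplies the middle coordinate $u_igv_\lambda$ of the image $(i\psi,u_igv_\lambda,\lambda\psi)$ of a mixed triple by the central element $c^{(k)}(c^{(\ell)})^{-1}$. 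Accordingly, to a pair of $n$-tuples $\underline{a},\underline{b}$ of $S$ I would attach the conditions: their zero- and equality-patterns agree; $\Gamma(\underline{a})\sim_{\Gamma(P),2n}\Gamma(\underline{b})$ (finitely many choices, $\Gamma(P)$ being $\aleph_0$-categorical); the pure parts, enlarged by pure witnesses --- for each mixed entry $t$, an element of $S_k$ with first coordinate $i_t$ and an element of $S_\ell$ with last coordinate $\lambda_t$, so that every relevant component is seen --- are $\sim_{S,\Psi^*}$-equivalent (finitely many choices by the previous step); and, for each mixed entry, the relative twist required to match the middle coordinates, an element of the finite group $Z(G)$, takes a prescribed value. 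These are finitely many conditions with finitely many choices, and when all are met the earlier conditions give a $\psi$ and an automorphism in $\Psi^*$ correctly moving the pure parts while the recorded relative twists are realised by a consistent choice of $(c^{(m)})_{m\in A}$; the resulting $\phi\in\Psi^*$ satisfies $\underline{a}\phi=\underline{b}$. Hence $|S^n/\sim_{S,\Psi^*,n}|<\aleph_0$ for all $n$, and $S$ is $\aleph_0$-categorical by Lemma~\ref{cat iff I equiv}.

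The main obstacle is this final step: the connected Rees components cover $M'$ but not $S$, so the mixed triples must be handled separately, and the argument succeeds only because the residual freedom in a $\Psi^*$-automorphism --- the component-wise central twists, together with, inside each component, the finite sandwich-matrix gauge --- is finite. This is precisely where finiteness of $G$ is used, and explains why the infinite-maximal-subgroup case is left open. A subsidiary but necessary point is the finiteness of the partition $\bar A$, for which finiteness of $\text{Aut}(G)$ is invoked.
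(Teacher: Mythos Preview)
Your construction of the $(S,M';\bar A;\Psi)$-system with $\Psi_{k,\ell}=\text{Iso}(S_k;S_\ell)(1_G)$, the finiteness of $A/\approx$ via $|\text{Aut}(G)|<\infty$, and the application of Lemma~\ref{parts hard sub} to obtain $|(M')^n/\sim_{S,n}|<\aleph_0$ all match the paper's proof exactly.

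The divergence is in the final step. The paper never touches mixed triples: since $S$ is regular with finite maximal subgroups, \cite[Corollary~3.13]{Quinncat} says that $S$ is $\aleph_0$-categorical as soon as $|E(S)^n/\sim_{S,n}|<\aleph_0$ for every $n$. By \eqref{E rees} every idempotent lies in some connected Rees component, so $E(S)\subseteq M'$, and the bound on $(M')^n$ already obtained finishes the proof in one line. There is also no need to reduce to $\Psi^*=\text{Aut}(S)(1_G)$ at the level of $S$ itself; Lemma~\ref{cat iff I equiv} is used only on each component $S_k$.

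Your direct attack on mixed triples, by contrast, is not correct as written. The ``relative twist required to match the middle coordinates'' depends on which $\phi_0$ realises the witness condition, not just on the pair $(\underline{a},\underline{b})$, and there is no reason for it to lie in $Z(G)$. Take $G=S_3$ and two one-element components: then $\Psi^*$ acts on $M'$ by component-wise conjugation $g\mapsto u_kgu_k^{-1}$, and with witnesses $(1_1,1,1_1)$, $(1_2,1,1_2)$ on both sides the identity $\phi_0$ realises the witness condition, but the defect for the mixed entries $(1_1,g,1_2)$ and $(1_1,h,1_2)$ is $g^{-1}h$, typically non-central. These two $1$-tuples are nonetheless $\Psi^*$-equivalent (take $u_1=hg^{-1}$, $u_2=1$), so the freedom you actually have is not ``$\phi_0$ up to component-wise central twists'' but ``all of $\Psi^*$ mapping the finite witness tuple correctly'', a much larger set whose action on the middle coordinates of mixed triples you have not controlled. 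The idempotent shortcut avoids this entirely.
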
 
 
\begin{proof}  ($\Rightarrow$) Immediate from Proposition \ref{cat crc}. 

($\Leftarrow$) Since $S$ is regular with finite maximal subgroups, to prove $S$ is $\aleph_0$-categorical, it suffices by \cite[Corollary 3.14]{Quinncat} to show that $|E(S)^n/\sim_{S,n}|$ is finite, for each $n\in \mathbb{N}$. 
Let $\{S_k:k\in A\}$ be the set of connected Rees components of $S$, which is finite up to isomorphism and with each $S_k$ being $\aleph_0$-categorical.
 Define a relation $\eta$ on $A$ by $i \, \eta \, j$ if and only if $\text{\iso}(S_i;S_j)(1_G)\neq \emptyset$. By Corollary \ref{induced iso} we have that $\eta$ is an equivalence relation. 

We first prove that $A/\eta$ is finite. Suppose for contradiction that there exists an infinite set $X$ of pairwise $\eta$-inequivalent elements of $A$. 
Since $S$ has finitely many connected components up to isomorphism, there exists an infinite subset $\{i_r:r\in \mathbb{N}\}$ of $X$ such that $S_{i_n}\cong S_{i_m}$ for each $n,m$. Fix an isomorphism $\phi_{i_n}:S_{i_n}\rightarrow S_{i_1}$ for each $n\in\mathbb{N}$. 
Then as Aut$(G)$ is finite there exists distinct $n,m$ such that $\phi_{i_n}^G=\phi_{i_m}^G$, and so $\phi_{i_n}\phi_{i_m}^{-1}\in \text{\iso}(S_{i_n};S_{j_m})(1_G)$ by Corollary \ref{induced iso}. 
Hence $i_n \, \eta \, i_m$, a contradiction, and so $A/\eta$ is finite. 

Let $S'=\bigcup_{k\in A} S_k$, noting that $S'$ is the 0-direct union of the $S_k$, and in particular is a subsemigroup of $S$. 
 Let $A/\eta = \{A_1,\dots,A_r\}$ and set $\underline{A}=(A;A_1,\dots,A_r)$.
For each $i,j\in A$, let $\Psi_{i,j}= \text{\iso}(S_i;S_j)(1_G)$ and fix $\Psi=\bigcup_{i,j\in{A}}\Psi_{i,j}$.  We   prove that $\{S_k:k\in A\}$ forms an $(S;S'; \underline{A};\Psi)$-system in $S$. 
First, by our construction, if $i,j\in A_m$ for some $m$ then  $\Psi_{i,j} \neq \emptyset$, and so $\Psi$ satisfies Condition (3.1). 
Furthermore, it follows immediately from Corollary \ref{induced iso} that $\Psi$ satisfies Conditions (3.2) and (3.3). 
Finally, take any $\pi\in \text{Aut}(\underline{A})$ and, for each $k\in A$, let $\phi_k\in \Psi_{k,k\pi}$. 
Then as $\phi_k^G=1_G$ for each $k\in A$, we may construct an automorphism $\phi$ of $S$ from the set of isomorphisms $\{\phi_k:k\in A\}$ by Corollary \ref{iso components}. 
Hence, as $\phi$ extends each $\phi_k$ by construction, we have that $\{S_k:k\in A\}$ forms an $(S;S';\underline{A};\Psi)$-system as required. 
Since $S_k$ is $\aleph_0$-categorical, it is $\aleph_0$-categorical over $\Psi_{k,k}=\text{Aut}(S_k)(1_G)$ by Lemma \ref{cat iff I equiv}. By Corollary \ref{set plus cat} $\underline{A}$ is $\aleph_0$-categorical, and so 
\[ |(S')^n/\sim_{S,n}|<\aleph_0
\] 
by Lemma \ref{parts hard sub}. 
Given that $E(S)  \subseteq S'$ by \eqref{E rees}, we therefore have that
\[ |E(S)^n/\sim_{S,n}|\leq |(S')^n/\sim_{S,n}|<\aleph_0.
\]
 Hence $S$ is $\aleph_0$-categorical.   
 \end{proof}

\begin{open} Does Theorem \ref{thm G finite} hold if $G$ is allowed to be any $\aleph_0$-categorical group?
\end{open}

\subsection{Labelled bipartite graphs} 

In Example \ref{example main}, the problem which arose was that by shifting from the sandwich matrix $P=(p_{\lambda,i})$ to the induced bipartite graph $\Gamma(P)$ we have ``forgotten'' the value of the entries $p_{\lambda,i}$. 
In this subsection we extend the construction of the induced bipartite graph of a Rees matrix semigroup to attempt to rectifying this problem, as well as to build classes of $\aleph_0$-categorical Rees matrix semigroups. 
Further examples of $\aleph_0$-categorical Rees matrix semigroups can then be built using  Theorem \ref{thm G finite}. 

\begin{definition} Let $\Gamma=\langle L,R,E \rangle$ be a bipartite graph, $\Sigma$ a set, and $f\colon E\rightarrow \Sigma$ a surjective map. Then the triple $(\Gamma,\Sigma,f)$ is called a \textit{$\Sigma$-labeled (by $f$) bipartite graph}, which we denote as $\Gamma^f$.
\end{definition} 

A pair  of $\Sigma$-labeled bipartite graphs $\Gamma^f=(\Gamma, \Sigma,f)$ and $\Gamma^{f'}=(\Gamma', \Sigma,f')$ are \textit{isomorphic} if there exists an isomorphism $\psi\colon \Gamma\rightarrow \Gamma'$ which preserves labels, that is, such that  
\[ \{x,y\} f=\sigma \Leftrightarrow \{x\psi,y\psi\}f'=\sigma. 
\] 
This gives rise to a natural signature in which to consider $\Sigma$-labeled bipartite graphs as follows. 
 For each $\sigma\in \Sigma$, take a binary relation symbol $E_\sigma$ and let 
\[ L_{BG\Sigma}=L_{BG} \cup \{E_{\sigma}:\sigma\in \Sigma\}. 
\] 
Then we call $L_{BG\Sigma}$ the \textit{signature of $\Sigma$-labeled bipartite graphs}, where $(x,y)\in E_\sigma$ if and only if $\{x,y\}\in E$ and $\{x,y\}f=\sigma$.

Let $\Gamma^f$ be a $\Sigma$-labeled bipartite graph. Then for any set $\Sigma'$ and bijection $g\colon \Sigma\rightarrow \Sigma'$,  we can form a $\Sigma'$-labeling of $\Gamma$ simply by taking $\Gamma^{fg}$, which we call a \textit{relabeling of $\Gamma^f$}. Notice that if $\psi$ is an automorphism of $\Gamma$, then $\psi\in \text{Aut}(\Gamma^f)$ if and only if $\psi\in \text{Aut}(\Gamma^{fg})$. Indeed, if  $\psi\in \text{Aut}(\Gamma^f)$ then for any edge $\{x,y\}$ of $\Gamma$ we have 
\[ \{x,y\}fg=\sigma' \Leftrightarrow \{x,y\}f=\sigma'g^{-1} \Leftrightarrow \{x\psi,y\psi\}f=\sigma'g^{-1} \Leftrightarrow \{x\psi,y\psi\}fg = \sigma',
\] 
since $g$ is a bijection. The converse is proven similarly, and the following result is then immediate from the RNT.  

\begin{lemma} Let $\Gamma^f$ be a $\Sigma$-labeling of a bipartite graph $\Gamma$. Then $\Gamma^f$ is $\aleph_0$-categorical if and only if any relabeling of $\Gamma^f$ is $\aleph_0$-categorical. 
\end{lemma}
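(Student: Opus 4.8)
The plan is to observe that passing from $\Gamma^f$ to one of its relabellings leaves the automorphism group untouched, and then to appeal to the RNT. Fix a set $\Sigma'$ and a bijection $g:\Sigma\rightarrow\Sigma'$. Then $fg:E\rightarrow\Sigma'$ is again surjective, since $f$ is surjective and $g$ is a bijection, so $\Gamma^{fg}$ genuinely is a $\Sigma'$-labelled bipartite graph, built on the same vertex set $L\cup R$ as $\Gamma^f$. The displayed chain of equivalences immediately preceding the statement shows that a permutation $\psi$ of $L\cup R$ lies in $\text{Aut}(\Gamma^f)$ if and only if it lies in $\text{Aut}(\Gamma^{fg})$; hence $\text{Aut}(\Gamma^f)$ and $\text{Aut}(\Gamma^{fg})$ are one and the same permutation group acting on the common underlying set.

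Since $\Gamma^f$ and $\Gamma^{fg}$ share both their underlying set and their automorphism group, the two groups have exactly the same orbits on $(L\cup R)^n$ for every $n\geq 1$; in particular $\text{Aut}(\Gamma^f)$ is oligomorphic precisely when $\text{Aut}(\Gamma^{fg})$ is. Applying the RNT to each structure in turn, $\Gamma^f$ is $\aleph_0$-categorical if and only if $\text{Aut}(\Gamma^f)$ is oligomorphic, if and only if $\text{Aut}(\Gamma^{fg})$ is oligomorphic, if and only if $\Gamma^{fg}$ is $\aleph_0$-categorical. As $g$ was an arbitrary bijection out of $\Sigma$, this already gives the direction that $\aleph_0$-categoricity of $\Gamma^f$ implies $\aleph_0$-categoricity of every relabelling. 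The converse is immediate: taking $g=1_\Sigma$ exhibits $\Gamma^f$ itself as a relabelling of $\Gamma^f$, so if every (equivalently, some) relabelling is $\aleph_0$-categorical then so is $\Gamma^f$.

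There is essentially no obstacle here: the entire force of the lemma lies in the remark, already recorded in the paragraph before the statement, that relabelling does not alter the automorphism group, after which the result is a routine invocation of the RNT. Should one wish to be self-contained, the only line to check is that for each edge $\{x,y\}$, each automorphism $\psi$ of $\Gamma$, and each $\sigma'\in\Sigma'$ one has $\{x,y\}fg=\sigma' \Leftrightarrow \{x\psi,y\psi\}fg=\sigma'$ whenever the analogous equivalence holds with $f$ in place of $fg$; this is immediate since $g$ is injective.
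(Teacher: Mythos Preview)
Your proof is correct and follows exactly the paper's approach: the paragraph preceding the lemma already shows that $\text{Aut}(\Gamma^f)=\text{Aut}(\Gamma^{fg})$ as permutation groups on the common underlying set, and the paper then simply states that the lemma is immediate from the RNT. Your write-up just makes this inference explicit.
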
 

\begin{lemma}\label{label finite} If $\Gamma^f=(\Gamma, \Sigma,f)$ is an $\aleph_0$-categorical labeled bipartite graph then $\Sigma$ is finite and $\Gamma$ is $\aleph_0$-categorical.  
\end{lemma}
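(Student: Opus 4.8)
The plan is to use the Ryll-Nardzewski Theorem, exhibiting for each claim an equivalence relation on the appropriate power which refines the automorphism-type relation but has only finitely many classes, and deriving a contradiction from infiniteness of $\Sigma$.

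First I would show $\Sigma$ is finite. Since $f:E\to\Sigma$ is surjective, for each $\sigma\in\Sigma$ choose an edge $\{x_\sigma,y_\sigma\}\in E$ with $x_\sigma\in L$, $y_\sigma\in R$, and $\{x_\sigma,y_\sigma\}f=\sigma$. Suppose for contradiction that $\Sigma$ is infinite. Then the $2$-tuples $(x_\sigma,y_\sigma)$ for $\sigma\in\Sigma$ form an infinite family of pairs of vertices. Any automorphism $\psi$ of $\Gamma^f$ preserves each relation $E_\tau$, so if $(x_\sigma,y_\sigma)\psi=(x_{\sigma'},y_{\sigma'})$ then since $(x_\sigma,y_\sigma)\in E_\sigma$ we get $(x_{\sigma'},y_{\sigma'})\in E_\sigma$, forcing $\sigma=\sigma'$ (as $\{x_{\sigma'},y_{\sigma'}\}f=\sigma'$ and the labelling map $f$ is a function). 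Hence the pairs $(x_\sigma,y_\sigma)$ lie in pairwise distinct $2$-automorphism types, so $|\,(\Gamma^f)^2/\!\sim_{\Gamma^f,2}|$ is infinite, contradicting the $\aleph_0$-categoricity of $\Gamma^f$ by the RNT. Therefore $\Sigma$ is finite.

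Next I would show $\Gamma$ is $\aleph_0$-categorical. Now that $\Sigma=\{\sigma_1,\dots,\sigma_m\}$ is known to be finite, I observe that $\Gamma$ is a reduct of $\Gamma^f$: the edge relation $E$ of $\Gamma$ is the union $\bigcup_{j=1}^m E_{\sigma_j}$ of finitely many relations of $\Gamma^f$, while $Q_L,Q_R$ are common to both signatures. Consequently every automorphism of $\Gamma^f$ is an automorphism of $\Gamma$ (it preserves each $E_{\sigma_j}$, hence their union $E$, and the vertex sets). Thus $\operatorname{Aut}(\Gamma^f)\subseteq\operatorname{Aut}(\Gamma)$, so for every $n$ the orbits of $\operatorname{Aut}(\Gamma)$ on $\Gamma^n$ are unions of orbits of $\operatorname{Aut}(\Gamma^f)$ on $\Gamma^n$; since the latter number is finite by the RNT applied to $\Gamma^f$, so is the former. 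Hence $\operatorname{Aut}(\Gamma)$ is oligomorphic, and $\Gamma$ is $\aleph_0$-categorical by the RNT again.

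The only genuinely delicate point is the first step — making precise that distinct labels give distinct $2$-automorphism types — but this is essentially immediate once one notes that an automorphism of $\Gamma^f$ must respect every relation $E_{\sigma}$ in the signature $L_{BG\Sigma}$, and that a single edge determines its label since $f$ is a map; the rest is a routine reduct argument. In the write-up I would present the two steps in this order, as the finiteness of $\Sigma$ is what licenses treating $\Gamma$ as a reduct with a finite signature.
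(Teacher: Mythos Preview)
Your proof is correct and follows essentially the same approach as the paper: distinct labels yield distinct $2$-automorphism types (giving finiteness of $\Sigma$), and $\operatorname{Aut}(\Gamma^f)\subseteq\operatorname{Aut}(\Gamma)$ transfers oligomorphicity to $\Gamma$. One minor simplification: since $L_{BG\Sigma}=L_{BG}\cup\{E_\sigma:\sigma\in\Sigma\}$ already contains $E$, the structure $\Gamma$ is literally a reduct of $\Gamma^f$ regardless of whether $\Sigma$ is finite, so you need not invoke finiteness of $\Sigma$ to justify the second step.
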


\begin{proof} For each $\sigma \in \Sigma$, let $\{x_{\sigma},y_{\sigma}\}$ be an edge in $\Gamma$  such that $\{x_{\sigma},y_\sigma\}f=\sigma$. Then $\{(x_{\sigma},y_\sigma):\sigma\in \Sigma\}$ is a set of distinct 2-automorphism types of $\Gamma^f$, and so $\Sigma$ is finite by the RNT. 
Since automorphisms of $\Gamma^f$ induce automorphisms of $\Gamma$,  the final result is immediate from the RNT.  
\end{proof}  

A consequence of the previous pair of lemmas is that, in the context of $\aleph_0$-categoricity,  it suffices to consider finitely labeled bipartite graphs, with labeling set ${\bf m}=\{1,2,\dots,m\}$ for some $m\in \mathbb{N}$.

\begin{lemma}\label{finite L} Let $\Gamma^f=(\langle L,R,E \rangle, {\bf m},f)$ be an ${\bf m}$-labeled bipartite graph such that either $L$ or $R$ are finite. Then $\Gamma^f$ is $\aleph_0$-categorical. 
\end{lemma}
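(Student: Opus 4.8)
The plan is to show that $\Gamma^f$ is $\aleph_0$-categorical by an application of the Ryll-Nardzewski Theorem together with \cite[Lemma 2.8]{Quinncat}: I will exhibit, for each $n$, a finite list of equivalence relations on $(L\cup R)^n$ whose intersection refines $\sim_{\Gamma^f,n}$. Without loss of generality assume $R$ is finite, say $R=\{r_1,\dots,r_q\}$ (the case $L$ finite is symmetric). First I would observe that, since $R$ is finite, the sub-bipartite graph structure together with the finite labelling partitions $L$ into finitely many classes: for $x,x'\in L$ put $x\approx x'$ if and only if for every $j\in\{1,\dots,q\}$ and every $\sigma\in{\bf m}$ we have $\{x,r_j\}\in E_\sigma\Leftrightarrow\{x',r_j\}\in E_\sigma$ (and $\{x,r_j\}\notin E\Leftrightarrow\{x',r_j\}\notin E$). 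Since $q$ and $m$ are finite, $\approx$ has at most $(m+1)^q$ classes. The key point is that any bijection of $L\cup R$ fixing $R$ pointwise and permuting each $\approx$-class setwise is automatically an automorphism of $\Gamma^f$: edges and their labels are entirely determined by the $\approx$-class of the $L$-endpoint and the identity of the $R$-endpoint.

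Next I would define, for $n$-tuples, the relations to intersect. Given $n$-tuples $\underline{a}=(a_1,\dots,a_n)$ and $\underline{b}=(b_1,\dots,b_n)$ of $L\cup R$, I impose: (1) $a_s\in L\Leftrightarrow b_s\in L$ for each $s$ (i.e. $\underline a\,\sigma_{\Gamma,n}\,\underline b$, which has $2^n$ classes); (2) for each $s$ with $a_s\in R$, $a_s=b_s$ (there are only finitely many tuples of elements of the finite set $R$, so restricted to the $R$-coordinates this is a finite-index condition); (3) on the $L$-coordinates, $a_s\approx b_s$ for each such $s$ (finitely many choices since $\approx$ has finitely many classes); and (4) on the $L$-coordinates, $\natural_{L,n}$-equivalence in the sense of \eqref{natural n}, recording which $L$-entries coincide — finitely many choices. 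Under (1)–(4), the assignment $a_s\mapsto b_s$ on the $L$-entries is a well-defined injection between two subsets of $L$ that is $\approx$-class-preserving, and the $R$-entries already agree; I extend it to a bijection $\Phi$ of $L\cup R$ that fixes $R$ pointwise and permutes each $\approx$-class of $L$ setwise (possible since $\approx$-classes are all countably infinite or the matching cardinalities are forced by (3)–(4)). By the key observation above, $\Phi\in\operatorname{Aut}(\Gamma^f)$, and $\underline a\Phi=\underline b$, so $\underline a\,\sim_{\Gamma^f,n}\,\underline b$.

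Since each of conditions (1)–(4) has finitely many choices and their intersection is contained in $\sim_{\Gamma^f,n}$, the structure $\Gamma^f$ is $\aleph_0$-categorical. The main obstacle — and the only genuinely delicate point — is verifying that the map built from (1)–(4) really does extend to a bijection $\Phi$ of all of $L\cup R$ that both fixes $R$ pointwise and stabilises every $\approx$-class of $L$ setwise; this requires that the number of tuple-entries landing in each $\approx$-class of $L$ on the $\underline a$-side equals that on the $\underline b$-side, which is exactly what conditions (3) and (4) together guarantee, and then that each $\approx$-class has enough room to accommodate an extension, which holds because each is either finite of a fixed size or countably infinite. Everything else is the routine bookkeeping that appears already in the proof of Theorem \ref{RB cat}.
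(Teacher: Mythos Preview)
Your proof is correct and follows essentially the same idea as the paper's: partition the infinite side into finitely many classes according to adjacency-and-label type over the finite side, observe that any bijection fixing the finite side pointwise and preserving these classes setwise is an automorphism of $\Gamma^f$, and conclude. The only difference is packaging: the paper first reduces to $n$-tuples drawn entirely from the infinite side (via a variant of \cite[Proposition~2.11]{Quinncat}) and then appeals to Corollary~\ref{set plus cat} on the $\aleph_0$-categoricity of $(R;R_1,\dots,R_t)$, whereas you handle mixed tuples directly by your conditions (1)--(4), unpacking what Corollary~\ref{set plus cat} would give. Your version is a little more self-contained; the paper's is a little shorter by leaning on those two auxiliary results.
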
  

\begin{proof}
Without loss of generality assume that $L=\{l_1,l_2,\dots, l_r\}$ is finite. Define a relation $\tau$ on $R$ by $y \, \tau \, y'$ if and only if $y$ and $y'$ are adjacent to the same elements in $L$ and $\{l_i,y\}f = \{l_i,y'\}f$ for each such $l_i\in L$. Note that since both $L$ and {\bf m} are finite, $R$ has  finitely many $\tau$-classes, say $R_1,\dots,R_t$. 
Considering $R$ simply as a set, fix $\mcal{A}=(R;R_1,\dots,R_t)$. 

Since $L$ is finite, to prove the $\aleph_0$-categoricity of $\Gamma^f$   it suffices to show that $(\Gamma^f\setminus L)^n= R^n$ has finitely many $\sim_{\Gamma^f,n}$-classes for each $n\in \mathbb{N}$ by a simple generalization of \cite[Proposition 2.11]{Quinncat}. 
Let $\overline{a}=(r_1,\dots,r_n)$ and $\overline{b}=(r_1',\dots,r_n')$ be $n$-tuples of $R$ such that $\overline{a} \, \sim_{\mcal{A},n} \, \overline{b}$ via $\psi\in \text{Aut}(\mcal{A})$, say.
 We claim that the map $\hat{\psi}\colon \Gamma^f\rightarrow \Gamma^f$ which fixes $L$ and is such that $\hat{\psi}|_R=\psi$ is an automorphism of $\Gamma^f$.
  Indeed, as $\psi$ setwise fixes the $\tau$-classes, we have $(r,r\psi)\in \tau$ for each $r\in R$. Hence $r$ and $r\psi$ are adjacent to the same elements in $L$, and so
\[ \{l_i,r\}\in E \Leftrightarrow \{l_i,r\psi\} \in E \Leftrightarrow \{l_i\hat{\psi},r\hat{\psi}\}\in E,
\]
so that $\hat{\psi}$ is an automorphism of $\Gamma$. Similarly $\{l_i,r\}f=\{l_i,r\psi\}f=\{l_i\hat{\psi},r\hat{\psi}\}f$, so that $\hat{\psi}$ preserves labels. This proves the claim. 

For each $1\leq k \leq n$ we have $r_k\hat{\psi}= r_k\psi=r_k'$, so that $\overline{a} \, \sim_{\Gamma^f,n} \, \overline{b}$. Consequently,  
\[ |(\Gamma^f\setminus L)^{n}/\sim_{\Gamma^f,n}| \leq |\mcal{A}^n/\sim_{\mcal{A},n}|.
\] 
The set extension $\mcal{A}$ is $\aleph_0$-categorical by  Corollary \ref{set plus cat}, and so $|\mcal{A}^n/\sim_{\mcal{A},n}|$ is finite for each $n\geq 1$. Hence $\Gamma^f$ is $\aleph_0$-categorical.  
\end{proof}

\begin{lemma}\label{finite label} Let $\Gamma^f=(\langle L,R,E \rangle, {\bf m},f)$ be such that there exists $p\in {\bf m}$ with $\{x,y\}f=p$ for all but finitely many edges in $\Gamma$. Then $\Gamma^f$ is $\aleph_0$-categorical if and only if $\Gamma$ is $\aleph_0$-categorical. 
\end{lemma}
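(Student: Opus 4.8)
The plan is to take the forward implication from Lemma~\ref{label finite} and concentrate on the converse. Indeed, if $\Gamma^f$ is $\aleph_0$-categorical then $\Gamma$ is $\aleph_0$-categorical by Lemma~\ref{label finite} (this direction does not use the hypothesis on $p$). So suppose henceforth that $\Gamma$ is $\aleph_0$-categorical, and let $E_0\subseteq E$ be the finite set of edges $\{x,y\}$ with $\{x,y\}f\neq p$. Let $V_0$ be the (finite) set of all vertices incident to some edge of $E_0$, and fix an enumeration $\underline{v}=(v_1,\dots,v_t)$ of $V_0$.

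The key observation I would establish is: \emph{every automorphism of $\Gamma$ which fixes $V_0$ pointwise is an automorphism of $\Gamma^f$.} First, such a $\phi$ fixes each edge of $E_0$ setwise, since the two endpoints of an edge of $E_0$ lie in $V_0$; hence $\phi$ preserves the label of every edge of $E_0$, and moreover maps the set $E_0$ onto itself. Secondly, $\phi$ cannot send an edge of label $p$ to an edge of $E_0$: if $\{x,y\}f=p$ but $\{x\phi,y\phi\}\in E_0$, then, applying $\phi^{-1}$ — which also fixes $V_0$ pointwise and hence fixes $E_0$ setwise — we get $\{x,y\}=\{x\phi,y\phi\}\phi^{-1}\in E_0$, contradicting $\{x,y\}f=p$. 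Thus $\phi$ maps label-$p$ edges to label-$p$ edges, and together with the previous point $\phi$ preserves $f$ completely, so $\phi\in\text{Aut}(\Gamma^f)$.

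Now I would run a type-counting argument. For $n$-tuples $\underline{a},\underline{b}$ of $\Gamma^f$ (equivalently, of $\Gamma$), declare $\underline{a}\approx\underline{b}$ if and only if $(\underline{a},\underline{v})\,\sim_{\Gamma,n+t}\,(\underline{b},\underline{v})$; this is an equivalence relation (it is the orbit relation of the pointwise stabiliser of $V_0$ in $\text{Aut}(\Gamma)$). Since $\Gamma$ is $\aleph_0$-categorical, the number of $\approx$-classes of $\Gamma^n$ is bounded by $|\Gamma^{n+t}/\sim_{\Gamma,n+t}|$, which is finite. On the other hand, if $\underline{a}\approx\underline{b}$, witnessed by $\phi\in\text{Aut}(\Gamma)$ with $\underline{a}\phi=\underline{b}$ and $\underline{v}\phi=\underline{v}$, then $\phi$ fixes $V_0$ pointwise, so $\phi\in\text{Aut}(\Gamma^f)$ by the key observation, giving $\underline{a}\,\sim_{\Gamma^f,n}\,\underline{b}$. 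Hence $\approx\,\subseteq\,\sim_{\Gamma^f,n}$, so $|\Gamma^n/\sim_{\Gamma^f,n}|\leq|\Gamma^n/\approx|<\aleph_0$ for every $n$, and $\Gamma^f$ is $\aleph_0$-categorical by the RNT.

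The main obstacle is the second half of the key observation: making sure that an automorphism of $\Gamma$ fixing $V_0$ pointwise cannot ``create'' an exceptional label by mapping a $p$-labelled edge onto one of the finitely many edges of $E_0$ — the resolution being simply that $\phi^{-1}$ also fixes $V_0$ pointwise and hence leaves $E_0$ invariant. Once this is in place, the remainder is a routine application of the RNT together with the standard fact (used here through the bound $|\Gamma^n/\approx|\leq|\Gamma^{n+t}/\sim_{\Gamma,n+t}|$) that naming finitely many elements of an $\aleph_0$-categorical structure preserves $\aleph_0$-categoricity.
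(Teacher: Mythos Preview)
Your proof is correct and follows essentially the same approach as the paper: pin down the finitely many endpoints of the exceptionally-labelled edges, observe that any automorphism of $\Gamma$ fixing these vertices pointwise preserves all labels (hence lies in $\text{Aut}(\Gamma^f)$), and bound the number of $\sim_{\Gamma^f,n}$-classes by the number of $\sim_{\Gamma,n+t}$-classes via appending the fixed tuple. Your treatment of the ``cannot create an exceptional label'' step via $\phi^{-1}$ is slightly more explicit than the paper's, but the argument is the same.
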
 

\begin{proof} 
 Suppose $\Gamma$ is $\aleph_0$-categorical, and that $\{l_1,r_1\},\dots, \{l_t,r_t\}$ are precisely the edges of $\Gamma$ such that $\{l_k,r_k\}f\neq p$, where $l_k\in L$ and $r_k\in R$. Let $\overline{a}$ and $\overline{b}$ be $n$-tuples of $\Gamma^f$  such that 
\[ (\overline{a},l_1,r_1,\dots,l_t,r_t) \, \sim_{\Gamma,n+2t} \, (\overline{b},l_1,r_1,\dots,l_t,r_t)
\] 
via $ \psi\in \text{Aut}(\Gamma)$, say. 
We claim that $\psi$ is an automorphism of $\Gamma^f$. For each $1\leq k \leq t$ we have  $l_k\psi=l_k$ and $r_k\psi=r_k$ so that
\[\{l_k,r_k\}f= \{l_k\psi,r_k\psi\}f. 
\]
It follows that $\{l,r\}f=p$ if and only if $\{l\psi,r\psi\}f=p$, and so $\psi$ preserves all labels, thus proving the claim. Consequently, $\overline{a} \, \sim_{\Gamma^f,n} \, \overline{b}$ via $\psi$, so that
\[ |(\Gamma^f)^{n} /\sim_{\Gamma^f,n}| \leq |\Gamma^{n+2t}/\sim_{\Gamma,n+2t}|<\aleph_0
\] 
by the $\aleph_0$-categoricity of $\Gamma$. Hence $\Gamma^f$ is $\aleph_0$-categorical. 

The converse is immediate from Lemma \ref{label finite}.  
\end{proof}

\begin{definition} Given a Rees matrix semigroup $S=\mcal{M}^0[G;I,\Lambda;P]$, we form a $G(P)$-labeling of the induced bipartite graph  $\Gamma(P) = \langle I,\Lambda, E\rangle$ of $S$ in the natural way by taking the labeling $f\colon E\rightarrow G(P)$ given by 
\[ \{i,\lambda\}f = p_{\lambda,i}.
\] 
We denote the labeled bipartite graph by $\Gamma(P)^l$, which we call the \textit{induced labeled bipartite graph} of $S$.
\end{definition} 

 Note that, unlike the corresponding case for the induced bipartite graph $\Gamma(P)$,  there exist isomorphic Rees matrix semigroups with non-isomorphic induced labeled bipartite graphs. 
 For example, let $G$ be a non-trivial group and $P$ and $Q$  be $\mathbf{1} \times \mathbf{2}$ matrices over $G\cup \{0\}$ given by 
\[ P =  \left( \begin{array}{cc}
1 & a 
\end{array} \right)
\quad Q = \left( \begin{array}{cc}
1 & 1 
\end{array} \right)
\]
where $a\notin \{0,1\}$. Let $S=\mcal{M}^0[G;\mathbf{2},\mathbf{1};P]$ and  $T=\mcal{M}^0[G;\mathbf{2},\mathbf{1};Q]$, noting that $\Gamma(P)=\Gamma(Q)$ (and are isomorphic to $K_{2,1}$).
 Then $(1_G,1_{\Gamma(P)},(u_i)_{i\in \mathbf{2}},(v_{\lambda})_{\lambda\in \mathbf{1}})$ is an isomorphism from $S$ to $T$, where $u_1=1=v_1$, and $u_2=a$. 
 However, since $\Gamma(P)^l$ and $\Gamma(Q)^l$ have different labeling sets, they are not isomorphic. 

\begin{proposition}\label{label cat rees} Let $S=\mcal{M}^0[G;I,\Lambda;P]$ be a Rees matrix semigroup such that $G$ and $\Gamma(P)^l$ are $\aleph_0$-categorical. 
Then $S$ is $\aleph_0$-categorical. 
\end{proposition}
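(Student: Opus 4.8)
The plan is to apply the Ryll--Nardzewski Theorem directly, via \cite[Lemma~2.8]{Quinncat}: for each $n$ I will list finitely many conditions on pairs of $n$-tuples of $S$ whose conjunction forces automorphic equivalence. The key preliminary observation is that one need not use the parameters $u_i,v_\lambda$ in the isomorphism theorem at all. Write $G(P)=\{c_1,\dots,c_m\}$ for the set of non-zero entries of $P$, which is finite by Lemma~\ref{label finite}, and let $H=\{\theta\in\text{Aut}(G):c\theta=c\text{ for all }c\in G(P)\}$. If $\psi\in\text{Aut}(\Gamma(P)^l)$, that is, $\psi$ is a \emph{label-preserving} automorphism of $\Gamma(P)$, and $\theta\in H$, then $\phi=(\theta,\psi,(1_G)_{i\in I},(1_G)_{\lambda\in\Lambda})$ is an automorphism of $S$: whenever $p_{\lambda,i}\neq 0$ the edge $\{i,\lambda\}$ carries the label $p_{\lambda,i}$, so label-preservation of $\psi$ gives $p_{\lambda\psi,i\psi}=p_{\lambda,i}$, and since $\theta$ fixes $p_{\lambda,i}\in G(P)$ the isomorphism condition of Theorem~\ref{iso c0s} reads $p_{\lambda,i}\theta=p_{\lambda,i}=1_G\cdot p_{\lambda\psi,i\psi}\cdot 1_G$, which holds.

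Now fix $n$ and let $\underline a,\underline b$ be $n$-tuples of $S$. First impose $a_k=0\Leftrightarrow b_k=0$ (finitely many choices); as every automorphism of $S$ fixes $0$, this reduces matters to $n$-tuples from $S^*=I\times G\times\Lambda$ (cf. \cite[Proposition~2.11]{Quinncat}), so write $a_k=(i_k,g_k,\lambda_k)$ and $b_k=(j_k,h_k,\mu_k)$. Next impose (C1): $\Gamma(\underline a)\,\sim_{\Gamma(P)^l,2n}\,\Gamma(\underline b)$ in the notation of Notation~\ref{tuple not}; this has finitely many choices since $\Gamma(P)^l$ is $\aleph_0$-categorical, and a witness $\psi$ is a label-preserving automorphism of $\Gamma(P)$ with $i_k\psi=j_k$ and $\lambda_k\psi=\mu_k$ for all $k$, which in particular restricts to bijections of $I$ and of $\Lambda$. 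Then impose (C2): $(g_1,\dots,g_n)\,\sim_{G,H,n}\,(h_1,\dots,h_n)$; since $G$ is $\aleph_0$-categorical and $H$ is the pointwise stabiliser in $\text{Aut}(G)$ of the finite set $G(P)$, $G$ is $\aleph_0$-categorical over $H$ (apply \cite[Lemma~2.6]{Quinncat} to the elements of $G(P)$ in turn), so this has finitely many choices, with a witness $\theta\in H$ satisfying $g_k\theta=h_k$. By the first paragraph $\phi=(\theta,\psi,(1_G)_{i\in I},(1_G)_{\lambda\in\Lambda})$ is an automorphism of $S$, and $a_k\phi=(i_k\psi,g_k\theta,\lambda_k\psi)=(j_k,h_k,\mu_k)=b_k$ for each $k$; hence $\underline a\,\sim_{S,n}\,\underline b$. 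All imposed conditions having finitely many choices, $S$ is $\aleph_0$-categorical.

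The real content, and the step I expect to need the most care, is the reduction in the first paragraph to automorphisms with trivial $u_i,v_\lambda$. A naive approach would match the group coordinates only up to $\text{Aut}(G)$ and then be forced to realise the chosen $\theta$ by an actual automorphism of $S$, contending with the equations $p_{\lambda,i}\theta=v_\lambda\,p_{\lambda\psi,i\psi}\,u_i$; these couple the $u_i,v_\lambda$ across each connected Rees component and are exactly what obstructed the unlabelled analogue in Example~\ref{eg bg}. Requiring $\theta$ to fix $G(P)$ pointwise trivialises them, which is affordable precisely because $\aleph_0$-categoricity of $\Gamma(P)^l$ forces $G(P)$ to be finite (Lemma~\ref{label finite}). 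The remaining points --- that a label-preserving $\psi$ with the prescribed action exists exactly when (C1) holds, and that any such $\psi$ preserves the sides $I$ and $\Lambda$ (being a bipartite-graph automorphism) --- are immediate.
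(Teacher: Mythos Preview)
Your proof is correct and follows essentially the same approach as the paper's: both note that $G(P)$ is finite, pick a label-preserving $\psi$ via the $\aleph_0$-categoricity of $\Gamma(P)^l$, pick $\theta\in\text{Aut}(G)$ fixing $G(P)$ pointwise, and assemble the automorphism $(\theta,\psi,(1)_{i\in I},(1)_{\lambda\in\Lambda})$. The only cosmetic difference is that the paper encodes your condition (C2) by adjoining the elements $x_1,\dots,x_r$ of $G(P)$ to the group tuple and asking for $(g_1,\dots,g_n,x_1,\dots,x_r)\sim_{G,n+r}(h_1,\dots,h_n,x_1,\dots,x_r)$, whereas you phrase it as $\aleph_0$-categoricity over the pointwise stabiliser $H$; these are the same condition.
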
  

\begin{proof} Since $\Gamma(P)^l$ is $\aleph_0$-categorical, the set $G(P)$ is finite by Lemma \ref{label finite}, say $G(P)=\{x_1,\dots,x_r\}$. Consider a pair of $n$-tuples $\overline{a}=((i_1,g_1,\lambda_1),\dots, (i_n,g_n,\lambda_n))$ and $\overline{b}=((j_1,h_1,\mu_1),\dots, (j_n,h_n,\mu_n))$   of $S^*$ under the pair of conditions that 
\begin{itemize}
\item[(1)] $(g_1,\dots,g_n,x_1,\dots,x_r) \, \sim_{G,n+r} \, (h_1,\dots ,h_n,x_1,\dots,x_r)$,
\item[(2)] $\Gamma(\overline{a}) \, \sim_{\Gamma(P)^l,2n} \, \Gamma(\overline{b})$,
\end{itemize}
via $\theta\in \text{Aut}(G)$ and  $\psi\in \text{Aut}(\Gamma(P)^l)$, respectively (noting the use of Notation \ref{tuple not} here).  
We claim that $\phi=(\theta,\psi,(1)_{i\in I},(1)_{\lambda\in \Lambda})$ is an automorphism of $S$.
 Indeed, if $p_{\lambda,i}\neq 0$ for some $i\in I,\lambda\in \Lambda$, then  $p_{\lambda,i}=x_k$ for some $k$, so that $\{i,\lambda\}f=\{i\psi,\lambda\psi\}f=x_k$.
 Consequently,  
\[  p_{\lambda,i}\theta=x_k\theta=x_k = p_{\lambda\psi,i\psi}, 
\] 
and the claim  follows by Theorem \ref{iso c0s}.
 Hence 
\[ (i_t,g_t,\lambda_t)\phi=(i_t\psi,g_t\theta,\lambda_t\psi)=(j_t,h_t,\mu_t)
\] 
 for each $1\leq t \leq n$, so that
\[ |(S^*)^n/\sim_{S,n}|\leq |G^{n+r}/\sim_{G,n+r}|\cdot |(\Gamma(P)^l)^{2n}/\sim_{\Gamma(P)^l,2n}|<\aleph_0,
\] 
as $G$ and $\Gamma(P)^l$ are $\aleph_0$-categorical.
 Hence $S$ is $\aleph_0$-categorical by \cite[Proposition 2.11]{Quinncat}.  
\end{proof}

 The proposition above enables us to produce concrete   examples of $\aleph_0$-categorical Rees matrix semigroups. 
For example, the result below is immediate from Lemma \ref{finite L}. 

\begin{corollary} Let $S$ be a Rees matrix semigroup over an $\aleph_0$-categorical group having sandwich matrix $P$ with finitely many rows or columns, and $G(P)$ being finite. 
Then $S$ is $\aleph_0$-categorical.
\end{corollary}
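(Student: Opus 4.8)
The plan is to reduce the statement to an application of Proposition~\ref{label cat rees}. We are given a Rees matrix semigroup $S=\mathcal{M}^0[G;I,\Lambda;P]$ over an $\aleph_0$-categorical group $G$, where the sandwich matrix $P$ has only finitely many rows or finitely many columns, and $G(P)$ is finite. By Proposition~\ref{label cat rees}, it suffices to show that $G$ and the induced labelled bipartite graph $\Gamma(P)^l$ are both $\aleph_0$-categorical. The first is given by hypothesis, so the whole task is to verify that $\Gamma(P)^l$ is $\aleph_0$-categorical.

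First I would observe that $\Gamma(P)^l=(\langle I,\Lambda,E\rangle,G(P),f)$ is, by construction, a $G(P)$-labelled bipartite graph with labelling map $f$ given by $\{i,\lambda\}f=p_{\lambda,i}$. Since $G(P)$ is finite by hypothesis, after a relabelling (harmless by the preceding lemma on relabellings, or simply by fixing a bijection $G(P)\to\mathbf{m}$ where $m=|G(P)|$) we may regard $\Gamma(P)^l$ as an $\mathbf{m}$-labelled bipartite graph. Now $\Gamma(P)=\langle I,\Lambda,E\rangle$ has left set $I$ and right set $\Lambda$; the hypothesis that $P$ has finitely many rows or finitely many columns means exactly that $\Lambda$ is finite or $I$ is finite. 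Either way, one of the two vertex sets of the bipartite graph underlying $\Gamma(P)^l$ is finite.

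This is precisely the situation of Lemma~\ref{finite L}: an $\mathbf{m}$-labelled bipartite graph $(\langle L,R,E\rangle,\mathbf{m},f)$ in which $L$ or $R$ is finite is automatically $\aleph_0$-categorical. Applying that lemma to $\Gamma(P)^l$ immediately gives that $\Gamma(P)^l$ is $\aleph_0$-categorical. Combined with the $\aleph_0$-categoricity of $G$, Proposition~\ref{label cat rees} yields that $S$ is $\aleph_0$-categorical, completing the proof.

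There is essentially no obstacle here: the corollary is a direct specialisation assembled from results already proved in the excerpt, and the only things to be careful about are the bookkeeping remarks — that ``finitely many rows or columns of $P$'' translates correctly into ``$I$ or $\Lambda$ finite'' (recall $P$ is a $\Lambda\times I$ matrix, so rows are indexed by $\Lambda$ and columns by $I$), and that $G(P)$ finite is what lets us pass to a finite label set $\mathbf{m}$ so that Lemma~\ref{finite L} applies verbatim. The proof is therefore short: cite Lemma~\ref{finite L} to get $\Gamma(P)^l$ $\aleph_0$-categorical, then cite Proposition~\ref{label cat rees}.
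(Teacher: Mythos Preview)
Your proposal is correct and follows exactly the paper's approach: the corollary is stated as immediate from Lemma~\ref{finite L} (together with Proposition~\ref{label cat rees}), and you have simply spelled out the bookkeeping details the paper leaves implicit.
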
 

Similarly, Lemma \ref{finite label} may be used in conjunction with Proposition \ref{label cat rees} to obtain: 

\begin{corollary}\label{finite entries} Let $S=\mcal{M}^0[G;I,\Lambda;P]$ be a Rees matrix semigroup such that $G$ and $\Gamma(P)$ are $\aleph_0$-categorical, and all but finitely many of the non-zero entries of $P$ are the identity of $G$. Then $S$ is $\aleph_0$-categorical.
\end{corollary}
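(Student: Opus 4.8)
The plan is to deduce the statement from Proposition~\ref{label cat rees}: since $G$ is already assumed $\aleph_0$-categorical, it suffices to show that the induced labelled bipartite graph $\Gamma(P)^l$ is $\aleph_0$-categorical.

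First I would record that the labelling set $G(P)$ is finite. By hypothesis there is a finite subset $F$ of $G$ containing every non-identity non-zero entry of $P$, so $G(P)\subseteq F\cup\{1_G\}$ is finite. Since, as observed in the discussion preceding Lemma~\ref{label finite}, the $\aleph_0$-categoricity of a labelled bipartite graph is unchanged under relabelling, we may fix a bijection $g\colon G(P)\to {\bf m}$ with $m=|G(P)|$ and work instead with the relabelling $\Gamma(P)^{lg}$. Now set $p=1_G\cdot g$ if $1_G\in G(P)$, and let $p$ be arbitrary otherwise. In the first case all but finitely many non-zero entries of $P$ equal $1_G$, so all but finitely many edges of $\Gamma(P)^{lg}$ carry the label $p$; in the second case $P$ has only finitely many non-zero entries, hence $\Gamma(P)$ has only finitely many edges and the same conclusion holds (vacuously). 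Either way Lemma~\ref{finite label} applies, giving that $\Gamma(P)^{lg}$ is $\aleph_0$-categorical if and only if $\Gamma(P)$ is. The latter holds by hypothesis, so $\Gamma(P)^{lg}$, and therefore $\Gamma(P)^l$, is $\aleph_0$-categorical, and Proposition~\ref{label cat rees} yields that $S$ is $\aleph_0$-categorical.

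I do not expect a genuine obstacle here; the argument is a short assembly of Lemma~\ref{finite label} and Proposition~\ref{label cat rees}. The only point requiring a little care is the degenerate case where $1_G$ is not itself an entry of $P$ (so that $P$ has only finitely many non-zero entries and both $I$ and $\Lambda$ are finite), which is handled by applying Lemma~\ref{finite label} with its exceptional edge set taken to be the entire finite edge set — or, alternatively, by simply noting that a finite labelled bipartite graph is trivially $\aleph_0$-categorical.
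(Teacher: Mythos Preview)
Your proposal is correct and follows exactly the route the paper indicates: the corollary is stated immediately after the sentence ``Similarly, Lemma~\ref{finite label} may be used in conjunction with Proposition~\ref{label cat rees} to obtain'', and your argument is precisely that combination. Your explicit relabelling to~${\bf m}$ and your treatment of the degenerate case $1_G\notin G(P)$ are details the paper leaves implicit, but they are handled correctly.
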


However, the converse to Proposition \ref{label cat rees}    fails to hold in general, and a counterexample will be constructed  later in the next subsection. 
The idea is that any $\mcal{M}^0[G;I,\Lambda;P]$ in which $G(P)$ is infinite forces $\Gamma(P)^l$ to be non $\aleph_0$-categorical by Lemma \ref{label cat rees}. 

\begin{open}\label{open 1}  Does there exist an $\aleph_0$-categorical connected Rees matrix semigroup with $G(P)$ finite which is not isomorphic to a Rees matrix semigroup with $\aleph_0$-categorical induced labeled bipartite graph? 
\end{open}

We prove that the open problem has a negative answer for the case of completely simple semigroups. Given a completely simple semigroup $\mcal{M}[G;I,\Lambda;P]$, we call $P$ \textit{normal} if there exist $i\in I$ and $\lambda\in \Lambda$ such that $p_{\mu,i}=p_{\lambda,j}=1$ for every $j\in I$ and $\mu\in \Lambda$. 
Every completely simple semigroup is isomorphic to a Rees matrix semigroup without zero in which the sandwich matrix is normal \cite{Howie94}. 

\begin{proposition}\label{prop:css} Let $\mcal{M}[G;I,\Lambda;P]$ be an $\aleph_0$-categorical completely simple semigroup in which  $P$ is normal and $G(P)$ is finite. 
Then $\Gamma(P)^l$ is $\aleph_0$-categorical. 
\end{proposition} 

\begin{proof} Suppose $P$ is normalised via $i^*\in I$ and $\lambda^*\in\Lambda$.   Since $G(P)$ is finite we may fix some finite subsets $I'=\{x_1,\dots,x_p\}\subseteq I$ and $\Lambda'=\{y_1,\dots,y_q\}\subseteq \Lambda$ such that the $\Lambda'\times I'$ submatrix of $P$ contains every element of $G(P)$. Let $\overline{x}$ be the $pq$-tuple of $S$ given by  
\[  ((x_1,1,y_1),(x_1,1,y_2),\dots,(x_1,1,y_q), (x_2,1,y_1),\dots,(x_p,1,y_q)),
\]  Using the notation of Proposition \ref{G and gamma cat}, let $\overline{a}=(a_1,\dots,a_n)$ and $\overline{b}=(b_1,\dots,b_n)$ be a pair of $\sigma_{\Gamma(P)^l,n}$-related $n$-tuples of $\Gamma(P)^l$. Let $i_1<i_2<\cdots <i_s$ and $j_1<j_2<\cdots<j_t$ be the indexes of entries of $\overline{a}$ (and thus $\overline{b}$) lying in $I$ and $\Lambda$, respectively. Suppose further that there exists $i\in I$ and $\lambda\in \Lambda$ such that the $n+pq+1$-tuples
\begin{align*}
& ((a_{i_1},1,\lambda),\dots,(a_{i_s},1,\lambda),(i,1,a_{j_1}),\dots,(i,1,a_{j_t}), \overline{x},(i^*,1,\lambda^*)) \text{ and }  \\
& ((b_{i_1},1,\lambda),\dots,(b_{i_s},1,\lambda),(i,1,b_{j_1}),\dots,(i,1,b_{j_t}),\overline{x},(i^*,1,\lambda^*)) 
\end{align*}
are automorphically equivalent via $\phi=[\theta,\psi,(u_i)_{i\in I},(v_{\lambda})_{\lambda\in \Lambda}] \in \text{Aut}(S)$, say. Then $\psi$ is an automorphism of $\Gamma(P)$ which maps $\overline{a}$ to $\overline{b}$. We aim to show that $\psi$ preserves labels, i.e., $p_{\lambda,i}=p_{\lambda\psi,i\psi}$ for every $i\in I, \lambda\in \Lambda$. Since $\phi$ fixes $(i^*,1,\lambda^*)$ we have by \cite[Corollary 4.9]{Quinncss}  that there exists $g\in G$ with $u_i=g$ and $v_{\lambda}=g^{-1}$ for every $i\in I, \lambda\in \Lambda$. Since $\psi$ fixes $x_1,\dots,x_p,y_1,\dots,y_q$ we have $$p_{y_k,x_{\ell}}\theta = v_{y_k} p_{y_k\psi, x_{\ell}\psi} u_{x_{\ell}} = g^{-1} p_{y_k,x_{\ell}} g.$$
Consequently, as every $p_{\lambda,i}$ is equal to some $p_{y_k,x_{\ell}}$, we have $p_{\lambda,i}\theta = g^{-1} p_{\lambda,i} g$ for every $i\in I$, $\lambda\in \Lambda$. However, $p_{\lambda,i}\theta=v_\lambda p_{\lambda\psi,i\psi}u_i=g^{-1} p_{\lambda\psi,i\psi} g$, and hence $\psi$ preserves labels as required. 
We have thus shown that 
\[ (\Gamma(P)^l)^n / \sim_{\Gamma(P)^l,n}| \leq |S^{n+pq+1}/\sim_{S, n+pq+1}| <\aleph_0
\]
as $S$ is $\aleph_0$-categorical.  
\end{proof}

The sandwich matrix $P$ of a Rees matrix semigroup $\mcal{M}^0[G;I,\Lambda;P]$ can also always be normalised, but it is necessarily more complex. We can restate Open Problem \ref{open 1} as follows:

\begin{open}  If $\mcal{M}^0[G;I,\Lambda;P]$ is $\aleph_0$-categorical, where   $P$ is normal and $G(P)$ is finite, then is $\Gamma(P)^l$   $\aleph_0$-categorical?
\end{open} 
 
Notice that in Example \ref{example main}, the labeled bipartite graph is clearly not $\aleph_0$-categorical since each $i_k$ is adjacent to exactly $k$ vertices in which the edge is labeled by $a$. By construction the matrix $P$ is normal via row $\alpha_0$ and column $i_0$, and hence $S$ is not $\aleph_0$-categorical by the proposition above.

 \subsection{Pure completely 0-semigroups} 
 
 Following \cite{Jackson}, we call a completely 0-simple semigroup $S$ \textit{pure} if it is isomorphic to a Rees matrix semigroup with sandwich matrix over $\{0,1\}$. 
  In \cite{Houghton77}, Houghton considered \textit{trivial cohomology classes} of Rees matrix semigroups, a property which is proven in Section 2 of his article to be equivalent to being pure. Hence, by \cite[Theorem 5.1]{Houghton77}, a completely 0-simple semigroup is pure if and only if, for each $a,b\in S$, 
\[ [a,b\in \langle E(S) \rangle \text{ and } a \, \mcal{H} \, b] \Rightarrow a=b.
\]
It follows that all orthodox completely 0-simple semigroups are necessarily pure, but the converse is not true in general. Indeed, a completely 0-simple semigroup is orthodox if and only if it is isomorphic to a Rees matrix semigroup with sandwich matrix over $\{0,1\}$ and with induced bipartite graph a disjoint union of complete bipartite graphs \cite[Theorem 6]{Hall}. Hence, in this case, it can be easily shown that the isomorphism types of the connected Rees components depends only on the isomorphism types of the induced (complete) bipartite graphs. 

We observe that if the sandwich matrix of a Rees matrix semigroup is over $\{0,1\}$ then $\Gamma(P)^l$ is simply labeled by $\{1\}$. 
Therefore all automorphisms of $\Gamma(P)$ automatically preserve the labeling, and so $\Gamma(P)^l$ is $\aleph_0$-categorical if and only if $\Gamma(P)$ is $\aleph_0$-categorical. 
The equivalence of statements (1),(3), and (4) in the result below therefore follow from Propositions \ref{G and gamma cat} and \ref{label cat rees}. For the interest of the reader we give an alternative proof of (4) $\Rightarrow$ (1) using results in \cite{Quinncat}.  


\begin{lemma}\label{lemm:new} Let $S=\mcal{M}^0[G;I,\Lambda;P]$ be a  pure Rees matrix semigroup. Then the following are equivalent: 
\begin{enumerate}
\item[(1)] $S$ is $\aleph_0$-categorical; 
\item[(2)]  $G$ and $\langle E(S)\rangle $ are $\aleph_0$-categorical; 
\item[(3)] $G$ and $\Gamma(P)$ are $\aleph_0$-categorical; 
\item[(4)] $G$ and $\mcal{M}^0[\{1\};I,\Lambda;P]$ are $\aleph_0$-categorical. 

\end{enumerate}  
\end{lemma} 

\begin{proof}
 (1) $\Rightarrow$ (2) If $S$ is $\aleph_0$-categorical then so is $G$ by Proposition \ref{G and gamma cat}. Clearly $E(S)$ is preserved by  automorphisms of $S$, and hence $\langle E(S) \rangle$ is a characteristic subsemigroup of $S$, and thus inherits $\aleph_0$-categoricity. 
 
(2) $\Rightarrow$ (3)  Suppose that $\langle E(S) \rangle=\langle \{(i,1,\lambda):p_{\lambda,i}\neq 0\}\cup \{0\}\rangle$ is $\aleph_0$-categorical.  
Let $S_k=\mcal{M}^0[G;I_k,\Lambda_k;P_k]$ ($k\in A$) be the connected Rees components of $S$, where  $P_k$ is the $\Lambda_k\times I_k$ submatrix of $P$.
Then  $\langle E(S) \rangle$ is isomorphic to the 0-direct union of the semigroups $E_k=\langle E(S_k) \rangle$, and since each $P_k$ is regular it is a simple exercise to show that $E_k = \mcal{M}^0[\{1\};I_k,\Lambda_k;P_k]$. By \cite[Corollary 4.9]{Quinncat} $\langle E(S) \rangle$ is $\aleph_0$-categorical if and only if each $E_k$  is $\aleph_0$-categorical and $\{E_k: k\in A\}$ is finite, up to isomorphism. By Proposition \ref{G and gamma cat} each $\Gamma(P_k)$ is $\aleph_0$-categorical, and by Theorem~\ref{iso c0s} $\mcal{C}(\Gamma(P))=\{\Gamma(P_k):k\in A\}$ is finite, up to isomorphism. Hence $\Gamma(P)$ is $\aleph_0$-categorical by Proposition \ref{bg iff cc}. 
 
 (3) $\Rightarrow$ (4) Immediate from Corollary \ref{finite entries}. 
 
 (4) $\Rightarrow$ (1) The elements of the combinatorial Rees matrix semigroup $T=\mcal{M}^0[\{1\};I,\Lambda;P]$  can be identified\footnote{Semigroups of this form are known as \textit{rectangular 0-bands.}} with the set $(I\times \Lambda)\cup \{0\}$. Since $\aleph_0$-categoricity is preserved by finite direct products \cite{Grzeg}, the semigroup $U=G \times T$ is $\aleph_0$-categorical. 
 The set $I=\{(g,0):g\in G\}$ is an ideal of $U$, and the Rees quotient $U/I$ is a principal factor of $U$. Hence $U/I$ is $\aleph_0$-categorical by \cite[Theorem 3.12]{Quinncat}. 
 Moreover, the map $\phi\colon U/I\rightarrow S$ given by $0\phi=0$ and $(g,(i,\lambda))\phi = (i,g,\lambda)$ ($g\in G, i\in I,\lambda\in \Lambda$) is an isomorphism, to which the result follows.   
\end{proof}

Furthermore, since complete bipartite graphs are $\aleph_0$-categorical by Theorem \ref{BG CAT LIST}, a disjoint union of complete bipartite graphs is $\aleph_0$-categorical if and only if it has finitely many connected components, up to isomorphism, by Proposition \ref{bg iff cc}. 
The corollary above thus reduces in the orthodox case as follows.   

\begin{corollary}\label{orthod rees} Let $S=\mcal{M}^0[G;I,\Lambda;P]$ be an orthodox Rees matrix semigroup. Then the following are equivalent: 
\begin{enumerate}
\item[(1)] $S$ is $\aleph_0$-categorical;
\item[(2)] $G$ and $E(S)$ are $\aleph_0$-categorical; 
\item[(3)] $G$ is  $\aleph_0$-categorical  and $\Gamma(P)$ has finitely many connected components, up to isomorphism; 
\item[(4)] $G$ and $\mcal{M}^0[\{1\};I,\Lambda;P]$ are $\aleph_0$-categorical. 
\end{enumerate}  
\end{corollary} 


In \cite{Quinncat} we studied inverse completely 0-simple semigroups, that is, Brandt semigroups. These are necessarily orthodox, and are isomorphic to a Rees matrix semigroup of the form $\mcal{M}^0[G;I,I;P]$ where $P$ is the identity matrix, that is, $p_{ii}=1$ and $p_{ij}=0$ for each $i\neq j$ in $I$, and are denoted $\mcal{B}^0[G;I]$. 
  Since the induced biparite graph of a Brandt semigroup is a perfect matching, it is $\aleph_0$-categorical by Theorem \ref{BG CAT LIST}. 
  Corollary \ref{orthod rees} then simplifies to obtain our classification of $\aleph_0$-categorical Brandt semigroups \cite[Theorem 4.2]{Quinncat}, which states that a Brandt semigroup over a group $G$ is $\aleph_0$-categorical if and only if $G$ is $\aleph_0$-categorical. 

We are now able to construct a simple counterexample to the converse of Proposition \ref{label cat rees}. Let $G=\{g_i:i\in \mathbb{N}\}$ be an infinite $\aleph_0$-categorical group.
 Let 
\[ S=\mcal{M}^0[G;\mathbb{N},\mathbb{N};P]=\mcal{B}^0[G;\mathbb{N}] \text{ and } T=\mcal{M}^0[G;\mathbb{N},\mathbb{N};Q],
\]  where $Q=(q_{i,j})$ is such that $q_{i,i}=g_i$ and $q_{i,j}=0$ for each $i\neq j$. 
Then $\Gamma(P)=\Gamma(Q)$ (and are isomorphic to $P_{\mathbb{N}}$) and  $(1_G,1_{\Gamma(P)},(g^{-1}_i)_{i\in \mathbb{N}},(1)_{\lambda \in \mathbb{N}})$ is an isomorphism from $S$ to $T$ by Theorem \ref{iso c0s} since 
\[ p_{i,i}1_G=1=g_ig^{-1}_i =  1 \cdot q_{i,i} \cdot g^{-1}_i, 
\] 
for each $i\in \mathbb{N}$. Since $S$ is $\aleph_0$-categorical by the $\aleph_0$-categoricity of $G$, the same is true of $T$.
 However, $\Gamma(Q)^l$ is a $G$-labeling, and is thus not $\aleph_0$-categorical by Lemma \ref{label finite}. Hence  $T$ is our desired counterexample. 
 
 \subsection{Alternative directions} 
 
To further incorporate the link between the induced bipartite graph of a Rees matrix semigroup and the entries of the sandwich matrix, we  could instead introduce the stronger notion of an \textit{induced group labeled bipartite graph}. 
A group labeled bipartite graph is a $G$-labeled bipartite graph $\Gamma^f=(\langle L,R,E \rangle,G,f)$, for some group $G$, where an automorphism of $\Gamma^f$ is a pair $(\psi,\theta)\in \text{Aut}(\Gamma)\times  \text{Aut}(G)$  such that, for each $\ell \in L, r\in R$, 
\[ (\ell,r)f=g \Leftrightarrow (\ell\psi,r\psi)f=g\theta.  
\]  
However, group labeled biparite graphs do not appear to be first-order structures. 

Let $S=\mcal{M}^0[G;I,\Lambda;P]$ be such that $G(P)$ forms a subgroup of $G$. 
Then we may define the \textit{induced group labeled bipartite graph} of $S$ as the $G(P)$-labeled bipartite graph $\Gamma(P)^f$, with automorphisms being pairs $(\psi,\theta)\in \text{Aut}(\Gamma) \times \text{Aut}(G(P))$ such that  $p_{\lambda\psi,i\psi}=p_{\lambda,i}\theta$ for each $i\in I, \lambda\in \Lambda$. 
Notice that if $(\psi,\theta)$ is an automorphism of the induced group labeled bipartite graph of $S$ and is such that $\theta$ extends to an automorphism $\theta'$ of $G$, then $(\theta',\psi,(1)_{i\in I},(1)_{\lambda\in \Lambda})$ is clearly an  automorphism of $S$. 
However, we do not in general obtain all automorphisms of $S$ in this way. Similar problems therefore arise in regard to when $\aleph_0$-categoricity of $S$ passes to its induced group labeled bipartite graph (by which we mean the induced group labeled bipartite graph has an oligomorphic automorphism group).

An alternative next step could be to extend the scope of this section by considering  the $\aleph_0$-categoricity of Rees matrix semigroups over semigroups (or monoids), denoted $\mcal{M}^0[S;I,\Lambda;P]$, where again we assume $P$ is regular. 
Similarly we may define $\mcal{M}[S;I,\Lambda;P]$.  
 However, this task is as difficult as considering the $\aleph_0$-categoricity of all semigroups. 
 Indeed, if $S$ is a semigroup then $T=\mcal{M}^0[S^1;\{i\},\{\lambda\};(1)]$ is isomorphic to $S$ with both a zero and an identity adjoined, and by    \cite[Corollary 2.12]{Quinncat} $S$ is $\aleph_0$-categorical if and only if $T$ is $\aleph_0$-categorical.
 A second problem that arises is that the vital  Theorem \ref{iso c0s} only holds in the forwards direction for Rees matrix semigroups over semigroups. 
 As such we do not have an explicit description of the automorphism group of $\mcal{M}^0[S;I,\Lambda;P]$ via its components, and  many of the proofs of this section do not seem to be easily extendable. 
 In fact  the $\aleph_0$-categoricity of a Rees matrix semigroup over a semigroup $S$ does not necessarily pass to $S$, unlike for groups as shown in Proposition \ref{G and gamma cat}.
   For example, take any semigroup $S$  with zero element $\epsilon$, and consider $M=\mcal{M}[S;\{i\},\{\lambda\};(\epsilon)]$. Then $M$ is isomorphic to a null semigroup with zero element $(i,\epsilon,\lambda)$, which is $\aleph_0$-categorical by  \cite[Example 2.7]{Quinncat}; taking $S$ to be non $\aleph_0$-categorical gives our desired example. On the other hand, it can be easily shown that Proposition \ref{label cat rees} can be extended to Rees matrix semigroups over monoids. This allows us to build chains of $\aleph_0$-categorical semigroups as follows. Let $M$ be an $\aleph_0$-categorical monoid, and let $P$ be a $\Lambda\times I$ matrix over $\{0,1\}$ in which $\Gamma(P)$ is $\aleph_0$-categorical. Take $M_1=\mcal{M}^0[M;I,\Lambda;P]$, and inductively define $M_k=\mcal{M}^0[M_{k-1}^1;I,\Lambda;P]$ for $k>1$. Then each $M_k$ is $\aleph_0$-categorical, and $M_{k-1}$ embeds into $M_k$, for each $k\in \mathbb{N}$. 
   
\section*{acknowledgements}
The author  would   like to thank the referee for their valuable comments, which resulted in the vast improvement of Section 5. The suggestions to include greater detail about the completely simple case, and to give the interesting alternative proof of Lemma \ref{lemm:new} using Rees quotients were particularly useful.
Additionally the author would like to thank Prof. Victoria Gould for all her help at every stage of creating  this manuscript.

\end{document}